\numberwithin{equation}{section}
\newtheorem{thm}{Theorem}[section]
\newtheorem{lemma}{Lemma}[section]
\newtheorem{definition}{Definition}[section]
\newtheorem{rem}{Remark}[section]
\newtheorem{cor}{Corollary}[section]
\newcommand{\Lemma}[1]{Lemma~\ref{#1}}
\newcommand{\Theorem}[1]{Theorem~\ref{#1}}
\newcommand{\Corollary}[1]{Corollary~\ref{#1}}
\newcommand{\Remark}[1]{Remark~\ref{#1}}
\newcommand{\Definition}[1]{Definition~\ref{#1}}
\newcommand{\Section}[1]{Section~\ref{#1}}
\newcommand{\Figure}[1]{Fig.~\ref{#1}}
\newcommand{\closure}[1]{\overline{#1}}
\newcommand{\dH}[1]{\mathrm{d}\mathcal{H}^{#1}}
\newcommand{\dL}[1]{\mathrm{d}\mathcal{L}^{#1}}
\newcommand{\mL}[1]{\mathcal{L}^{#1}}
\newcommand{\mH}[1]{\mathcal{H}^{#1}}
\newcommand{\bv}[1]{\mathbf{#1}}
\newcommand{\moll}[1]{{\rho}_{#1}}
\newcommand{\mollHalf}[1]{\rho^+_{#1}}
\newcommand{\mollD}[2]{\rho^{(#1)}_{#2}}
\newcommand{\mollDHalf}[2]{\rho^{(#1+)}_{#2}}
\newcommand{\mollDTHalf}[2]{\tau^{(#1+)}_{#2}}
\newcommand{\conv}[2]{\left(#1*#2\right)}
\newcommand{\convS}[2]{#1*#2}
\newcommand{\convDret}[4]{#1\ast_{#3,#4}#2}
\newcommand{\chgval}[0]{\Psi_\Omega}
\newcommand{\commutaor}[2]{r_\eta(#1,#2)}
\newcommand{\halfspace}[1]{\mathbb{R}^{#1}_+}
\newcommand{\divergence}[1]{\operatorname{div}#1}
\begin{document}
\title{On existence and uniqueness for transport equations with non-smooth velocity fields under inhomogeneous Dirichlet data}
\author{Tokuhiro Eto\thanks{Graduate School of Mathematical Sciences, The University of Tokyo, Komaba 3-8-1, Meguro, Tokyo 153-8914, Japan. E-mail:tokuhiro.eto@gmail.com} \and Yoshikazu Giga \thanks{Graduate School of Mathematical Sciences, The University of Tokyo, Komaba 3-8-1, Meguro, Tokyo 153-8914, Japan. E-mail:labgiga@ms.u-tokyo.ac.jp}}
\date{}

\maketitle
\thispagestyle{empty}
\begin{abstract}
A transport equation with a non-smooth velocity field is considered under inhomogeneous Dirichlet boundary conditions.
The spatial gradient of the velocity field is assumed in $L^{p'}$ in space and the divergence of the velocity field is assumed to be bounded.
By introducing a suitable notion of solutions, it is shown that there exists a unique renormalized weak solution for $L^p$ initial and boundary data for $1/p+1/p'=1$.
Our theory is considered as a natural extension of the theory due to DiPerna and Lions (1989), where there is no boundary.
Although a smooth domain is considered, it is allowed to be unbounded. A key step is a mollification of a solution.
In our theory, mollification in the direction normal to the boundary is tailored to approximate the boundary data.
\end{abstract}
{\small
\textbf{Keywords} - DiPerna--Lions,\ transport equation,\ renormalized solution,\ inhomogeneous Dirichlet data\\
\textbf{MSC 2020} - 35Q49, 35A01, 35A02, 35F16
}
\section{Introduction} \label{SD}
We consider a transport equation with non-smooth vector field $\bv{b}$ under inhomogeneous Dirichlet data in a smooth domain $\Omega$ in $\mathbb{R}^d$ ($d\ge2$) which is possibly unbounded. Namely, we consider
\begin{align}
	u_t + \bv{b}\cdot\nabla u &= 0 \quad\mbox{in}\quad \Omega\times(0,T), \label{ETr} \\
	u &= h \quad\mbox{on}\quad \partial\Omega\times(0,T), \label{EDir} \\
	u(\cdot,0) &= u_0 \quad\mbox{on}\quad \Omega, \label{EInit}
\end{align}
where $T>0$ is a fixed time horizon.
The velocity field $\bv{b}:\Omega\times(0,T)\to\mathbb{R}^d$ is given, but it may not be Lipschitz in space.
It is assumed to be in $L^1\left(0,T;W^{1,p'}(\Omega,\mathbb{R}^d)\right)$.
The function $h:\partial\Omega\times(0,T)\to\mathbb{R}$ is the boundary data, and the function $u_0:\Omega\to\mathbb{R}$ is the initial data.
We only assume that $u_0\in L^p(\Omega)$ and $h\in L^\infty\left(0,T;L^p(\partial\Omega)\right)$ for $1/p+1/p'=1$, $1\le p<\infty$.
We expect that the solution $u$ is only in $L^\infty\left(0,T;L^p(\Omega)\right)$, which is a weak solution.
It is impossible to take traces of $u$ to the boundary and also at $t=0$.
We begin with a suitable notion of a weak solution to \eqref{ETr}, \eqref{EDir}, and \eqref{EInit}. 

\begin{definition} \label{DDir}
{
	Let $h\in L^\infty(0,T;L^p(\partial\Omega))$, $u_0\in L^p(\Omega)$, and $\bv{b}\in L^1(0,T;W^{1,p'}(\Omega;\mathbb{R}^d))$ with $1\leq p < \infty$,
	where $p'$ denotes the conjugate number of $p$, say $1/p + 1/p' = 1$. Then, a}
function $u\in L^\infty\left(0,T;L^p(\Omega)\right)$ is said to be a weak solution of \eqref{ETr}, \eqref{EDir} and \eqref{EInit} provided that
\begin{multline*}
	-\int_0^T\int_\Omega u\partial_t\varphi\,\dL{d}dt - \int_\Omega u_0\varphi(\cdot,0)\,\dL{d} \\ + \int_0^T\int_{\partial\Omega}h(\bv{b}\cdot\nu_\Omega)\varphi\,{\dH{d-1}}dt - \int_0^T\int_\Omega u\divergence{(\varphi\bv{b})}\,\dL{d}dt = 0
\end{multline*}
holds for all $\varphi\in C_0^1\left(\closure{\Omega}\times[0,T)\right)$.
Here, $\mL{d}$ and $\mH{d-1}$ respectively denote the $d$-dimensional Lebesgue measure and the $(d-1)$-dimensional Hausdorff measure;
$\nu_\Omega$ denotes the outward unit normal vector field on $\partial\Omega$.
It is easily seen by the integration by parts that a classical solution to \eqref{ETr}, \eqref{EDir} and \eqref{EInit} is a weak solution in the sense of \Definition{DDir}.
\end{definition}

We note that the trace of $\bv{b}$ on $\partial\Omega$ belongs to $W^{1-\frac{1}{p'},p'}(\partial\Omega)$ which is continuously embedded into $L^r(\partial\Omega)$
with $r = (d-1)p'/(d-p')$ by the Sobolev embedding theorem.
Hence, the assumption that $h(\cdot,t)\in L^p(\partial\Omega)$ can be relaxed to $h(\cdot,t)\in L^s(\partial\Omega)$ with $s := \max\{1, p(d-1)/d\}$ to define the boundary integral in \Definition{DDir}.
However, for simplicity, we suppose that $h(\cdot,t)\in L^p(\partial\Omega)$ throughout this paper.

In this paper, we establish a reasonable notion of a weak solution to the transport equation with inhomogeneous Dirichlet boundary conditions in regular domains \eqref{ETr}, \eqref{EDir} and \eqref{EInit}.
By reasonable, we mean that the weak solution is unique regardless of the boundary data on the place where the velocity vector field goes outside.
Our aim is to extend DiPerna--Lions' theory to the case when an ambient space for the problem is specified and the Dirichlet boundary condition is inhomogeneous.
Let us state our main result:
\begin{thm}[{Uniqueness}] \label{PRDir}
Assume that $\Omega$ {is a $C^3$ domain (not necessarily bounded) in $\mathbb{R}^d$ and is uniformly-$C^2$.}
Let $1\leq p < \infty${, $h\in L^\infty(0,T;L^p(\partial\Omega))$, $u_0\in L^p(\Omega)$} and $\bv{b}\in L^1(0,T;W^{1,{p'}}(\Omega;\mathbb{R}^d))$ with $\divergence{\bv{b}}\in L^\infty(\Omega\times(0,T))$.
Then, the problem \eqref{ETr}, \eqref{EDir} and \eqref{EInit} has at most one weak solution (irrelevant to the value of $h$ on the place where $\bv{b}\cdot\nu_\Omega\geq 0$) in the class $L^\infty(0,T;L^p(\Omega))$.
\end{thm}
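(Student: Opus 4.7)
By linearity of \eqref{ETr}--\eqref{EInit} in $u$, $u_0$, and $h$, it suffices to show that if $u\in L^\infty(0,T;L^p(\Omega))$ is a weak solution with $u_0\equiv 0$ and $h\equiv 0$ on the inflow set $\{\bv{b}\cdot\nu_\Omega<0\}$, then $u\equiv 0$. The value of $h$ on the outflow is irrelevant, exactly as in the statement. The heart of the argument is a renormalization property: for a sufficiently rich class of $\beta\in C^1(\mathbb{R})$ with $\beta(0)=0$, the composition $\beta(u)$ is itself a weak solution of \eqref{ETr}--\eqref{EInit} in the sense of \Definition{DDir}, with initial datum $\beta(u_0)\equiv 0$ and boundary datum $\beta(h)$.

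To establish the renormalization property I would adapt the DiPerna--Lions mollification scheme to the boundary. After straightening $\partial\Omega$ locally by a $C^3$ chart---available uniformly because $\Omega$ is uniformly-$C^2$---the ambient space is locally the half-space $\halfspace{d}$, and I would convolve $u$ with a family $\mollHalf{\eta}$ of anisotropic kernels of width $\eta$: standard symmetric mollifiers in the $d-1$ tangential directions, combined with an asymmetric kernel in the normal direction shifted so that (i) $u^\eta:=\conv{u}{\mollHalf{\eta}}$ is well-defined on $\closure{\Omega}$ and (ii) the boundary integral in the weak equation satisfied by $u^\eta$ reproduces $h$ in the limit $\eta\downarrow 0$. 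This is precisely the tailoring flagged in the abstract. The mollified function satisfies
\begin{equation*}
\partial_t u^\eta+\bv{b}\cdot\nabla u^\eta=\commutaor{u}{\bv{b}}
\end{equation*}
classically in the interior, and Lions' commutator estimate---which transfers to this anisotropic convolution since $u\in L^\infty_tL^p_x$ and $\nabla\bv{b}\in L^1_tL^{p'}_x$---gives $\commutaor{u}{\bv{b}}\to 0$ in $L^1_{\mathrm{loc}}$. Multiplying by $\beta'(u^\eta)$, applying the classical chain rule, pairing with $\varphi\in C_0^1(\closure{\Omega}\times[0,T))$, and sending $\eta\to 0$ then delivers the weak formulation of the renormalized equation with boundary datum $\beta(h)$ and initial datum $\beta(u_0)$.

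Once renormalization is available, pick $\beta\in C^1(\mathbb{R})$ non-negative with $\beta(0)=0$, $\beta'$ bounded, and $\beta(s)\le C|s|^p$ (for instance a smoothed truncation of $|s|^p$), so that $\beta(u)\in L^\infty_tL^1_x$. Testing the renormalized formulation against $\varphi(x,t)=\chi(t)\psi_R(x)$, with $\chi\in C_c^1([0,T))$ non-negative and $\psi_R\in C_c^1(\closure{\Omega})$ a spatial cutoff increasing to $1$, and using $\divergence{\bv{b}}\in L^\infty$ together with $\bv{b}\in W^{1,p'}$ to control the cutoff error as $R\to\infty$, one obtains
\begin{equation*}
F'(t)\le\|\divergence{\bv{b}}\|_{L^\infty}F(t)-\int_{\partial\Omega}\beta(h)(\bv{b}\cdot\nu_\Omega)\,\dH{d-1},\qquad F(t):=\int_\Omega\beta(u(\cdot,t))\,\dL{d}.
\end{equation*}
The boundary integral is non-negative, since $\beta(h)(\bv{b}\cdot\nu_\Omega)\ge 0$ on the outflow (by $\beta\ge 0$ and $\bv{b}\cdot\nu_\Omega\ge 0$) and vanishes on the inflow (by $h\equiv 0$). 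Gronwall with $F(0)=0$ then forces $F\equiv 0$, and since $\beta$ vanishes only at $0$, we conclude $u\equiv 0$ almost everywhere.

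The main obstacle is the renormalization step, and specifically the construction of the boundary-tailored mollifier $\mollHalf{\eta}$ together with the proof that the boundary integral produced by $u^\eta$ actually tends to $\int\beta(h)(\bv{b}\cdot\nu_\Omega)\varphi\,\dH{d-1}dt$. Without this tailoring one only recovers the classical interior renormalization, which cannot distinguish inflow from outflow data at $\partial\Omega$. A secondary technical issue is the unboundedness of $\Omega$: handling the cutoff $\psi_R$ requires showing that $\int\beta(u)\bv{b}\cdot\nabla\psi_R\,\dL{d}\to 0$ as $R\to\infty$, which relies on the Sobolev integrability of $\bv{b}$ and a cutoff compatible with the uniform $C^2$ geometry of $\partial\Omega$.
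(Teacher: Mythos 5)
Your overall strategy coincides with the paper's: establish a relabeling (renormalization) property, choose a nonnegative $C^1$ truncation $\beta$ of $|s|^p$ with $\beta(0)=0$ and $\beta'$ bounded, derive a Gronwall inequality for $F(t)=\int_\Omega\beta(u(\cdot,t))\,\dL{d}$ in which the outflow boundary contribution has the favorable sign, and conclude $u\equiv 0$. The closing Gronwall stage is the same as the paper's (the paper sends $\eta\to 0$, then $M\to\infty$ in a family $\theta_{\eta,M}$ rather than fixing a single $\beta$; your spatial cutoff $\psi_R$ is a sensible way to make the constant-in-space test function rigorous for unbounded $\Omega$). So the route is not genuinely different.

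The gap is in the renormalization step, which you flag as the main obstacle but do not resolve: you assert that the trace of $u^\eta$ on $\partial\Omega$, weighted by $\bv{b}\cdot\nu_\Omega$, reproduces $h(\bv{b}\cdot\nu_\Omega)$ as $\eta\to 0$, but the mollification alone cannot deliver this. The trace of $\conv{u}{\mollDHalf{d}{\eta}}$ on $\partial\Omega$ is an average of \emph{interior} values of $u$, and a general $u\in L^\infty(0,T;L^p(\Omega))$ has no a priori trace matching $h$; the commutator estimate is purely interior and says nothing about the boundary. The paper's device (\Lemma{LMoll}, \Lemma{LMoll2}, \Lemma{LMoll3}) is to feed the constructed test function $\Phi(y,s)=\int_0^\infty\conv{\varphi}{\mollDHalf{d}{\eta}}(y,t)\mollHalf{\eta}(t-s)\,dt$ into the weak formulation of \Definition{DDir}, then invoke the \emph{interchanging property of the commutator} (\Lemma{lem:inchg} for solenoidal $\bv{b}$, \Lemma{lem:inchg2} in general) to move $\commutaor{\cdot}{\bv{b}}$ from $u$ onto $\varphi$, and to restrict attention to test functions constant in the normal direction in a boundary collar (\Remark{rem:spec}). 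Comparing with the identity obtained by testing the mollified PDE against the same $\varphi$ then yields the \emph{exact} formula $u_\eta(\bv{b}\cdot\nu_\Omega)=\convS{(h(\bv{b}\cdot\nu_\Omega))}{\mollDTHalf{d}{\eta}}$ on $\partial\Omega$, from which the required $L^1$ convergence follows. This symmetry of the one-sided kernel under the adjoint convolution is exactly what fails for the shifted-ball mollification of Blouza--Le~Dret (\Remark{rem:bdry2}), so it is not a free byproduct of any ``boundary-tailored mollification''; without this lemma your renormalization step does not close.
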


For the definition of that $\Omega$ is uniformly-$C^2$, we refer the reader to Bolkart and Giga \cite[\S 2]{BG}. Roughly speaking,
the boundary $\partial\Omega$ is locally the graph of a $C^2$ function whose second derivatives are uniformly bounded.
It is sufficient for ensuring this to assume that $\partial\Omega$ has $C^{2,1}$ regularity.
We note that if $\Omega$ is bounded and {$C^2$},
then it {is uniformly-$C^2$}.

{
	Meanwhile, the existence of a weak solution to \eqref{ETr}, \eqref{EDir} and \eqref{EInit} follows from a standard Gronwall-type argument;
	for the reader's convenience, we will prove this in \Section{sec:exst}:
	\begin{thm}[Existence]\label{thm:exst_2}
		Assume that $\Omega$ is a smooth domain in $\mathbb{R}^d$ (not necessarily bounded),
		and that $\divergence{\bv{b}}\in L^\infty(\Omega\times(0,T))$ and $\bv{b}\in L^\infty(\Omega\times(0,T))\cap L^1(0,T;W^{1,p'}(\Omega;\mathbb{R}^d))$.
		Then, there exists a weak solution $u\in L^\infty(0,T;L^p(\Omega))$ to \eqref{ETr}, \eqref{EDir} and \eqref{EInit}.
	\end{thm}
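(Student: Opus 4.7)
The plan is to regularize the data so as to reduce to a classical transport problem solvable by characteristics, establish a Gronwall-type $L^p$ estimate that is uniform in the regularization, and then pass to the weak-$*$ limit using that the formulation of \Definition{DDir} is linear in $u$.

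First, I would produce sequences $\bv{b}_n$, $u_{0,n}$, and $h_n$ of smooth approximations of $\bv{b}$, $u_0$, and $h$, respectively, obtained by mollification (after a bounded linear extension to $\mathbb{R}^d$ compatible with the uniformly-$C^2$ structure of $\partial\Omega$) combined with cutoff functions to handle the possible unboundedness of $\Omega$. I would arrange that $\bv{b}_n\to\bv{b}$ in $L^1(0,T;W^{1,p'}(\Omega;\mathbb{R}^d))$, $u_{0,n}\to u_0$ in $L^p(\Omega)$, $h_n\to h$ in $L^p(\partial\Omega\times(0,T))$, and, crucially, the uniform bounds $\|\bv{b}_n\|_{L^\infty}\le C\|\bv{b}\|_{L^\infty}$ and $\|\divergence{\bv{b}_n}\|_{L^\infty}\le C\|\divergence{\bv{b}}\|_{L^\infty}$ hold. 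For such smooth data the problem admits a classical $C^1$ solution $u_n$ built by tracing characteristics of $\bv{b}_n$ backwards from $(x,t)$ until they either reach $t=0$ (where $u_{0,n}$ is read off) or exit through $\partial\Omega$ at an intermediate time (where $h_n$ is read off).

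Second, I would establish a uniform $L^\infty(0,T;L^p(\Omega))$ bound on $\{u_n\}$. Since $u_n$ is smooth, the chain rule gives the pointwise identity $\partial_t|u_n|^p+\divergence{(|u_n|^p\bv{b}_n)}=|u_n|^p\divergence{\bv{b}_n}$; integrating over $\Omega$, applying the divergence theorem, and using $u_n=h_n$ on the boundary yields
\begin{equation*}
\frac{d}{dt}\int_\Omega|u_n|^p\,\dL{d}=\int_\Omega|u_n|^p\divergence{\bv{b}_n}\,\dL{d}-\int_{\partial\Omega}|h_n|^p\,\bv{b}_n\cdot\nu_\Omega\,\dH{d-1}.
\end{equation*}
Majorizing the boundary integral by $\|\bv{b}\|_{L^\infty}\|h_n(\cdot,t)\|_{L^p(\partial\Omega)}^p$ and applying Gronwall's inequality then provides a bound on $\|u_n\|_{L^\infty(0,T;L^p(\Omega))}$ depending only on $\|u_0\|_{L^p(\Omega)}$, $\|h\|_{L^\infty(0,T;L^p(\partial\Omega))}$, and the $L^\infty$ norms of $\bv{b}$ and $\divergence{\bv{b}}$. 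For $1<p<\infty$ the Banach--Alaoglu theorem, applied with predual $L^1(0,T;L^{p'}(\Omega))$, extracts a weak-$*$ convergent subsequence with limit $u\in L^\infty(0,T;L^p(\Omega))$.

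Finally, I would pass to the limit in the weak formulation, noting that each of its terms is linear in the unknown: the initial-data term uses strong $L^p$ convergence of $u_{0,n}$; the boundary integrand uses Sobolev trace theory to obtain $\bv{b}_n\to\bv{b}$ in $L^1(0,T;L^{p'}(\partial\Omega))$ combined with $L^p$ convergence of $h_n$; and the volume term pairs the weakly-$*$ convergent $u_n$ against $\divergence{(\varphi\bv{b}_n)}\to\divergence{(\varphi\bv{b})}$ strongly in $L^1(0,T;L^{p'}(\Omega))$. The case $p=1$ I would handle separately by first proving the theorem for data in $L^1\cap L^2$ via the $p=2$ argument above, and then extending to general $L^1$ data through the linearity of the a priori estimate, which makes the solution operator continuous from the data space to the appropriate weak topology. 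The principal technical obstacle is arranging the smooth approximations on a possibly unbounded uniformly-$C^2$ domain while preserving the $L^\infty$ bounds on $\bv{b}$ and $\divergence{\bv{b}}$; this is delicate near $\partial\Omega$ and requires either a carefully chosen bounded extension to $\mathbb{R}^d$ or a tubular-neighbourhood construction in boundary charts, while every remaining step is a routine Gronwall-type argument.
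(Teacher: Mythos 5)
Your proposal follows essentially the same route as the paper's proof: mollify the data, obtain a classical solution of the regularized problem, establish the Gronwall-type $L^p$ estimate via the identity $\frac{d}{dt}\|u_n\|_p^p = \int_\Omega |u_n|^p \divergence{\bv{b}_n}\,\dL{d} - \int_{\partial\Omega}|h_n|^p\,\bv{b}_n\cdot\nu_\Omega\,\dH{d-1}$, and pass to a weak limit in the linear weak formulation. Your separate treatment of $p=1$ is a sound instinct that the paper's proof elides (bounded sequences in $L^1$ are not weakly sequentially compact without equi-integrability), though the sketched density argument would need \Theorem{PRDir} to make the solution operator well-defined and some additional care to keep the limit in $L^\infty(0,T;L^1(\Omega))$.
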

}

Our research is inspired by the theory proposed by DiPerna and Lions in 1989 \cite{DiPernaLions1989}.
This theory showed that a distributional solution to the transport equation in the Euclidean spaces is a \textit{renormalized solution},
and this leads to the existence and the uniqueness of the weak solution together with its stability.
We refer the reader for an introduction of DiPerna--Lions' theory to the book by Giga and the second author \cite[\S 2]{GigaGiga2024}.

The regularity assumptions on a weak solution and an associated velocity vector field are important to establish the uniqueness of a weak solution.
Indeed, it has been drawing the attention of researchers in what function spaces a weak solution and a velocity vector field should be sought to ensure the uniqueness of a weak solution.
We briefly summarize the previous results on the non-uniqueness of a weak solution to the transport equation.
Modena and Sz\'ekelyhidi \cite[Collorary 1.3]{Modena2018} showed that the uniqueness of a weak solution to the transport equation fails
in the solution space $C(0,T;L^p(\mathbb{T}^d))$ if $\bv{b}$ belongs to $C(0,T;W^{1,q}(\mathbb{T}^d;\mathbb{R}^d))$ where
$\mathbb{T}^d := \prod_{i=1}^{d}(\mathbb{R}/\omega_i\mathbb{Z})\,(\omega_i > 0,\, 1\leq i\leq d)$ denotes the $d$-dimensional torus, and $p$, $q$ and $d$ satisfy
\begin{equation*}
	\frac{1}{p}+\frac{1}{q} >1+\frac{1}{d-1},\quad p>1,\quad \mbox{and}\quad d\geq 3.
\end{equation*}
This result was improved by Modena and Sattig \cite{ModenaSattig2020}, and it was shown that the uniqueness fails if
\begin{equation}\label{eq:modena}
	\frac{1}{p}+\frac{1}{q}>1+\frac{1}{d},\quad p>1,\quad \mbox{and}\quad d\geq 2.
\end{equation}
Meanwhile, Bru\'e, Colombo, and De Lellis \cite{Brue2021} proved that there are possibly two positive solutions to the transport equation if $p$, $q$ and $d$ satisfy the {condition \eqref{eq:modena}} as in \cite{ModenaSattig2020}.
Cheskidov and Luo \cite[Theorem 1.3]{CheskidovLuo2021} investigated the critical threshold of $1/p+1/q$ to guarantee the uniqueness result by DiPerna--Lions,
and they showed that the uniqueness of a weak solution to the transport equation fails in the solution space $L^1(0,T;L^p(\mathbb{T}^d))$ as soon as
\begin{equation*}
	\frac{1}{p} + \frac{1}{q} > 1,\quad p>1,\quad \mbox{and}\quad d\geq 3
\end{equation*}
for some velocity vector field $\bv{b}\in L^1(0,T;W^{1,q}(\mathbb{T}^d;\mathbb{R}^d))\cap L^\infty(0,T;L^{p'}(\mathbb{T}^d;\mathbb{R}^d))$.
Let us summarize these results. The non-uniqueness of a weak solution to the transport equation has been shown in the solution space $L^\infty(0,T;L^p(\mathbb{T}^d))$ in the case when $1/p+1/q > 1+1/d$.
(Note that $C(0,T;L^p(\mathbb{T}^d))\subset L^\infty(0,T;L^p(\mathbb{T}^d))$, and hence the non-uniqueness result in \eqref{eq:modena} implies that in $L^\infty(0,T;L^p(\mathbb{T}^d))$.)
Meanwhile, extending the solution space to $L^1(0,T;L^p(\mathbb{T}^d))$ leads to the non-uniqueness result even if $1/p+1/q > 1$.
However, it is still an open problem whether we can deduce such a result if
\begin{equation*}
	\frac{1}{p} + \frac{1}{q} = 1\qquad\mbox{in}\quad L^1(0,T;L^p(\mathbb{T}^d)).
\end{equation*}
The same authors \cite[Theorem 1.2]{CheskidovLuo2024} recently showed that the scaling {condition on} $p$ and $q$ is irrelevant to prove the non-uniqueness of a weak solution to the transport equation.
Precisely speaking, they showed that there exists a velocity vector field $\bv{b}\in \bigcap_{p<\infty}L^1(0,T;W^{1,p}(\mathbb{T}^d;\mathbb{R}^d))$ such that
{a solution is not unique in} the function space
\begin{equation*}
	\mathcal{F}_{\bv{b}} :=\left\{u\in \bigcap_{\substack{p < \infty\\k\in\mathbb{N}}}L^p(0,T;C^k(\mathbb{T}^d))\biggm| u\bv{b}\in L^1(\mathbb{T}^d\times(0,T);\mathbb{R}^d)\right\}{.}
\end{equation*}
A method of convex integration is invoked to construct several weak solutions with the same data.

In the Lagrangian framework, it is well-known that the initial-value problem \eqref{ETr} and \eqref{EInit} amounts to finding a flow map $\mathbf{X}:\mathbb{T}^d\times[0,T)\to\mathbb{T}^d$ satisfying
\begin{equation}\label{eq:Lflow}
	\begin{cases}
		\partial_t{\mathbf{X}}(x,t) = \bv{b}(\mathbf{X}(x,t),t)&\quad\mbox{for}\quad (x,t)\in\mathbb{T}^d\times(0,T),\\
		\mathbf{X}(x,0) = x&\quad\mbox{for}\quad x\in\mathbb{T}^d.
	\end{cases}
\end{equation}
In this direction, Crippa and De Lellis \cite{Crippa2008} showed the well-posedness of the initial-value problem \eqref{eq:Lflow} for velocity fields $\bv{b}\in C(0,1;W^{1,r}(\mathbb{T}^d;\mathbb{R}^d))$ with $r > d$.
Meanwhile, Pitcho and Sorella \cite[Theorem 1.3]{Pitcho2023} showed that
for every $r\in [1,d)$ and $s<\infty$, there exists a divergence-free velocity field $\bv{b}\in C(0,1;W^{1,r}(\mathbb{T}^d;\mathbb{R}^d)\cap L^s(\mathbb{T}^d;\mathbb{R}^d))$
for which it is possible to find any finite number of integral curves starting from a.e. $x\in \mathbb{T}^d$ at $t = 0$.

For weaker regularity of the velocity field than Sobolev spaces, we refer to the work by Ambrosio \cite{Ambrosio2004}.
Therein, the velocity field was merely of locally bounded variation, 
and a measure-theoretic solution to the transport equation was shown to be a renormalized solution together with its existence and uniqueness.
However, it was not clear what kind of ambient spaces were used in this work, and hence the boundary condition and its effect were not treated.

For bounded domain problems, Tenan \cite{Tenan2021} studied the transport equation to extend DiPerna--Lions' theory so that it can work in bounded domains,
and the existence and uniqueness of a weak solution together with a stability result were shown.
However, the Dirichlet boundary condition was not treated in this work because the velocity field was assumed to equal zero on the boundary.
In contrast to this work, we {impose} inhomogeneous Dirichlet boundary conditions to the transport equation, and the velocity field is allowed to be nonzero on the boundary.
Moreover, our approach allows the domain to be unbounded, and hence the setting is more general than that in \cite{Tenan2021}.
Scott and Pollock \cite{ScottPollock2022} established the existence and the uniqueness of a weak solution
to the transport equation of vector-valued densities with the inflow boundary condition in {bounded} Lipschitz domains satisfying the Bernard condition.
Roughly speaking, the Bernard condition is a condition that the boundary can be split into two parts,
and one of which is the inflow boundary and the other is the complementary part.
Moreover, the intersection of these two parts is required to be the finite union of $(d-2)$-dimensional Lipschitz surfaces.
Here, by the inflow boundary condition, we mean that the velocity field {$\bv{b}$} goes inside the domain on the boundary.
To show the well-posedness of the problem, they assumed that the velocity field is solenoidal and that
the boundary data can be extended to the whole domain so that it is in some admissible function space.
{These hypotheses} are more restrictive than our {problem setting},
and hence their results are not directly applicable to our problem.

A key step to prove \Theorem{PRDir} is checking an invariance property of a weak solution, which is {the} so-called \textit{relabeling lemma}:
\begin{lemma}[Relabeling lemma] \label{LIn}
Suppose that $\Omega${, $h$ and $u_0$ satisfy} the hypotheses of \Theorem{PRDir} { and $\bv{b}\in L^1(0,T;W^{1,p'}(\Omega;\mathbb{R}^d))$}.
Assume that $u$ is a weak solution to \eqref{ETr}, \eqref{EDir}, and \eqref{EInit}.
Then, $\theta(u)$ is a weak solution to \eqref{ETr}, \eqref{EDir}, and \eqref{EInit}
with the boundary condition {$\theta(u)=\theta(h)$} on $\partial\Omega\times(0,T)$ and the initial condition $\theta(u)(\cdot, 0) = \theta(u_0)$ in $\Omega$
whenever $\theta\in C^1(\mathbb{R})$ with $\theta'\in L^\infty(\mathbb{R})$.
\end{lemma}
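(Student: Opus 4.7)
The plan is to adapt DiPerna--Lions' renormalization scheme to the inhomogeneous Dirichlet setting. The four main steps are (i) construct a boundary-adapted mollifier, (ii) derive a mollified PDE that incorporates the boundary datum, (iii) apply the classical chain rule to the mollified equation, and (iv) pass to the limit using a commutator estimate.

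First I would fix a small $\eta>0$ and mollify $u$ by a family $\moll{\eta}$ tailored to the boundary. Near $\partial\Omega$ I would flatten the boundary by a local $C^2$ diffeomorphism (available since $\Omega$ is uniformly-$C^2$) and use a mollifier that is one-sided in the normal direction, so that the convolution at a point $x\in\Omega$ only samples values from the inward side and can be naturally combined with the boundary datum $h$. Away from $\partial\Omega$ I would take a standard isotropic mollifier and patch the two via a partition of unity. Crucially, in the boundary layer I would splice $u$ together with an extension of $h$ across $\partial\Omega$, so that the mollified object $u_\eta$ automatically satisfies $u_\eta|_{\partial\Omega}\approx h_\eta$ for a suitably mollified $h_\eta$, and $u_\eta(\cdot,0)\approx u_{0,\eta}$.

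Next, I would test the weak formulation of \eqref{ETr}--\eqref{EInit} against $\varphi(y,t)=\moll{\eta}(x-y)\psi(t)$ for $x\in\Omega$ and a smooth time cutoff $\psi$; rearranging the initial and boundary contributions yields
\begin{equation*}
\partial_t u_\eta + \bv{b}\cdot\nabla u_\eta = \commutaor{\bv{b}}{u} \quad\text{in } \Omega\times(0,T),
\end{equation*}
with initial trace $u_{0,\eta}$ and boundary trace $h_\eta$, where $\commutaor{\bv{b}}{u} := \bv{b}\cdot\nabla u_\eta - (\bv{b}\cdot\nabla u)_\eta$ is the familiar commutator. Since $u_\eta$ is smooth in space, the classical chain rule gives
\begin{equation*}
\partial_t \theta(u_\eta) + \bv{b}\cdot\nabla \theta(u_\eta) = \theta'(u_\eta)\,\commutaor{\bv{b}}{u}.
\end{equation*}
Multiplying by $\varphi\in C^1_0(\closure{\Omega}\times[0,T))$ and integrating by parts recovers the weak form of \Definition{DDir} for $\theta(u_\eta)$ with data $\theta(h_\eta)$ and $\theta(u_{0,\eta})$, plus an error $\int_0^T\!\!\int_\Omega \theta'(u_\eta)\,\commutaor{\bv{b}}{u}\,\varphi\,\dL{d}dt$.

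Finally I pass $\eta\downarrow 0$. Using $u\in L^\infty(0,T;L^p(\Omega))$, $\bv{b}\in L^1(0,T;W^{1,p'}(\Omega;\mathbb{R}^d))$, and $\divergence{\bv{b}}\in L^\infty$, the DiPerna--Lions commutator lemma adapted to the boundary-tailored mollifier gives $\commutaor{\bv{b}}{u}\to 0$ in $L^1_{\mathrm{loc}}(\Omega\times(0,T))$. Since $|\theta'(u_\eta)|\le\|\theta'\|_{L^\infty}$, the error term vanishes in the limit. The remaining linear terms pass via strong convergence $u_\eta\to u$ in $L^p_{\mathrm{loc}}$, $u_{0,\eta}\to u_0$ in $L^p(\Omega)$, and $h_\eta\to h$ in $L^p(\partial\Omega)$, together with continuity of $\theta$ on bounded sets and the growth bound $|\theta(s)|\le|\theta(0)|+\|\theta'\|_{L^\infty}|s|$. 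This yields the weak formulation for $\theta(u)$ with data $\theta(h)$ and $\theta(u_0)$. The main obstacle is the commutator estimate near $\partial\Omega$: the one-sided, anisotropic mollifier breaks the symmetry $\nabla_x\moll{\eta}=-\nabla_y\moll{\eta}$ that powers the interior argument, and one must carefully combine the flattening diffeomorphism, the directional scaling of $\moll{\eta}$, and the matching of $u$ with the extension of $h$ across the boundary to recover the cancellations and the right powers of $\eta$. The uniform-$C^2$ regularity of $\Omega$ is exactly what keeps the constants in these local estimates uniform in the reference boundary point, which is essential since $\Omega$ may be unbounded.
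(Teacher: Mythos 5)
Your proposal shares some of the paper's ingredients — a one-sided mollifier in the normal direction, a commutator estimate, and the chain rule applied to the mollified equation — but the mechanism by which the boundary datum $h$ enters is wrong, and this is exactly the crux of the paper's argument. You propose to ``splice $u$ together with an extension of $h$ across $\partial\Omega$'' so that $u_\eta|_{\partial\Omega}\approx h_\eta$ by construction. This is internally inconsistent with your own choice of a one-sided mollifier that only samples from the interior: such a mollifier never reads the spliced values outside $\Omega$, so the splice is invisible and $u_\eta|_{\partial\Omega}$ would simply be an interior average of $u$ near the boundary, with no built-in relation to $h$. If instead you used a two-sided mollifier to actually see the outward splice, the spliced function is not a weak solution of the transport equation across $\partial\Omega$ (there is a jump there), and the commutator would not vanish as $\eta\to 0$; the DiPerna--Lions cancellation fails on the interface.

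The paper's actual route is different and is the heart of the matter: $u_\eta$ is \emph{only} the mollification of $u$ (with no splicing), and the identity
\begin{equation*}
u_\eta\,(\bv{b}\cdot\nu_\Omega) = \left(h\,(\bv{b}\cdot\nu_\Omega)\right)\ast\tau^{(d+)}_\eta
\qquad\text{a.e. on }\partial\Omega\times(0,T)
\end{equation*}
is \emph{derived}, not imposed, by a three-step argument: (i) plug the mollified test function $\Phi=\conv{\varphi}{\mollDHalf{d}{\eta}}$ (also mollified in time) into the weak formulation of $u$, picking up the boundary integral involving $h$; (ii) test the mollified PDE $\partial_t u_\eta+\bv{b}\cdot\nabla u_\eta=-(\text{commutator})$ with a test function $\varphi$ chosen \emph{constant in the normal direction} near $\partial\Omega$, which is essential to extract the boundary trace cleanly; and (iii) cancel the two commutator integrals against each other using the interchanging property $\int r_\eta(u,\bv{b})\varphi = \int r_\eta(\varphi,\bv{b})u$ (\Lemma{lem:inchg} / \Lemma{lem:inchg2}). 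None of these three ingredients appears in your proposal. Relatedly, you identify the loss of the symmetry $\rho(z)=\rho(-z)$ as the main obstacle, but the commutator $L^\alpha$-estimate survives the asymmetric mollifier without trouble (it only needs $\nabla\rho$ bounded); the genuinely delicate point is the interchanging property and the normal-direction choice of test function, which is what makes the boundary datum appear with the correct weight $\bv{b}\cdot\nu_\Omega$. You also omit the time mollification of $u$, which the paper points out (\Remark{rem:DPL}) is needed — unlike in the no-boundary case — for $u_\eta$ to be a classical solution of the approximate equation. Finally, the convergence $r_\eta\to 0$ is obtained in $L^\alpha(\Omega)$ (with $1/\alpha=1/p+1/\beta$), not merely $L^1_{\mathrm{loc}}$, and the global integrability matters since $\Omega$ may be unbounded.
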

A solution having such a property is called a renormalized solution.
Multiple solutions constructed by a method of convex integration are not renormalized solutions.
It breaks the chain rule of differentiation as clarified in \cite{Ambrosio2004} and also in \cite{Tsuruhashi2021}.

Admitting the above lemma,
it is easy and standard to show the uniqueness (\Theorem{PRDir}) provided that $\divergence{\bv{b}}$ is bounded.
We will give the proof in \Section{sec:exst}.
However, in contrast to DiPerna and Lions \cite{DiPernaLions1989}, we do not include the case $p = \infty$ in \Theorem{PRDir}
since we need to argue by duality (see e.g., \cite[\S 2.3]{GigaGiga2024}), and hence the proof is more involved.

Let us explain the strategy of the proof of \Lemma{LIn}.
As a starting point, we construct an approximate equation of \eqref{ETr}, \eqref{EDir}, and \eqref{EInit} in terms of mollification.
To this end, we mollify a Lebesgue function $u$ so that it is smooth and write it as $u_\eta$.
However, the standard Friedrichs mollification is not suitable for approximating the Dirichlet boundary condition.
Indeed, if the ambient space is the half space, then the approximation by the standard mollification yields the half value of the Dirichlet data on the boundary (see e.g., \cite[Lemma 3.12]{GigaGiga2024}).
To cope with this issue, we introduce a mollifier $\mollHalf{\eta}$ tailored to the half line based on the moving average (see \Figure{fig:sm} and \Figure{fig:hm}).

    \begin{figure}[H]
        \centering
		\begin{minipage}{0.48\textwidth}
        	\includegraphics[keepaspectratio,width=80mm]{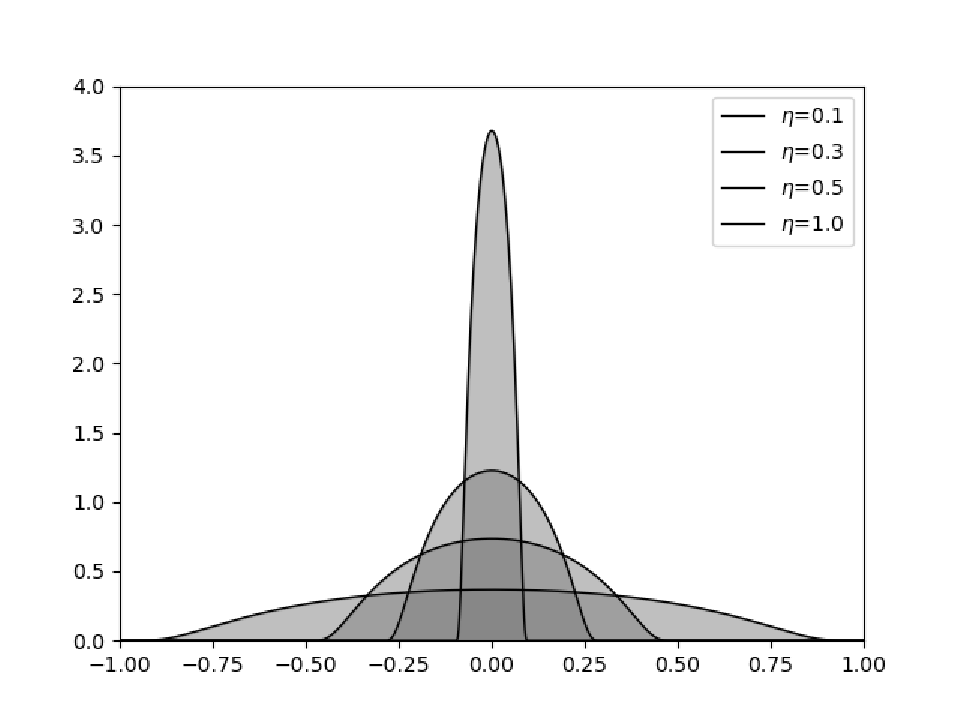}
        	\caption{The standard mollifier}\label{fig:sm}
		\end{minipage}
		\begin{minipage}{0.48\textwidth}
        	\includegraphics[keepaspectratio,width=80mm]{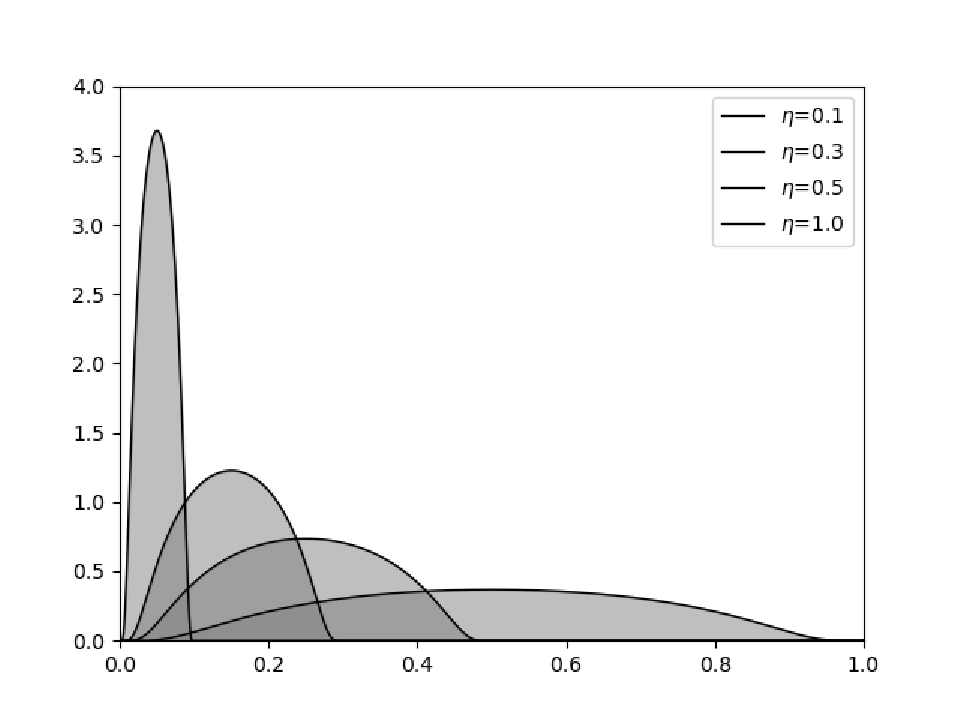}
        	\caption{The tailored mollifier}\label{fig:hm}
		\end{minipage}
    \end{figure}
Namely, while the solution $u$ is approximated by the standard mollification in the direction of the tangential space,
the Dirichlet boundary condition is approximated by the mollification in the direction of the half line.
The function $u_\eta$ should be smooth in the time variable as well, and hence this tailored mollification is also applied to the time variable.
Then, the approximate solution $u_\eta$ can be represented as follows:
\begin{equation}\label{eq:intro_1}
	u_\eta(x,t) = \int_0^\infty \conv{u(\cdot,s)}{\mollDHalf{d}{\eta}}(x)\mollHalf{\eta}(s-t)\,ds,
\end{equation}
where the convolution is operated with respect to the spatial variable $x\in\mathbb{R}^d$.
Though the integration with respect to the time variable can be rephrased as the convolution as well,
we make a point of writing the time variable explicitly to avoid confusion.
For more detail about the construction of the approximate solution, see \Section{sec:approx}.

Using the approximate solution $u_\eta$, we obtain the following approximate equation:
\begin{equation}\label{eq:intro_2}
	(u_\eta)_t + \bv{b}\cdot\nabla u_\eta = -\int_0^\infty \commutaor{u}{\bv{b}}(x,s)\mollHalf{\eta}(s-t)\,ds\qquad\mbox{in}\quad \Omega\times(0,T),
\end{equation}
where $\commutaor{u}{\bv{b}}$ is the residual term of the transport equation and is defined as follows:
\begin{equation}\label{eq:intro_3}
	\commutaor{u}{\bv{b}} := \convS{(\bv{b}\cdot\nabla u)}{\mollDHalf{d}{\eta}} - \bv{b}\cdot\nabla\conv{u}{\mollDHalf{d}{\eta}}.
\end{equation}
Here, {the gradient} $\nabla u$ should be understood as a distributional derivative of $u$ since $u$ is merely in $L^\infty(0,T;L^p(\Omega))$.
The quantity $\commutaor{u}{\bv{b}}$ is typically called the \textit{commutator} of $u$ and $\bv{b}$ in the literature.
To ensure that \eqref{eq:intro_2} approximates the equation \eqref{ETr},
we need to show that the commutator $\commutaor{u}{\bv{b}}$ converges to zero as $\eta\to 0$ in some topology at least in bulk.
For this purpose, we establish a DiPerna--Lions-type lemma for the commutator $\commutaor{u}{\bv{b}}$ (\Lemma{lem:comm}).
It is necessary to check whether the function $u_\eta$ is a good approximation of $u$ in the weak sense, say \Definition{DDir}.
In particular, we need to show that the approximate solution $u_\eta$ satisfies some approximate boundary conditions in the weak sense.
To this end, we invoke the definition of the weak solution and the approximate equation \eqref{eq:intro_2}
to derive an explicit form of the approximate boundary condition. Likewise, an approximate initial condition is also derived.
In the course of the proof, {it} is crucial {to choose test functions which are independent of the normal direction to the boundary $\partial\Omega$} (\Lemma{LMoll} {and \Remark{rem:spec}}).
We {shall first discuss} the half-space cases and then extend the argument to the general cases, namely $\Omega$ is assumed to be a regular domain.
In this case, the regularity of the boundary is important to ensure that the previous argument works well in a tubular neighborhood of the boundary.
In other words, we utilize the normal coordinate system near the boundary and obtain the same result as in the half-space cases (\Lemma{LMoll2}).
It remains to show the convergence of the commutator $\commutaor{u}{\bv{b}}$ as $\eta\to 0$.
Since we are working in the general domain, we need to consider the boundary effect.
However, the tailored mollification $\mollDHalf{d}{\eta}$ allows us to neglect the boundary integral term in the commutator thanks to its definition (see \Section{sec:comm}).
Therefore, we can follow the proof of \cite[Lemma 2.6]{GigaGiga2024} (\Lemma{lem:comm}).
Thanks to the explicit forms of the approximate boundary condition and the approximate initial condition,
we have the strong convergence of the approximate boundary condition and the approximate initial condition of $\theta(u_\eta)$ for
any smooth relabeling function $\theta$ with bounded derivative. This argument is the proof of \Lemma{LIn} that we will describe in \Section{sec:rl}.

The argument {mentioned above} works only for solenoidal velocity fields $\bv{b}$ (except for the convergence of the commutator)
since we have invoked the interchanging property of the commutator (\Lemma{lem:inchg}) which is true only for solenoidal velocity fields.
In fact, it turns out that this assumption can be removed by extending the interchanging property of the commutator so that it applies to
general velocity fields $\bv{b}$ which are not necessarily solenoidal (\Lemma{lem:inchg2}).
Using this property, the strong convergences of the approximate boundary condition and the approximate initial condition are shown in \Lemma{LMoll}
can be proved for general velocity fields $\bv{b}$ (\Lemma{LMoll3}).

We will demonstrate the existence of a weak solution to \eqref{ETr}, \eqref{EDir}, and \eqref{EInit} {in two different sets of hypotheses and} strategies.
{In both sets, w}e approximate the velocity field $\bv{b}$, the Dirichlet data $h$ and the initial data $u_0$ using the mollifiers that we have introduced so far,
and we write these {approximate functions} as $\bv{b}_\eta$, $h_\eta$ and $u_{0,\eta}$.

The first approach is to carry out a Gronwall-type argument to find a weakly convergent subsequence from a sequence of classical solutions {(\Theorem{thm:exst_2})}.
To derive a Gronwall-type inequality for the $L^p$ norm of a smooth approximate solution $u_\eta$,
we further assume that the velocity field $\bv{b}$ is essentially bounded {in addition to the hypotheses of the uniqueness result (\Theorem{PRDir})}.

The second approach is adopting the relabeling lemma (\Lemma{LIn}).
We renormalize the classical solution $u_\eta$ to \eqref{ETr}, \eqref{EDir} and \eqref{EInit} with $\bv{b} = \bv{b}_\eta$, $h = h_\eta$ and $u_0 = u_{0,\eta}$.
We choose a renormalizing function $\theta$ so that $\theta(u_\eta)$ is uniformly bounded in $L^\infty(0,T;L^p(\Omega))$ with respect to $\eta$.
Then, we extract a subsequence of $\theta(u_\eta)$ which is weakly convergent in $L^p(\Omega)$ topology.
The weak limit of this sequence will be pulled back to be identified as a weak solution to the original problem.
This strategy works {only} for smooth bounded domains $\Omega$, bounded Dirichlet data $h$ and bounded initial data $u_0$ ({\Corollary{cor:exst_1}}).

This paper is organized as follows.
In \Section{sec:approx}, we will introduce a mollification in the direction of the half space and will approximate a weak solution to \eqref{ETr}, \eqref{EDir} and \eqref{EInit} by this mollification.
In \Section{sec:rl}, we will prove the relabeling lemma (\Lemma{LIn}) using a commutator estimate (\Lemma{lem:comm}).
Here, we will {begin} with the case when the domain is the half space to give a clear explanation, and will discuss the general case.
In \Section{sec:dvgf}, we will extend the results so far to the case when the velocity field is not necessarily solenoidal.
In \Section{sec:comm}, we will prove the convergence of the commutator (\Lemma{lem:comm}).
As a conclusion of this paper, we show the uniqueness of a weak solution (\Theorem{PRDir}) using the relabeling lemma, and we demonstrate
two types of existence results of a weak solution (\Theorem{thm:exst_2} and {\Corollary{cor:exst_1}}) in \Section{sec:exst}.

\section{Approximation by mollifiers}\label{sec:approx}
	We let $\rho(x)$ be a function which satisfies
	\begin{equation*}
		\int_{\mathbb{R}}\rho(x)\,dx = 1,\quad\rho(x)\geq 0,\quad\rho(x) = \rho(-x),\quad\mbox{and}\quad \operatorname{supp}\rho\,\mbox{ is compact},
	\end{equation*}
	where $\operatorname{supp}\rho$ denotes the support of $\rho$, say the closure of the set where $\rho$ is nonzero.
	We define a standard mollifier $\rho_\eta$ in $\mathbb{R}$ by $\rho_\eta(x) := (1/\eta)\rho(x/\eta)$.
	Then, a mollifier $\rho_\eta^{(d-1)}$ in $\mathbb{R}^{d-1}$
	can be defined by $\rho^{(d-1)}_\eta(x) = \prod_{i=1}^{d-1}\rho_\eta(x_i)$ for $x = (x_1,\cdots,x_{d-1})$, recursively.
	Subsequently, we will approximate a weak solution to \eqref{ETr}, \eqref{EDir} and \eqref{EInit} in terms of mollifiers.
	To cope with the boundary condition properly, we introduce another mollifier replacing the standard mollifier
	for the Dirichlet boundary problems. Namely, let $\mollHalf{}\in C^\infty(\mathbb{R})$ be a function satisfying 
	\begin{equation*}
		\int_{0}^1\mollHalf{}(x)\,dx = 1,\quad\mollHalf{}(x)\geq 0,\quad \operatorname{supp}\mollHalf{} = [0,1].
	\end{equation*}
	For instance, a possible choice of $\mollHalf{}(x)$ is
	\begin{equation*}
		\mollHalf{}(x) := \begin{cases}
			C{\operatorname{exp}{\left(\frac{1}{4\left(x - \frac{1}{2}\right)^2 -1}\right)}}\qquad&\mbox{for}\quad x\in(0,1),\\
			0&\mbox{otherwise},
		\end{cases}
	\end{equation*}
	where a positive constant $C$ is selected so that the integration of $\mollHalf{}$ over $\mathbb{R}$ equals one.
	Then, we introduce a mollifier $\mollDHalf{d}{\eta}$ in $\mathbb{R}^d_+\,(:=\mathbb{R}^{d-1}\times\halfspace{}\quad\mbox{with}\quad\halfspace{} := (0,\infty))$ by $\mollDHalf{d}{\eta}(x) := \mollD{d-1}{\eta}(x')\mollHalf{\eta}(-x_d)$ for $x = (x',x_d)\in\mathbb{R}^d$
	with $\mollHalf{\eta}(x) := (1/\eta)\mollHalf{}(x/\eta)$ for $x\in\mathbb{R}$.
	See again \Figure{fig:hm} for the graph of $\mollHalf{\eta}(x)$ for several choices of $\eta > 0$.
	In particular, we set $\mollD{0}{}\equiv 1$, and hence $\mollDHalf{1}{} = \mollHalf{}$ for the case $d = 1$.

	We now define an approximate solution $u_\eta$ by
	\begin{equation}\label{def:moll}
		u_\eta(x, t) := \int_0^\infty\int_{\mathbb{R}^{d-1}}\int_{0}^\infty u(y',y_d,s)\mollD{d-1}{\eta}(x'-y')\mollHalf{\eta}(y_d-x_d)\mollHalf{\eta}(s-t)\,\dH{d-1}(y')dy_dds
	\end{equation}
	for $u\in L^p(\mathbb{R}^d_+\times(0,T))$, $x = (x',x_d)\in\mathbb{R}^d_+$ and $t\in(0,T)$.
	We note that $\mollHalf{\eta}(y_d - x_d) \equiv 0$ for $y_d \leq x_d$ and $\mollHalf{\eta}(s-t)\equiv 0$ for $s\leq t$ by its definition.
	Thus, the integral intervals of \eqref{def:moll} with respect to $y_d$ and $s$ make sense {without extending $u$ to $y_d < 0$ and $s < 0$}.
	\begin{rem}\label{rem:DPL}
		In \cite{DiPernaLions1989}, the function $u$ was not mollified in the time variable. However, we have noticed that
		the mollification in the time variable is necessary to guarantee that $u_\eta$ approximately solves the transport equation as stated in \Lemma{lem:approx} (see also \cite[Eq.(19)]{DiPernaLions1989}).
	\end{rem}
	In the sequel, we will write the convolution with respect to $x'\in\mathbb{R}^{d-1}$ and $x_d\in\mathbb{R}_+$ in $u_\eta$ as $u * \mollDHalf{d}{\eta}$,
	and then $u_\eta$ can be represented as
	\begin{equation}\label{def:moll2}
		u_\eta(x,t) = \int_0^\infty \conv{u(\cdot,s)}{\mollDHalf{d}{\eta}}(x)\mollHalf{\eta}(s-t)\,ds.
	\end{equation}
	Meanwhile, the convolutions $\convS{u}{\mollD{d-1}{\eta}}$ denotes the standard mollification in $\mathbb{R}^{d-1}$.
	Namely, we define
	\begin{equation*}
		\conv{u}{\mollD{d-1}{\eta}}(x) := \int_{\mathbb{R}^{d-1}}u(y)\mollD{d-1}{\eta}(x-y)\,\dH{d-1}(y),
	\end{equation*}
	where we use the convention that $\mollD{0}{\eta} = \moll{\eta}$.
	For later use, we also introduce a mollifier $\mollDTHalf{d}{\eta}$ with respect to $x'\in\mathbb{R}^{d-1}$ and $t\in\mathbb{R}$ defined by
	$\mollDTHalf{d}{\eta}(x',t) := \mollD{d-1}{\eta}(x')\mollHalf{\eta}(-t)$. Then, $u_\eta$ has another representation as follows (compare with \eqref{def:moll2}):
	\begin{equation}\label{def:moll3}
		u_\eta(x',x_d,t) = \int_0^\infty\conv{u(\cdot,x_d,\cdot)}{\mollDTHalf{d}{\eta}}(x',y_d,t)\mollHalf{\eta}(y_d-x_d)\,dy_d.
	\end{equation}

	As a conclusion of this section, we obtain an approximate transport equation \eqref{ETr} using the mollification $u_\eta$ defined by \eqref{def:moll2}:
	\begin{lemma}\label{lem:approx}
		Assume that {$\bv{b}\in L^1(0,T;W^{1,p'}(\halfspace{d};\mathbb{R}^d))$}.
		For $u\in L^\infty(0,T;L^p(\mathbb{R}^d_+))$, the function $u_\eta$ defined by \eqref{def:moll2} is a classical solution of
		\begin{equation}\label{eq:approx}
			(u_\eta)_t + \bv{b}\cdot\nabla u_\eta = -\int_0^\infty \commutaor{u}{\bv{b}}(x,s)\mollHalf{\eta}(s-t)\,ds\qquad\mbox{in}\quad \mathbb{R}^d_+\times(0,T),
		\end{equation}
		where
		\begin{equation}\label{def:comm}
			\commutaor{u}{\bv{b}} := \convS{(\bv{b}\cdot\nabla u)}{\mollDHalf{d}{\eta}} - \bv{b}\cdot\nabla\conv{u}{\mollDHalf{d}{\eta}}.
		\end{equation}
	\end{lemma}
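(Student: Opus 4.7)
The plan is to read off \eqref{eq:approx} by testing the weak formulation of \Definition{DDir} against a test function concentrated at the evaluation point $(x,t)$, and then computing the four resulting integrals separately. For $(x,t)\in\halfspace{d}\times(0,T-\eta)$ with $x_d>0$, I take
$$\varphi(y,\tau):=\mollDHalf{d}{\eta}(x-y)\mollHalf{\eta}(\tau-t).$$
Because $\mollHalf{\eta}$ is supported in $[0,\eta]$, the $y$-support sits in a compact subset of $\closure{\halfspace{d}}$ (with $y_d\in[x_d,x_d+\eta]$) and the $\tau$-support is $[t,t+\eta]\subset(0,T)$, so $\varphi\in C_0^1(\closure{\halfspace{d}}\times[0,T))$ is admissible; the one-sided support of $\mollHalf{\eta}$ is what makes this possible all the way down to the boundary and the initial slab.

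Next, I would evaluate the four terms of \Definition{DDir} individually. Using $\partial_\tau\mollHalf{\eta}(\tau-t)=\mollHalf{\eta}'(\tau-t)$ (the derivative with respect to the integration variable $\tau$, of opposite sign to the derivative with respect to the parameter $t$) together with Fubini's theorem, the time term becomes
$$-\int_0^T\!\!\int_{\halfspace{d}}u\,\partial_\tau\varphi\,dy\,d\tau=-\int_0^\infty(u*\mollDHalf{d}{\eta})(x,\tau)\mollHalf{\eta}'(\tau-t)\,d\tau=(u_\eta)_t(x,t).$$
The initial-data integral vanishes because $\mollHalf{\eta}(-t)=0$ for $t>0$, and the boundary integral on $\{y_d=0\}$ vanishes because $\mollHalf{\eta}(0-x_d)=0$ for $x_d>0$. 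For the flux term, I would expand $\operatorname{div}(\varphi\bv{b})=\nabla_y\varphi\cdot\bv{b}+\varphi\operatorname{div}\bv{b}$, use $\nabla_y[\mollDHalf{d}{\eta}(x-y)]=-\nabla_x[\mollDHalf{d}{\eta}(x-y)]$ to shift the derivative outside the $y$-integral, and then invoke the distributional identity $\bv{b}\cdot\nabla u=\operatorname{div}(u\bv{b})-u\operatorname{div}\bv{b}$ together with the commutativity of spatial convolution with divergence to obtain
$$-\int_0^T\!\!\int_{\halfspace{d}}u\,\operatorname{div}(\varphi\bv{b})\,dy\,d\tau=\int_0^\infty\mollHalf{\eta}(\tau-t)\,\bigl[(\bv{b}\cdot\nabla u)*\mollDHalf{d}{\eta}\bigr](x,\tau)\,d\tau.$$

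Setting the four contributions to sum to zero yields $(u_\eta)_t(x,t)=-\int_0^\infty\mollHalf{\eta}(\tau-t)[(\bv{b}\cdot\nabla u)*\mollDHalf{d}{\eta}](x,\tau)\,d\tau$. The claimed identity \eqref{eq:approx} then follows by substituting the commutator decomposition $(\bv{b}\cdot\nabla u)*\mollDHalf{d}{\eta}=\commutaor{u}{\bv{b}}+\bv{b}\cdot\nabla\conv{u}{\mollDHalf{d}{\eta}}$ given by \eqref{def:comm}, and recognizing the resulting term $\int_0^\infty\mollHalf{\eta}(\tau-t)\bv{b}(x,\tau)\cdot\nabla(u*\mollDHalf{d}{\eta})(x,\tau)\,d\tau$ as the convective term $\bv{b}\cdot\nabla u_\eta$ associated with $u_\eta$. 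The principal obstacle is ensuring the simultaneous admissibility of $\varphi$ and the vanishing of both the initial and boundary traces for a concentrated test function; this is precisely why the asymmetric half-line mollifier $\mollHalf{\eta}$ must be used in the normal and temporal directions. A symmetric mollifier would pick up half-values of $h$ and $u_0$ within $\eta$ of the boundary or of $t=0$ and contaminate the local identity. Careful sign tracking for the time derivative of $\mollHalf{\eta}(\tau-t)$ is a secondary but easy-to-miss pitfall.
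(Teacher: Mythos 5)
Your proof takes essentially the same route as the paper: insert the concentrated test function $\varphi(y,\tau)=\mollDHalf{d}{\eta}(x-y)\mollHalf{\eta}(\tau-t)$ into \Definition{DDir}, exploit the one-sided support of $\mollHalf{\eta}$ to annihilate the initial and boundary traces at interior $(x,t)$, and decompose the convolved advection via the commutator. Your four-term bookkeeping is just a reorganization of the paper's computation of $(u_\eta)_t$ and $\bv{b}\cdot\nabla u_\eta$ followed by the same test-function insertion, so the approaches are not genuinely different.

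There is, however, a gap in your final step---and you have made visible the place where the paper elides it. Your derivation yields
\[
(u_\eta)_t(x,t)=-\int_0^\infty\mollHalf{\eta}(\tau-t)\commutaor{u}{\bv{b}}(x,\tau)\,d\tau-\int_0^\infty\mollHalf{\eta}(\tau-t)\,\bv{b}(x,\tau)\cdot\nabla\conv{u(\cdot,\tau)}{\mollDHalf{d}{\eta}}(x)\,d\tau,
\]
and you ``recognize'' the second integral as $\bv{b}(x,t)\cdot\nabla u_\eta(x,t)$. But in that integral $\bv{b}$ is evaluated at the integration time $\tau$, whereas the classical product $\bv{b}\cdot\nabla u_\eta$ carries $\bv{b}(x,t)$; they differ by
\[
\int_0^\infty\mollHalf{\eta}(\tau-t)\bigl(\bv{b}(x,\tau)-\bv{b}(x,t)\bigr)\cdot\nabla\conv{u(\cdot,\tau)}{\mollDHalf{d}{\eta}}(x)\,d\tau,
\]
which is generically nonzero for a time-dependent $\bv{b}\in L^1(0,T;W^{1,p'})$. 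Since $\commutaor{u}{\bv{b}}(x,\tau)$ in \eqref{def:comm} is a purely spatial object with both $u$ and $\bv{b}$ frozen at time $\tau$ (cf.\ the remark following the proof of \Lemma{lem:comm}), it cannot absorb this temporal residual. The paper's proof contains the same silent identification in the first equality of \eqref{eq:approx_2}, so the gap is inherited rather than introduced by you; still, a complete argument should either carry this additional temporal residual on the right of \eqref{eq:approx} and show separately that it vanishes as $\eta\to0$, or interpret the advective term on the left as the time-mollified quantity rather than the classical product. As written, the ``recognition'' step is the weak link in both your write-up and the paper's.
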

	\begin{rem}\label{rem:grad_u}
		The formula \eqref{def:comm} includes the derivative of $u$, say $\nabla u$, and its meaning is not clear at first glance.
		This term is understood as a distributional derivative of $u$.
		{
			Namely, we define
			\begin{equation*}
				\left(\convS{\left(\bv{b}\cdot\nabla u\right)}{\mollDHalf{d}{\eta}}\right)(x) := - \int_{\halfspace{d}} u(y)\divergence_y{\left(\bv{b}(y)\mollDHalf{d}{\eta}(x-y)\right)\,dy}.
			\end{equation*}
		}
		For more detail, we will explain this in \Section{sec:comm}.
	\end{rem}
	\begin{proof}[\textbf{Proof of \Lemma{lem:approx}}]
		First, we note that the function $u_\eta$ is smooth in $\mathbb{R}^d_+\times(0,T)$ by its definition.
		We fix $(x,t) \in \mathbb{R}^d_+\times(0,T)$ and then take $\eta > 0$ so small that $\operatorname{supp}\mollDHalf{d}{\eta}(x-\cdot)\subset\subset\mathbb{R}^d_+$ and $\operatorname{supp}\mollHalf{\eta}(\cdot-t)\subset\subset(0,T)$.
		We implement a direct calculation to obtain
		\begin{equation}\label{eq:approx_1}
			\partial_t u_\eta(x,t) = -\int_0^\infty\int_{\halfspace{d}}u(y,s)\mollDHalf{d}{\eta}(x-y)(\mollHalf{\eta})'(s-t)\,dyds
		\end{equation}
		and
		\begin{multline}\label{eq:approx_2}
			(\bv{b}\cdot\nabla u_\eta)(x,t) = \int_0^\infty\left(\conv{\left(\bv{b}\cdot \nabla u\right)}{\mollDHalf{d}{\eta}}(x,s) - \commutaor{u}{\bv{b}}(x,s)\right)\mollHalf{\eta}(s-t)\,ds\\
			= -\int_0^\infty\int_{\halfspace{d}}u(y,s)\divergence_y{\left(\bv{b}(y,s)\mollDHalf{d}{\eta}(x-y)\right)}\mollHalf{\eta}(s-t)\,dyds - \int_0^\infty \commutaor{u}{\bv{b}}(x,s)\mollHalf{\eta}(s-t)\,ds\\
			= - \int_0^\infty\int_{\halfspace{d}}u(y,s)\left(\bv{b}(y,s)\cdot\nabla_y\mollDHalf{d}{\eta}(x-y) {+ \divergence{\bv{b}(y,s)\mollDHalf{d}{\eta}(x-y)}}\right)\mollHalf{\eta}(s-t)\,dyds\\
			- \int_0^\infty \commutaor{u}{\bv{b}}(x,s)\mollHalf{\eta}(s-t)\,ds.
		\end{multline}
		Here, we have invoked the distributional representation of $\convS{\left(\bv{b}\cdot\nabla u\right)}{\mollDHalf{d}{\eta}}$
		to obtain the second equality and the third equality, respectively.
		We add \eqref{eq:approx_1} and \eqref{eq:approx_2}, and take a test function as $\varphi(y,s) := \mollDHalf{d}{\eta}(x-y)\mollHalf{\eta}(s-t)$.
		Then, since $u$ is a weak solution to \eqref{ETr}, we obtain \eqref{eq:approx}.
		Here, we have used the fact that $\varphi\mid_{\mathbb{R}^{d-1}}\equiv 0$ and $\varphi(\cdot, 0)\equiv 0$ thanks to the choice of $\eta$.
	\end{proof}
	\begin{rem}\label{rem:comm}
		The quantity $\commutaor{u}{\bv{b}}$ is called the commutator of $u$ and $\bv{b}$.
		If $\bv{b}$ is a constant vector field, then $\commutaor{u}{\bv{b}}$ is trivially equal to zero.
		In fact, it turns out that $\commutaor{u}{\bv{b}}(\cdot,t)$ converges to zero in $L^p(\mathbb{R}^d_+)$ as $\eta\to 0$ for a.e. $t\in(0,T)$ even if $\bv{b}$ is not constant but merely in $L^1(0,T;W^{1,p'}(\mathbb{R}^d_+))$.
		We will prove this fact in \Section{sec:comm}.
		Therefore, we may regard \eqref{eq:approx} as an approximate transport equation \eqref{ETr}, and $\commutaor{u}{\bv{b}}$ can be regarded as a perturbation term due to this mollification.
	\end{rem}
	For the commutator $\commutaor{u}{\bv{b}}$, we will introduce an interchanging property that is useful for the discussion in \Section{sec:rl}:
	\begin{lemma}\label{lem:inchg}
		Assume that {$\bv{b}\in W^{1,p'}(\halfspace{d};\mathbb{R}^d)$ and} $\divergence{\bv{b}} = 0$. Then,
		for every $u\in L^p(\halfspace{d})$ {and $v\in L^\infty(\halfspace{d})$}, {we have}
		\begin{equation}\label{eq:inchg}
			\int_{\halfspace{d}}\commutaor{u}{\bv{b}}v\,dx = \int_{\halfspace{d}}\commutaor{v}{\bv{b}}u\,dx.
		\end{equation}
	\end{lemma}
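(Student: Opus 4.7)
The plan is to reduce to smooth, compactly supported $u$ and $v$ by density, and then derive the identity by Fubini together with integration by parts in a form that exploits both $\divergence\bv{b}=0$ and the one-sided support of $\mollDHalf{d}{\eta}$. Since for each fixed $\eta$ the map $u\mapsto\commutaor{u}{\bv{b}}$ is continuous from $L^p(\halfspace{d})$ into $L^1_{\mathrm{loc}}(\halfspace{d})$, both sides of \eqref{eq:inchg} depend continuously on $u\in L^p$ in the strong topology and on $v\in L^\infty$ in the weak-$*$ topology, so by the density of $C_c^\infty(\halfspace{d})$ it suffices to treat $u,v\in C_c^\infty(\halfspace{d})$.

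For such smooth $u$ I would first rewrite the commutator in kernel form. Invoking the distributional representation from \Remark{rem:grad_u}, the chain-rule identity $\nabla_y\mollDHalf{d}{\eta}(x-y)=-\nabla_x\mollDHalf{d}{\eta}(x-y)$, the support property $\operatorname{supp}\mollDHalf{d}{\eta}(x-\cdot)\subset\{y:y_d\geq x_d\}\Subset\halfspace{d}$ for every $x\in\halfspace{d}$ (which renders the $y$-integration by parts boundary-free), and finally $\divergence\bv{b}=0$, one obtains
\begin{equation*}
\commutaor{u}{\bv{b}}(x)=\int_{\halfspace{d}} u(y)\bigl(\bv{b}(y)-\bv{b}(x)\bigr)\cdot\nabla_x\mollDHalf{d}{\eta}(x-y)\,dy.
\end{equation*}

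Pairing this representation with $v$, applying Fubini, and then integrating by parts in $y$ recasts $\int v\,\commutaor{u}{\bv{b}}\,dx$ as $-\int u\,\divergence_y H\,dy$, where
\begin{equation*}
H(y):=\bv{b}(y)\int_{\halfspace{d}} v(x)\mollDHalf{d}{\eta}(x-y)\,dx-\int_{\halfspace{d}} v(x)\bv{b}(x)\mollDHalf{d}{\eta}(x-y)\,dx.
\end{equation*}
The boundary contribution at $\partial\halfspace{d}$ vanishes because the factor $\mollHalf{\eta}(-x_d)$ appearing in $\mollDHalf{d}{\eta}(x-y)|_{y_d=0}$ is identically zero for $x_d>0$, so both $x$-integrals defining $H$ vanish on $\{y_d=0\}$. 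Expanding $\divergence_y H$ using $\divergence\bv{b}=0$ and then applying \Remark{rem:grad_u} in reverse identifies $-\divergence_y H$ with $\commutaor{v}{\bv{b}}$, which yields \eqref{eq:inchg}.

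The hardest step will be the asymmetry of $\mollDHalf{d}{\eta}$ in the normal direction: since $\mollDHalf{d}{\eta}(-z)\neq\mollDHalf{d}{\eta}(z)$, the naive kernel-symmetry argument (swap $x\leftrightarrow y$) does not immediately close the identity. What rescues the exchange is precisely the boundary vanishing of the adjoint mollification $y\mapsto\int v(x)\mollDHalf{d}{\eta}(x-y)\,dx$ at $\{y_d=0\}$, which eliminates the only boundary contribution that could otherwise obstruct the $y$-integration by parts and, combined with $\divergence\bv{b}=0$, restores the desired symmetry between $u$ and $v$.
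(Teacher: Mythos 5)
Your proposal follows essentially the same route as the paper's proof (kernel representation of the commutator, Fubini, integration by parts with $\divergence\bv{b}=0$, and a final re-identification), but the last step contains a genuine gap, and the "rescue" you offer at the end does not actually close it.

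Your computations up to the definition of $H$ are correct: one does get
$\int_{\halfspace{d}}\commutaor{u}{\bv{b}}\,v\,dx = -\int_{\halfspace{d}} u\,\divergence_y H\,dy$
with boundary-free integration by parts, since $\tilde\Psi(y):=\int_{\halfspace{d}}v(x)\mollDHalf{d}{\eta}(x-y)\,dx$ vanishes for $y_d=0$. The gap is the claim that $-\divergence_y H$ ``is'' $\commutaor{v}{\bv{b}}$. Expanding $-\divergence_y H$ with $\divergence\bv{b}=0$ gives $-\bv{b}(y)\cdot\nabla_y\tilde\Psi(y)+\int_{\halfspace{d}}v(x)\bv{b}(x)\cdot\nabla_y\mollDHalf{d}{\eta}(x-y)\,dx$. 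The first piece involves $\tilde\Psi$, i.e.\ the formal \emph{adjoint} of the mollification $u\mapsto\convS{u}{\mollDHalf{d}{\eta}}$, which equals convolution with the \emph{reflected} kernel $\mollDHalf{d}{\eta}(-\cdot)=\mollD{d-1}{\eta}(\cdot)\mollHalf{\eta}(\cdot)$, not $\conv{v}{\mollDHalf{d}{\eta}}(y)=\int v(x)\mollDHalf{d}{\eta}(y-x)\,dx$. Likewise, ``\Remark{rem:grad_u} in reverse'' applied to the second piece produces $\convS{(\bv{b}\cdot\nabla v)}{\mollDHalf{d}{\eta}(-\cdot)}$, again with the reflected kernel. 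So you correctly obtain the adjoint identity $\int\commutaor{u}{\bv{b}}\,v = \int \tilde r_\eta(v,\bv{b})\,u$ where $\tilde r_\eta$ is the commutator built from $\mollDHalf{d}{\eta}(-\cdot)$; but this is not the claimed \eqref{eq:inchg}. Concretely, matching $-\divergence_y H$ to $\commutaor{v}{\bv{b}}$ would require $(\nabla\mollDHalf{d}{\eta})(x-y)=-(\nabla\mollDHalf{d}{\eta})(y-x)$ for all $x,y$, i.e.\ $\nabla\mollDHalf{d}{\eta}$ odd, which is exactly what fails because $\mollHalf{\eta}$ is one-sided.

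Your final paragraph diagnoses the asymmetry correctly but then concludes that the boundary vanishing of $\tilde\Psi$ at $\{y_d=0\}$ ``restores the desired symmetry between $u$ and $v$.'' It does not: that vanishing only makes the $y$-integration by parts boundary-free, which you already needed and used to get $-\int u\,\divergence_y H$. It has no bearing on the bulk discrepancy $\tilde\Psi\neq\conv{v}{\mollDHalf{d}{\eta}}$; for example, if $v$ is supported in $\{x_d>a\}$ then $\tilde\Psi(y',a)=0$ while $\conv{v}{\mollDHalf{d}{\eta}}(y',a)$ is generically nonzero, so the two are not even equal up to a constant. To close the proof you would need an additional argument showing why the reflected-kernel commutator and the original commutator give the same pairing against $u$ (or you would need to revise the target identity).
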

	\begin{proof}
		The proof is straightforward by the integration by parts. Indeed, we compute
		\begin{align}\label{eq:inchg_1}
			&\int_{\halfspace{d}}\conv{(\bv{b}\cdot\nabla u)}{\mollDHalf{d}{\eta}}(x)v(x)\,dx = \int_{\halfspace{d}} \left(\int_{\halfspace{d}} (\bv{b}\cdot\nabla u)(y)\mollDHalf{d}{\eta}(x-y)\,dy\right)v(x)\,dx\nonumber\\
			&= \int_{\halfspace{d}} \left(- \int_{\halfspace{d}}u(y)\divergence_y{\left(\bv{b}(y)\mollDHalf{d}{\eta}(x-y)\right)}\,dy\right)v(x)\,dx\nonumber\\
			&= \int_{\halfspace{d}} \left(- \int_{\halfspace{d}}u(y)\bv{b}(y)\cdot\nabla_y\mollDHalf{d}{\eta}(x-y)\,dy\right)v(x)\,dx\nonumber\\
			&= \int_{\halfspace{d}} \left(-\int_{\halfspace{d}}u(y)\bv{b}(y)\cdot\nabla\mollDHalf{d}{\eta}(x-y)\,dy\right)v(x)\,dx\nonumber\\
			&= -\int_{\halfspace{d}} \bv{b}(y)\cdot\nabla\left(\int_{\halfspace{d}}v(x)\mollDHalf{d}{\eta}(y-x)\,dx\right)u(y)\,dy\nonumber\\
			&= -\int_{\halfspace{d}}\left(\bv{b}\cdot\nabla\conv{v}{\mollDHalf{d}{\eta}}\right)(y)u(y)\,dy,
		\end{align}
		where $\nabla_y$ denotes the gradient with respect to $y$.
		Here, we have invoked $\divergence{\bv{b}} = 0$
		to derive the third equality and the fourth equality, respectively;
		we have used $\nabla_y\mollDHalf{d}{\eta}(x-y) = \nabla\mollDHalf{d}{\eta}(x-y)$ to obtain the fourth equality on noting that
		$\mollDHalf{d}{\eta}(x-y) = \mollD{d-1}{\eta}(y'-x')\mollHalf{\eta}(y_d - x_d)$.
		Likewise, we obtain
		\begin{equation}\label{eq:inchg_2}
			\int_{\halfspace{d}}\conv{(\bv{b}\cdot\nabla v)}{\mollDHalf{d}{\eta}}(x)u(x)\,dx = -\int_{\halfspace{d}}\left(\bv{b}\cdot\nabla\conv{u}{\mollDHalf{d}{\eta}}\right)(y)v(y)\,dy.
		\end{equation}
		Subtracting \eqref{eq:inchg_2} from \eqref{eq:inchg_1} completes the proof.
	\end{proof}

\section{Relabeling lemma}\label{sec:rl}
	In this section, we prove the relabeling lemma (\Lemma{LIn}).
	For this, we {shall prove} a convergence result of a commutator.
	In \cite[Lemma 2.7]{GigaGiga2024}, we can find a corresponding result in the case when $\Omega := \mathbb{T}^d = \prod_{i=1}^d(\mathbb{R}/\omega_i\mathbb{Z})$ for some $\omega_i > 0\, (1\leq i\leq d)$,
	{which has no boundary}.
	For a while, we will postpone the proof of the following lemma and admit this.
	This kind of lemma is typically called either the Friedrichs lemma or the Diperna--Lions lemma (see e.g., \cite[Lemma II.1]{DiPernaLions1989} and \cite[Lemma 3.1]{BlouzaDret2001}).
	\begin{lemma}\label{lem:comm}
		Let $u\in L^p(\Omega)$ and $\bv{b}\in W^{1,\beta}(\Omega;\mathbb{R}^d)$ with $\beta\geq p'$. Let $\alpha \geq 1$ be such that $1/\alpha = 1/\beta + 1/p$.
		Then, {we have}
		\begin{equation}\label{eq:comm}
			\|r_\eta(u,\bv{b})\|_{L^\alpha(\Omega)} \leq C\|u\|_{L^p(\Omega)}\|\nabla\bv{b}\|_{L^\beta(\Omega)},
		\end{equation}
		where $\commutaor{u}{\bv{b}}$ is defined by \eqref{def:comm},
		and $C$ is a positive constant independent of $\eta$;
		{t}he symbol $\nabla\bv{b}$ denotes the matrix defined by $(\nabla\bv{b})_{i,j} := \partial_{x_j}b_i$ for $1\leq i,j\leq d${.}
		For a matrix $X$ in $\mathbb{R}^{d\times d}$,  $\|X\|$ denotes the Euclidean norm in $\mathbb{R}^{d\times d}$.
		In particular, the commutator $r_\eta(u,\bv{b})$ converges to zero in $L^\alpha(\Omega)$ as $\eta\to 0$.
	\end{lemma}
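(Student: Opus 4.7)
The plan is to follow the DiPerna--Lions commutator argument, using distributional integration by parts and a change of variables to put $r_\eta(u,\bv{b})$ in the form of a bounded convolution-type operator, and then to conclude $L^\alpha$-convergence to zero by the classical density trick. The key geometric property that makes this work with the half-mollifier is that $\operatorname{supp}\mollDHalf{d}{\eta}(x-\cdot)$ lies in $\{y: y_d \ge x_d\}$ for every $x \in \halfspace{d}$, so the integrand stays strictly inside the half-space and no boundary contribution appears when we integrate by parts in $y$.

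The first step is to use the distributional definition of the convolution (Remark 2.1) together with $\nabla_x \mollDHalf{d}{\eta}(x-y) = -\nabla_y \mollDHalf{d}{\eta}(x-y)$ to derive the representation
\begin{equation*}
	r_\eta(u,\bv{b})(x) = -\int_{\halfspace{d}} u(y)\bigl(\bv{b}(x) - \bv{b}(y)\bigr)\cdot(\nabla\mollDHalf{d}{\eta})(x-y)\,dy - \bigl((u\,\operatorname{div}\bv{b}) * \mollDHalf{d}{\eta}\bigr)(x).
\end{equation*}
The second summand is bounded in $L^\alpha$ by $C\|u\|_{L^p}\|\nabla\bv{b}\|_{L^\beta}$ directly via Hölder and Young. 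For the first, I substitute $y = x - \eta z$ and use the scaling $\mollDHalf{d}{\eta}(\eta z) = \eta^{-d}\mollDHalf{d}{1}(z)$ to rewrite it as
\begin{equation*}
	-\int u(x - \eta z)\,\frac{\bv{b}(x) - \bv{b}(x - \eta z)}{\eta}\cdot(\nabla\mollDHalf{d}{1})(z)\,dz.
\end{equation*}
Then I invoke the integral identity $\bv{b}(x) - \bv{b}(x-\eta z) = \eta\int_0^1 \nabla\bv{b}(x - s\eta z)z\,ds$, which is legitimate for $W^{1,\beta}$ fields because the segment $\{x - s\eta z : s\in[0,1]\}$ remains in $\halfspace{d}$ whenever $z \in \operatorname{supp}\mollDHalf{d}{1}$ (there $z_d \le 0$). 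Applying Minkowski's integral inequality in $z$, then Hölder in $x$ with $1/\alpha = 1/p + 1/\beta$, and finally $\|f(\cdot - \eta z)\|_{L^q(\halfspace{d})} \le \|f\|_{L^q(\halfspace{d})}$ (translations by $-\eta z$ with $z_d \le 0$ map $\halfspace{d}$ into itself) yields the bound $C\|u\|_{L^p}\|\nabla\bv{b}\|_{L^\beta}\int |z||\nabla\mollDHalf{d}{1}(z)|\,dz$, establishing \eqref{eq:comm}.

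For the $L^\alpha$-convergence I would approximate $(u,\bv{b})$ strongly by smooth compactly supported $(u_k, \bv{b}_k)$ in $L^p \times W^{1,\beta}$ and exploit the uniform bound just proved on the errors $r_\eta(u-u_k, \bv{b})$ and $r_\eta(u_k, \bv{b}-\bv{b}_k)$. The remaining piece $r_\eta(u_k, \bv{b}_k)$ converges to zero pointwise, and hence in $L^\alpha$ by dominated convergence, by passing to the limit in the two-term representation: as $\eta\to 0$ the difference quotient tends to $\nabla\bv{b}_k(x)z$, and the identity $\int (\nabla\bv{b}_k(x)z)\cdot\nabla\mollDHalf{d}{1}(z)\,dz = -\operatorname{div}\bv{b}_k(x)$ (a $z$-integration by parts, with no boundary term since $\mollDHalf{d}{1}$ is compactly supported) forces the first term to limit to $u_k\operatorname{div}\bv{b}_k$, exactly cancelling the limit $-u_k\operatorname{div}\bv{b}_k$ of the second. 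The main obstacle I anticipate is extending the above from the half-space to a general uniformly-$C^2$ domain $\Omega$: one must straighten the boundary in a tubular neighborhood by a $C^2$ diffeomorphism, transport the mollifier through it, and verify that the interior-support property $\operatorname{supp}\mollDHalf{d}{\eta}(x-\cdot) \subset \Omega$ is preserved; the Jacobian perturbation then contributes a lower-order commutator absorbed by the same estimate, while far from $\partial\Omega$ the classical DiPerna--Lions argument applies verbatim.
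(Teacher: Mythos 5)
Your proof is correct and follows essentially the same route as the paper: the same two-term decomposition of $r_\eta(u,\bv{b})$ into a difference-quotient piece and a $u\,\operatorname{div}\bv{b}$ piece, the rescaled change of variables exploiting the one-sided support of $\mollDHalf{d}{\eta}$ to keep segments and translations inside $\halfspace{d}$, the integral mean-value theorem for $W^{1,\beta}$ fields, and Minkowski/H\"older/Young to get the uniform $L^\alpha$ bound. The only minor variation is the density step---the paper mollifies only $u$ and observes that for $u\in W^{1,p}$ both constituents of the commutator converge to $\bv{b}\cdot\nabla u$ in $L^\alpha$, whereas you also mollify $\bv{b}$ and compute the pointwise limit through the identity $\int(\nabla\bv{b}_k(x)z)\cdot\nabla\mollDHalf{d}{1}(z)\,dz=-\operatorname{div}\bv{b}_k(x)$---and, like the paper's proof, yours is spelled out only for the half-space, with the general domain handled via the boundary-flattening diffeomorphism.
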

\begin{rem}
	We can find a similar convergence result for a commutator in a bounded domain $\Omega$ in $\mathbb{R}^d$ in the work by Blouza and Dret \cite{BlouzaDret2001}.
	Therein, the domain $\Omega$ was assumed to be merely Lipschitz which was weaker than our assumption on $\Omega$.
	They used the uniform cone condition {on $\Omega$} to define a convolution which is a candidate of an approximate solution:
	\begin{equation}\label{eq:cone}
		{\widetilde{u_\varepsilon}(x)} := \convDret{u}{\mollD{d}{\eta(\varepsilon)}}{\varepsilon}{\bv{e}}{(x)}
		:= \int_{\mathbb{R}^d}u(y)\mollD{d}{\eta(\varepsilon)}(x-\varepsilon\bv{e} -y)\,dy\qquad\mbox{with}\quad \eta(\varepsilon) := \varepsilon\sin\left(\frac{\theta_C}{2}\right),
	\end{equation}
	where $\bv{e}$ and $\theta_C$ are respectively a unit vector and an angle that determine the cone.
	Here, $\varepsilon$ should satisfy $0<\varepsilon < h_C/(1 + \sin(\theta_C/2))$ with $h_C$ being the height of the cone.
	Then, it can be seen that $u_\varepsilon$ is well-defined up to the boundary.
	In contrast to the standard mollification, their mollification was implemented in inscribed balls of the cone which are away from the boundary.
	See the support of $\mollD{d}{\eta(\varepsilon)}$ in \Figure{fig:bdsupport}:

    \begin{figure}[H]
        \centering
		\includegraphics[keepaspectratio,width=80mm]{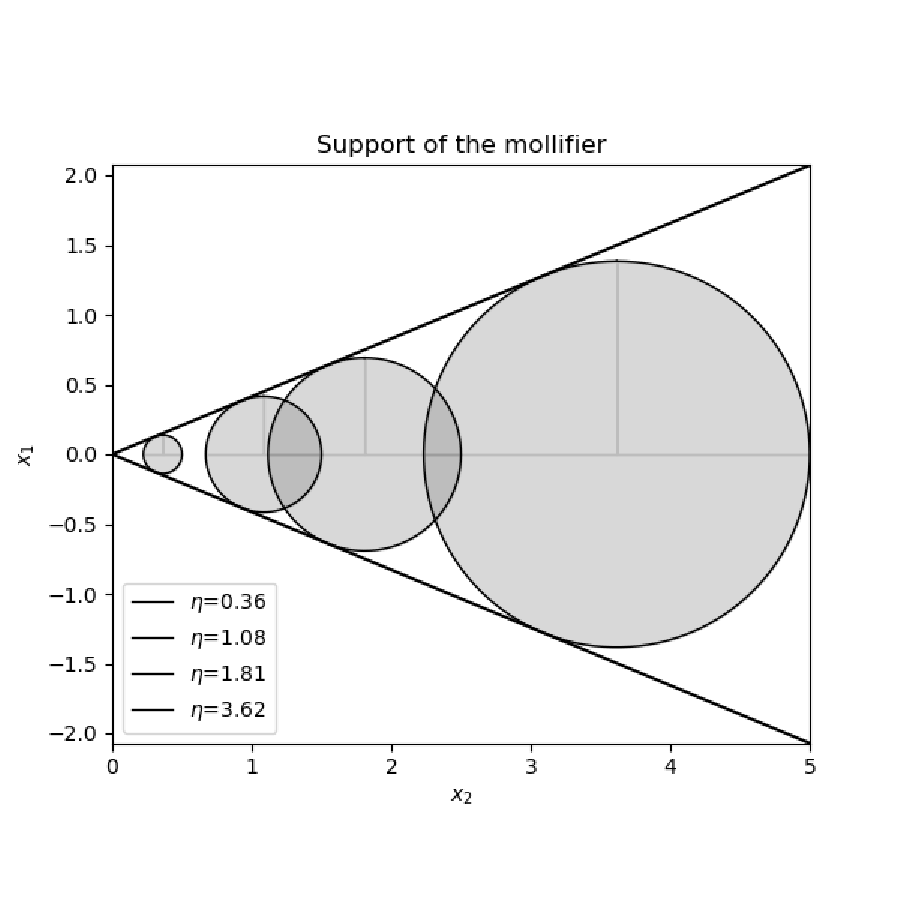}
		\caption{The cone and the support of $\mollD{d}{\eta(\varepsilon)}(\bv{0})$}
		\label{fig:bdsupport}
    \end{figure}

	We warn the reader that the argument{s} in \Section{sec:rl} {and \Section{sec:dvgf} do} not work if we replace $u_\eta$ by {$\widetilde{u_\varepsilon}$} since 
	{the convolution $\convDret{\cdot}{\cdot}{\varepsilon}{\bv{e}}$} does not {satisfy} the interchanging property \eqref{eq:inchg}.
	Indeed, we easily see that
	\begin{equation*}
		\int_{\mathbb{R}^d_+}\convDret{(\bv{b}\cdot\nabla u)}{\mollD{d}{\eta(\varepsilon)}}{\varepsilon}{\bv{e}}v\,dx = -\int_{\mathbb{R}^d_+}\bv{b}\cdot\nabla\left(\convDret{v}{\mollD{d}{\eta(\varepsilon)}}{\varepsilon}{-\bv{e}}\right)u\,dx.
	\end{equation*}
	Hence, defining the commutator $\widetilde{r_{{\varepsilon}}}(u,\bv{b})$ by
	\begin{equation*}
		\widetilde{r_\varepsilon}(u,\bv{b}) := \convDret{(\bv{b}\cdot\nabla u)}{\mollD{d}{\eta(\varepsilon)}}{\varepsilon}{\bv{e}} - \bv{b}\cdot\nabla\convDret{u}{\mollD{d}{\eta(\varepsilon)}}{\varepsilon}{-\bv{e}},	
	\end{equation*}
	we have the same consequence of \Lemma{lem:inchg} replacing $r_\eta$ by $\widetilde{r_\varepsilon}$.
	Extending the functions evenly across the boundary gives
	\begin{equation*}
		\bv{b}\cdot\nabla\left(\convDret{v}{\mollD{d}{\eta(\varepsilon)}}{\varepsilon}{-\bv{e}}\right)u = \bv{b}\cdot\nabla\left(\convDret{v}{\mollD{d}{\eta(\varepsilon)}}{\varepsilon}{\bv{e}}\right)u\qquad\mbox{on}\quad\mathbb{R}^{d-1}.
	\end{equation*}
	However, this is not the case for $x\in\mathbb{R}^d_+$.
\end{rem}
\begin{rem}[Regularity of velocity fields]\label{rem:reg}
	In \cite{BlouzaDret2001}, the regularity of the velocity field $\bv{b}$ was assumed to be stronger than ours, say $\bv{b}\in W^{1,\infty}(\Omega\times(0,T))$.
	However, in the light of our proof for \Lemma{lem:comm}, we may be able to relax the regularity assumption on $\bv{b}$, say $\bv{b}\in {L^1(0,T;W^{1,\beta}(\Omega;\mathbb{R}^d))}$ for $\beta \geq p'$.
\end{rem}
\begin{rem}\label{rem:bdry}
{Our proof for t}he main result of this paper cannot be extended to the case when $\Omega$ is Lipschitz
because we will use a normal coordinate system of $\partial\Omega$, {and this system is available when $\partial\Omega$ is much more regular}.
\end{rem}
\begin{rem}\label{rem:bdry2}
We cannot construct approximate solutions to the inhomogeneous initial-value problem \eqref{ETr}, \eqref{EDir} and \eqref{EInit} by $\widetilde{u}_\varepsilon$ defined by \eqref{eq:cone}
since the support of $\widetilde{u}_\varepsilon$ is strictly contained in $\Omega$ and is apart from the boundary $\partial\Omega$.
In other words, we have that $\widetilde{u}_\varepsilon\mid_{\partial\Omega}\equiv 0$ for every $0<\varepsilon\ll 1$.
However, we see that $\widetilde{u}_\varepsilon$ is still available for the homogeneous initial-boundary value problem, say the case $h\equiv 0$, since
the commutator for the convolution $\convDret{\cdot}{\cdot}{\varepsilon}{\mathbf{e}}$ was shown to converge to zero (see e.g., \cite[Lemma 3.1, Corollary 3.2]{BlouzaDret2001}),
and we need not invoke the interchanging property of the commutator.

In contrast to $\widetilde{u}_\varepsilon$,
our approximate solution $u_\eta$ defined by \eqref{def:moll} has non-trivial trace on $\partial\Omega$.
This fact makes it possible to construct approximate solutions to the inhomogeneous initial-boundary value problem \eqref{ETr}, \eqref{EDir}, and \eqref{EInit}.
\end{rem}
	We will give a proof of \Lemma{lem:comm} in \Section{sec:comm}.
	Admitting \Lemma{lem:comm}, we show the following lemma:
	\begin{lemma}\label{LMoll}
		Assume that $\Omega$ is the half space, say $\Omega = \mathbb{R}^{d}_+$, and $\partial\Omega = \mathbb{R}^{d-1}$.
		Suppose that $u$ is a weak solution of the initial-boundary value problem \eqref{ETr}, \eqref{EDir} and \eqref{EInit}.
		Assume that {$h$ and $u_0$ satisfy the hypotheses of \Theorem{PRDir},} $\bv{b}\in L^1(0,T;W^{1,p'}(\halfspace{d};\mathbb{R}^d))$ and $\divergence{\bv{b}} = 0$.
		Then, {we have}
		\begin{equation}\label{eq:LMoll}
			u_\eta(x', 0, t)(\bv{b}\cdot\nu_\Omega) = \conv{(h(\bv{b}\cdot\nu_\Omega))}{\mollDTHalf{d}{\eta}}(x',t)\qquad\mbox{for a.e. }(x',t)\in\mathbb{R}^{d-1}\times(0,T).
		\end{equation}
		Moreover, it {follows} that
		\begin{equation}\label{eq:LMollIni}
			u_\eta(x, 0) = \conv{u_0}{\mollDHalf{d}{\eta}}(x)\qquad\mbox{for a.e. }x\in\mathbb{R}^d_+.
		\end{equation}
		In particular, $u_\eta(\cdot,0,\cdot)(\bv{b}\cdot\nu_\Omega)$ converges to $h(\bv{b}\cdot\nu_\Omega)$ in $L^1(\mathbb{R}^{d-1}\times(0,T))$ as $\eta\to 0$, and
		$u_\eta(\cdot, 0)$ converges to $u_0$ in $L^p(\mathbb{R}^d_+)$ as $\eta\to 0$.
	\end{lemma}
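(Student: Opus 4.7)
The plan is to test the weak formulation from \Definition{DDir} against a product-form test function tailored to the half-line mollifier $\mollHalf{\eta}$, chosen so that the boundary and initial integrals of the weak formulation pick out exactly one side of each identity while the remaining terms reconstruct $u_\eta$ (up to a leftover that must be shown to cancel). The two identities follow from the same mechanism, differing only in which half-line mollifier is placed in which slot.

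For \eqref{eq:LMollIni}, I would take
\[
\varphi(y,\tau) := \mollDHalf{d}{\eta}(x - y)\,\psi(\tau), \qquad \psi(\tau) := \int_\tau^\infty \mollHalf{\eta}(s)\,ds,
\]
so that $\psi \in C^\infty([0,T))$ with $\psi(0) = 1$, $\psi'(\tau) = -\mollHalf{\eta}(\tau)$, and $\mathrm{supp}\,\psi \subseteq [0,\eta]$. Because $\mollHalf{\eta}(-x_d) = 0$ when $x_d > 0$, the trace $\varphi|_{y_d=0}$ vanishes for $x \in \mathbb{R}^d_+$, killing the boundary integral. Then $\psi' = -\mollHalf{\eta}$ together with the definition of $u_\eta$ convert $-\int u\,\partial_\tau\varphi$ into exactly $u_\eta(x, 0)$, and $-\int u_0\,\varphi(\cdot,0)$ becomes $-\conv{u_0}{\mollDHalf{d}{\eta}}(x)$.

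For \eqref{eq:LMoll}, the analogous choice is
\[
\varphi(y,\tau) := \mollD{d-1}{\eta}(x'-y')\,\zeta(y_d)\,\mollHalf{\eta}(\tau - t), \qquad \zeta(y_d) := \int_{y_d}^\infty \mollHalf{\eta}(s)\,ds,
\]
where $\zeta$ plays the role of $\psi$ but in the normal variable ($\zeta(0) = 1$, $\zeta'(y_d) = -\mollHalf{\eta}(y_d)$, $\mathrm{supp}\,\zeta \subseteq [0,\eta]$). The choice $\zeta(0) = 1$ makes the boundary integral evaluate directly to $\conv{h(\bv{b}\cdot\nu_\Omega)}{\mollDTHalf{d}{\eta}}(x',t)$, which is the right-hand side of \eqref{eq:LMoll}; the initial term drops out for $t > 0$ and $\eta$ small. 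The crucial observation is
\[
\partial_{y_d}\varphi = -\mollD{d-1}{\eta}(x'-y')\,\mollHalf{\eta}(y_d)\,\mollHalf{\eta}(\tau - t) = -\mollDHalf{d}{\eta}\bigl((x',0) - y\bigr)\,\mollHalf{\eta}(\tau - t),
\]
so the $b_d$-piece of $\int u\,\divergence(\varphi\bv{b})$ (recall $\divergence\bv{b} = 0$) reassembles into $-u_\eta(x',0,t)\,b_d(x',0,t) = u_\eta(x',0,t)(\bv{b}\cdot\nu_\Omega)(x',t)$, using $\nu_\Omega = -e_d$; this is the left-hand side of \eqref{eq:LMoll}.

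The main obstacle, common to both identities, is the ``tangential/time'' leftover, which has the form $\int_0^T \psi(\tau)\,\conv{(\bv{b}\cdot\nabla u)(\cdot,\tau)}{\mollDHalf{d}{\eta}}(x)\,d\tau$ for \eqref{eq:LMollIni} (and a tangential analogue with $\zeta\,\mollHalf{\eta}(\cdot - t)$ for \eqref{eq:LMoll}). I plan to attack it by writing the convolution via its distributional definition (\Remark{rem:grad_u}) combined with $\divergence\bv{b} = 0$, then invoking the interchanging property (\Lemma{lem:inchg}) to transfer the derivative from $u$ onto the time factor $\psi$, and finally integrating by parts in $\tau$: the boundary contributions collapse because of $\psi(0) = 1$ and $\psi(T) = 0$, delivering the precise cancellation required. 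Admissibility of $\varphi$ in $C_0^1(\overline{\Omega}\times[0,T))$ requires $t + \eta < T$, which is the source of the ``a.e. $(x',t)$'' qualifier. The convergence claims then follow routinely: standard mollifier theory gives $\conv{u_0}{\mollDHalf{d}{\eta}} \to u_0$ in $L^p(\mathbb{R}^d_+)$ and, since $\bv{b}\cdot\nu_\Omega \in L^{p'}(\partial\Omega)$ by the Sobolev trace theorem, $\conv{h(\bv{b}\cdot\nu_\Omega)}{\mollDTHalf{d}{\eta}} \to h(\bv{b}\cdot\nu_\Omega)$ in $L^1(\mathbb{R}^{d-1}\times(0,T))$; \eqref{eq:LMollIni} and \eqref{eq:LMoll} then transfer these convergences to $u_\eta(\cdot,0)$ and $u_\eta(\cdot,0,\cdot)(\bv{b}\cdot\nu_\Omega)$, respectively.
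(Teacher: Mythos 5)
Your strategy diverges fundamentally from the paper's, and it contains a genuine gap: the ``leftover'' term you identify does not vanish, and the machinery you propose for killing it cannot work.

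Concretely, plugging $\varphi(y,\tau) = \mollDHalf{d}{\eta}(x-y)\psi(\tau)$ into \Definition{DDir} does give, exactly as you say,
\[
u_\eta(x,0) - \conv{u_0}{\mollDHalf{d}{\eta}}(x) = -\int_0^T \psi(\tau)\, \conv{(\bv{b}\cdot\nabla u)(\cdot,\tau)}{\mollDHalf{d}{\eta}}(x)\, d\tau.
\]
But the right-hand side is a genuinely nonzero pointwise-in-$x$ quantity. Decompose it as $r_\eta(u,\bv{b}) + \bv{b}\cdot\nabla\conv{u}{\mollDHalf{d}{\eta}}$: the commutator part is controlled by \Lemma{lem:comm}, but the transport part $\bv{b}\cdot\nabla\conv{u}{\mollDHalf{d}{\eta}}$ is of size $O(1/\eta)$ in $L^p_x$ and is integrated against $\psi$ over an interval of length $\eta$, yielding an $O(1)$ contribution that cannot be dismissed. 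A concrete check: take $d=2$, $\bv{b}=(1,0)$ constant (tangent to $\partial\Omega$, so $\divergence\bv{b}=0$), $u_0\in C_0^\infty(\Omega)$, $h=0$. Then $u(x,t)=u_0(x_1-t,x_2)$ is a weak solution, and a short computation gives $u_\eta(x,0)-\conv{u_0}{\mollDHalf{d}{\eta}}(x) = \int_0^\infty\bigl[g(x_1-s)-g(x_1)\bigr]\mollHalf{\eta}(s)\,ds$ with $g(w):=\int u_0(z)\mollHalf{\eta}(z_2-x_2)\moll{\eta}(w-z_1)\,dz$, which is $O(\eta)$ but not zero for non-constant $u_0$. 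So your remainder cannot be identically zero, and no algebraic manipulation will make it so.

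Your proposed cure --- ``invoke the interchanging property to transfer the derivative from $u$ onto the time factor $\psi$, then integrate by parts in $\tau$'' --- is not well-posed. \Lemma{lem:inchg} is an identity between two spatial \emph{integrals} over $\halfspace{d}$; it interchanges the roles of two functions of $x$ under $\int_{\halfspace{d}}\cdots dx$. It cannot act on a quantity like $\conv{(\bv{b}\cdot\nabla u)(\cdot,\tau)}{\mollDHalf{d}{\eta}}(x)$ that you want to evaluate pointwise in $x$, nor can it push a spatial derivative onto a time factor $\psi(\tau)$. Integrating by parts in $\tau$ merely relocates the time derivative; it produces no cancellation of the transport term, and the boundary contributions $\psi(0)=1$, $\psi(T)=0$ do not help.

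The missing ingredient is the approximate PDE for $u_\eta$ established in \Lemma{lem:approx}. The paper's argument is a two-equation one: it tests \Definition{DDir} not with a single profile but with the \emph{doubly mollified} test function $\Phi$ of \eqref{def:Phi} (so the mollifier lands on $u$ and produces $u_\eta$), and simultaneously tests the approximate equation \eqref{eq:ueta} satisfied by $u_\eta$ with the \emph{unmollified} $\varphi$; the two resulting commutator terms --- $\int r_\eta(\varphi,\bv{b})\,u$ from the first and $\int r_\eta(u,\bv{b})\,\varphi$ from the second --- are then cancelled against each other by \Lemma{lem:inchg}, which is exactly the setting (an $x$-integral of a commutator against a test function) where the interchanging property is applicable. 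Only after this cancellation does one localize $\varphi$ (constant in $x_d$ near $\partial\Omega$, or constant in $t$) to read off the boundary and initial data. Your single-test-function route never produces the second commutator, so there is nothing for the first one to cancel against; the transport term $\bv{b}\cdot\nabla\conv{u}{\mollDHalf{d}{\eta}}$ survives. Reworking your argument so that it also exploits \eqref{eq:ueta} (tested against the spatially localized profile, with the weak formulation tested against its time-mollification) is the route back to the paper's proof.
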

	\begin{proof}
		For short notation, we abbreviate $\dL{d}(x)$ and $\dH{d-1}(x')$ by $dx$ and $dx'$, respectively unless confusion arises.
		For a test function $\varphi\in C^1_0(\closure{\mathbb{R}^d_+}\times(0,T))$, we now compute
		\begin{align}\label{eq:lem3_1}
			&\int_0^T\int_{\mathbb{R}^d_+}u_\eta\divergence{(\varphi\bv{b})}\,dxdt\nonumber\\
			&= \int_0^T\int_{\mathbb{R}^d_+}\left(\int_0^\infty \int_{\mathbb{R}^d_+} u(y,s)\mollDHalf{d}{\eta}(x-y)\mollHalf{\eta}(s-t)\,dyds\right)\divergence{(\varphi\bv{b})}(x,t)\,dxdt\nonumber\\
			&= \int_0^T\int_{\mathbb{R}^d_+}\left(\int_0^\infty \int_{\mathbb{R}^d_+} \left(\divergence{(\varphi\bv{b})}\right)(x,t)\mollDHalf{d}{\eta}(y-x)\mollHalf{\eta}(t-s)\,dxdt\right)u(y,s)\,dyds\nonumber\\
			&= \int_0^T\int_{\mathbb{R}^d_+}\left(\int_0^\infty \int_{\mathbb{R}^d_+} \left(\nabla\varphi\cdot\bv{b}\right)(x,t)\mollDHalf{d}{\eta}(y-x)\mollHalf{\eta}(t-s)\,dxdt\right)u(y,s)\,dyds\nonumber\\
			&= \int_0^T\int_{\mathbb{R}^d_+}\left(\int_0^\infty \conv{\left(\nabla\varphi\cdot\bv{b}\right)}{\mollDHalf{d}{\eta}}(y,t)\mollHalf{\eta}(t-s)\,dt\right)u(y,s)\,dyds\nonumber\\
			&= \int_0^T\int_{\mathbb{R}^d_+}\left\{\int_0^\infty \left(\bv{b}\cdot\nabla\conv{\varphi}{\mollDHalf{d}{\eta}}(y,t) + \commutaor{\varphi}{\bv{b}}(y,t)\right)\mollHalf{\eta}(t-s)\,dt\right\}u(y,s)\,dyds.
		\end{align}
		Letting
		\begin{equation}\label{def:Phi}
			\Phi(y,s) := \int_0^\infty \conv{\varphi}{\mollDHalf{d}{\eta}}(y,t)\mollHalf{\eta}(t-s)\,dt,
		\end{equation}
		{w}e have
		\begin{multline*}
			\int_0^T\int_{\halfspace{d}}u_\eta\divergence{(\varphi\bv{b})}\,dxdt \\= \int_0^T\int_{\halfspace{d}}(\bv{b}\cdot\nabla\Phi)\, u\,dxdt + \int_0^T\int_{\halfspace{d}}\left(\int_0^\infty\commutaor{\varphi}{\bv{b}}(x,s)\mollHalf{\eta}(s-t)\,ds\right)\,u\,dxdt.
		\end{multline*}
		We now choose $\varphi := \Phi$ in \Definition{DDir} as a test function to obtain
		\begin{multline*}
			-\int_0^T\int_{\halfspace{d}}u\partial_t\Phi\,dxdt - \int_{\halfspace{d}}u_0\Phi(\cdot,0)\,dx \\+ \int_0^T\int_{\mathbb{R}^{d-1}}h(\bv{b}\cdot\nu_\Omega)\Phi\biggm|_{\mathbb{R}^{d-1}}\,dx'dt - \int_0^T\int_{\halfspace{d}}(\bv{b}\cdot\nabla\Phi)u\,dxdt = 0,
		\end{multline*}
		and hence
		\begin{multline}\label{eq:lem3_4}
			\int_0^T\int_{\halfspace{d}}u_\eta\divergence{(\varphi\bv{b})}\,dxdt = \int_0^T\int_{\mathbb{R}^{d-1}}h(\bv{b}\cdot\nu_\Omega)\Phi\biggm|_{\mathbb{R}^{d-1}}\,dx'dt\\
			\quad - \int_0^T\int_{\halfspace{d}}u\,\partial_t\Phi\,dxdt - \int_{\halfspace{d}}u_0\Phi(\cdot,0)\,dx + \int_0^T\int_{\halfspace{d}}\left(\int_0^\infty r_\eta(\varphi,\bv{b})(x,s)\mollHalf{\eta}(s-t)\,ds\right)u\,dxdt.
		\end{multline}
		A direct calculation shows
		\begin{align*}
			&\int_0^T\int_{\mathbb{R}^d_+}u\,\partial_t\Phi\,dxdt = \int_0^T\int_{\halfspace{d}} u\partial_t\left(\int_0^\infty\conv{\varphi}{\mollDHalf{d}{\eta}}(x,s)\mollHalf{\eta}(s-t)\,ds\right)\,dxdt\\
			&= - \int_0^T\int_{\halfspace{d}}u(x,t)\left(\int_0^\infty\conv{\varphi}{\mollDHalf{d}{\eta}}(x,s)(\mollHalf{\eta})'(s-t)\,ds\right)\,dxdt\\
			&= - \int_0^T\int_{\halfspace{d}}u(x,t)\left\{\int_0^\infty\left(\int_{\halfspace{d}}\partial_t\varphi(y,s)\mollDHalf{d}{\eta}(x-y)\,dy\right)\mollHalf{\eta}(s-t)\,ds\right\}\,dxdt\\
			&= \int_0^T\int_{\halfspace{d}}\partial_t\varphi(y,s)\left\{\int_0^\infty\left(\int_{\halfspace{d}}u(x,t)\mollDHalf{d}{\eta}(y-x)\,dx\right)\mollHalf{\eta}(t-s)\,dt\right\}\,dyds\\
			&= \int_0^T\int_{\halfspace{d}} u_\eta\partial_t\varphi\,dyds.
		\end{align*}
		We now choose a test function $\widetilde{\varphi}\in C^1_0(\mathbb{R}^{d-1}\times[0,T))$ and {let $\delta > 0$. Then, we} define a function
		$\varphi\in C^1_0(\closure{\mathbb{R}^d_+}\times[0,T))$ by $\varphi(x,t) := \widetilde{\varphi}(x',t){\psi_\delta(x_d)}$ for each $x = (x',x_d)\in\mathbb{R}^d_+$ and $t\in(0,T)$,
		where $\psi_\delta : [0,\infty)\to [0,1]$ is a cut-off function such that
		\begin{equation*}
			\psi_\delta(x_d) = \begin{cases}
				1 & \mbox{if}\quad 0\leq x_d < \delta,\\
				0 & \mbox{if}\quad x_d > 2\delta.
			\end{cases}
		\end{equation*}
		Namely, $\varphi$ is constant in the $x_d$-direction {in $\mathbb{R}^{d-1}\times[0,\delta]\times(0,T)$}.
		Then, {we take $\eta > 0$ so small that $\operatorname{supp}\mollHalf{\eta}\subset[0,\delta]$ (see \Figure{fig:cutoff}).}

    \begin{figure}[H]
        \centering
		\includegraphics[keepaspectratio,width=80mm]{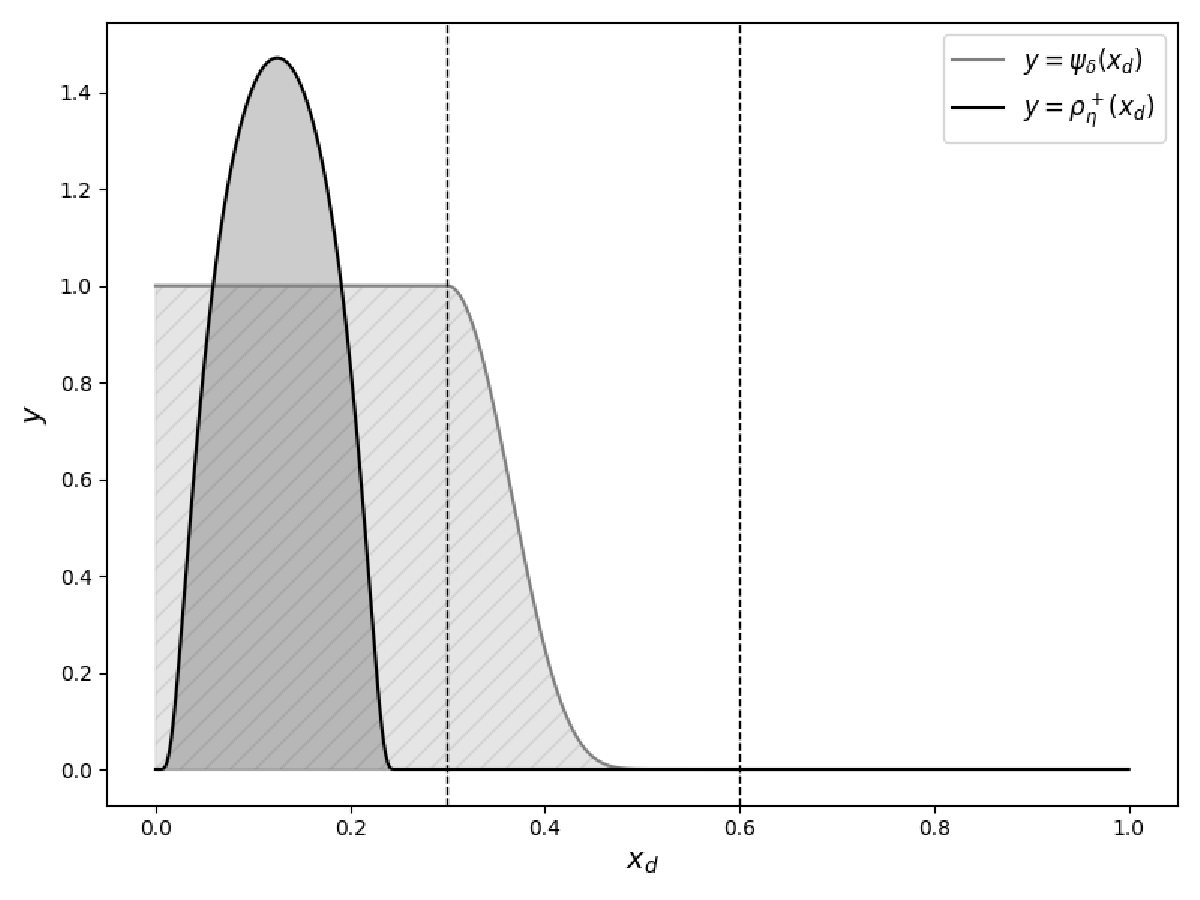}
		\caption{{The graphs of $\psi_\delta(x_d)$ and $\mollHalf{\eta}(x_d)$. Here, $\delta = 0.3$ and $\eta = 0.25$.}}
		\label{fig:cutoff}
    \end{figure}

		For this selection of $\varphi$, we can deform the right-hand side of \eqref{eq:lem3_4} including the boundary term as follows:
		\begin{align}\label{eq:lem3_5}
			&\int_0^T\int_{\mathbb{R}^{d-1}}h(\bv{b}\cdot\nu_\Omega)\Phi\biggm|_{\mathbb{R}^{d-1}}\,dx'dt\nonumber\\
			&= \int_0^T\int_{\mathbb{R}^{d-1}}\left(\int_0^\infty\conv{\varphi}{\mollDHalf{d}{\eta}}(x',0,s)\mollHalf{\eta}(s-t)\,ds\right)\,dx'dt\nonumber\\
			&= \int_0^T\int_{\mathbb{R}^{d-1}}h(\bv{b}\cdot\nu_\Omega)\left\{\int_0^\infty\left(\int_0^\infty\int_{\mathbb{R}^{d-1}}\varphi(y',0,s)\mollD{d-1}{\eta}(x'-y')\mollHalf{\eta}(y_d)\,dy'dy_d\right)\mollHalf{\eta}(s-t)\,ds\right\}\,dx'dt\nonumber\\
			&= \int_0^T\int_{\mathbb{R}^{d-1}}\widetilde{\varphi}(y',s)\left\{\int_0^\infty\int_{\mathbb{R}^{d-1}}h(\bv{b}\cdot\nu_\Omega)(x',t)\mollD{d-1}{\eta}(x'-y')\mollHalf{\eta}(t-s)\,dx'dt\right\}\,dy'ds\int_0^{{\delta}}\mollHalf{\eta}(y_d)\,dy_d\nonumber\\
			&= \int_0^T\int_{\mathbb{R}^{d-1}}\widetilde{\varphi}(y',s)\conv{(h(\bv{b}\cdot\nu_\Omega))}{\mollDTHalf{d}{\eta}}(y',s)\,dy'ds.
		\end{align}
		Here, we note that $\varphi(x',x_d, t) = \widetilde{\varphi}(x',t)$ {if $0\leq x_d \leq \delta$ and $\operatorname{supp}\mollHalf{\eta}\subset[0,\delta]$} to derive the third equality of \eqref{eq:lem3_5}.
		Meanwhile, as we have observed in \Section{sec:approx}, 
		the function $u_\eta$ is a classical solution to the following equation:
		\begin{equation}\label{eq:ueta}
			(u_\eta)_t + \bv{b}\cdot\nabla u_\eta = -\int_0^\infty r_\eta(u,\bv{b})(\cdot, s)\mollHalf{\eta}(s-t)\,ds\qquad\mbox{in}\quad\mathbb{R}^d_+\times(0,T).
		\end{equation}
		We test \eqref{eq:ueta} by $\varphi$ to obtain
		\begin{multline}\label{eq:lem3_7}
			-\int_0^T\int_{\mathbb{R}^d_+}u_\eta\partial_t\varphi\,dxdt - \int_0^T\int_{\mathbb{R}^d_+} u_\eta\divergence{(\varphi\bv{b})}\,dxdt 
			+ \int_0^T\int_{\mathbb{R}^{d-1}}(u_\eta(\bv{b}\cdot\nu_\Omega))(x',0,t)\widetilde{\varphi}(x',t)\,dx'dt \\
			- \int_{\halfspace{d}}u_\eta(x,0)\varphi(x,0)\,dx= -\int_0^T\int_{\mathbb{R}^d_+}\left(\int_0^\infty r_\eta(u,\bv{b})(x,s)\mollHalf{\eta}(s-t)\,ds\right)\varphi\,dxdt.
		\end{multline}
		Here, we note that the trace $u_\eta(\cdot,0,\cdot)$ is well-defined since $u_\eta$ is smooth in $\mathbb{R}^{d-1}$.
		We now recall the interchanging rule of the commutator (\Lemma{lem:inchg}) to deduce that
		\begin{multline*}
			\int_0^T\int_{\halfspace{d}}\left(\int_0^\infty \commutaor{u}{\bv{b}}(x,s)\mollHalf{\eta}(s-t)\,ds\right)\varphi\,dxdt\\
			= \int_0^T\int_{\halfspace{d}}\left(\int_0^\infty\commutaor{\varphi}{\bv{b}}(x,s)\mollHalf{\eta}(s-t)\,ds\right)u\,dxdt.
		\end{multline*}
		Combining \eqref{eq:lem3_4}, \eqref{eq:lem3_5} and \eqref{eq:lem3_7} yield
		\begin{equation*}
			\int_0^T\int_{\mathbb{R}^{d-1}}u_\eta(\bv{b}\cdot\nu_\Omega)\,\widetilde{\varphi}\,dx'dt = \int_0^T\int_{\mathbb{R}^{d-1}}\conv{(h(\bv{b}\cdot\nu_\Omega))}{\mollDTHalf{d}{\eta}}\widetilde{\varphi}\,dx'dt.
		\end{equation*}
		Since $\widetilde{\varphi}$ is arbitrary, we deduce from the fundamental lemma of calculus that
		$u_\eta(\bv{b}\cdot\nu_\Omega) = \convS{h(\bv{b}\cdot\nu_\Omega)}{\mollDTHalf{d}{\eta}}$ a.e. in $\mathbb{R}^{d-1}\times(0,T)$, and hence the formula \eqref{eq:LMoll} follows.

		The proof of \eqref{eq:LMollIni} is similar. We give it here for completeness. We choose a test function $\widetilde{\varphi}\in C^1_0(\mathbb{R}^d_+)$ and define $\varphi\in C^1_0(\closure{\mathbb{R}^d_+}\times(0,T))$ by
		$\varphi(x,t) := \widetilde{\varphi}(x)$ for each $x\in\mathbb{R}^d_+$, namely $\varphi$ is constant in time.
		Then, we observe that
		\begin{align}\label{eq:lem3_8}
			\int_{\halfspace{d}}u_0\Phi(\cdot, 0)\,dx &= \int_{\halfspace{d}}u_0(x)\int_0^\infty\conv{\varphi}{\mollDHalf{d}{\eta}}(y,0)\mollHalf{\eta}(t)\,dt\,dx\nonumber\\
			&= \int_{\halfspace{d}}u_0(x)\left(\int_{\halfspace{d}}\widetilde{\varphi}(y)\mollDHalf{d}{\eta}(x-y)\,dy\right)\,dx\nonumber\\
			&= \int_{\halfspace{d}}\widetilde{\varphi}(y)\left(\int_{\halfspace{d}}u_0(x)\mollDHalf{d}{\eta}(y-x)\,dx\right)\,dy\nonumber\\
			&= \int_{\halfspace{d}}\widetilde{\varphi}(y)\conv{u_0}{\mollDHalf{d}{\eta}}(y)\,dy.
		\end{align}
		Since $\operatorname{supp}\varphi\subset\subset\halfspace{d}$, the boundary integral terms of \eqref{eq:lem3_4} and \eqref{eq:lem3_7} vanish.
		Hence, we deduce from \eqref{eq:lem3_8} that
		\begin{equation*}
			\int_{\halfspace{d}}\conv{u_0}{\mollDHalf{d}{\eta}}\widetilde{\varphi}\,dx = \int_{\halfspace{d}}u_\eta(\cdot,0)\widetilde{\varphi}\,dx.
		\end{equation*}
		Since $\widetilde{\varphi}$ is arbitrary, we have the formula \eqref{eq:LMollIni}. The convergence of $u_\eta(\cdot,0)$ to $u_0$ in $L^p(\mathbb{R}^d_+)$ as $\eta\to 0$ follows from this {formula}. {The proof is now complete.}
	\end{proof}
	\begin{rem}\label{rem:spec}
		We emphasize that it is crucial to choose {$x_d$-independent }functions to derive the formulae for the boundary data \eqref{eq:LMoll} and the initial data \eqref{eq:LMollIni}.
	\end{rem}
	In \Lemma{LMoll}, we have only given a proof for the half spaces $\mathbb{R}^d_+$ to present the main idea.
	We shall discuss an extension of the consequence of \Lemma{LMoll} to general domains $\Omega$.
	In this case, the argument is more involved than the half-space cases due to the curved boundary $\partial\Omega$.
	However, we can still obtain a similar consequence of \Lemma{LMoll} even for general domains $\Omega$ having a regular boundary.
	\begin{lemma}\label{LMoll2}
		Assume that $\Omega$, $h$, and $u_0$ satisfy the hypotheses in \Theorem{PRDir},
		and that \linebreak {$\bv{b}\in L^1(0,T;W^{1,p'}(\Omega;\mathbb{R}^d))$ and} $\divergence{\bv{b}} = 0$.
		Then, {we have}
		\begin{equation*}
			u_\eta(\bv{b}\cdot\nu_\Omega)\to h(\bv{b}\cdot\nu_\Omega)\quad\mbox{in}\quad L^1(\partial\Omega\times(0,T))\quad\mbox{as}\quad\eta\to 0
		\end{equation*}
		and
		\begin{equation*}
			u_\eta(\cdot,0)\to \convS{u_0}{\mollDHalf{d}{\eta}}\quad\mbox{in}\quad L^p(\Omega)\quad\mbox{as}\quad\eta\to 0.
		\end{equation*}
	\end{lemma}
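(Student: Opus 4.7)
The plan is to reduce to the half-space case treated in \Lemma{LMoll} by locally flattening $\partial\Omega$ via a normal coordinate system. Since $\Omega$ is uniformly $C^2$, there exist $\delta_0>0$ and a $C^1$-diffeomorphism $\Psi(\sigma,s):=\sigma-s\nu_\Omega(\sigma)$ from $\partial\Omega\times[0,\delta_0)$ onto the tubular neighborhood $N:=\Psi(\partial\Omega\times[0,\delta_0))$, whose Jacobian is uniformly bounded above and below. I would fix a locally finite atlas $\{(V_\alpha,\phi_\alpha)\}_\alpha$ of $\partial\Omega$ with chart sizes controlled by the uniform-$C^2$ constant, together with a subordinate partition of unity $\{\chi_\alpha\}_\alpha$ whose $C^1$ norms are uniformly bounded.

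Next I would redefine the mollification $u_\eta$ in the general-domain setting by applying formula \eqref{def:moll} locally in the flattened coordinates $(y',s):=(\phi_\alpha(\sigma),s)$: in each chart, tangential mollification by $\mollD{d-1}{\eta}$ is combined with normal mollification by $\mollHalf{\eta}$ through the push-forward of $\Psi$, and the pieces are glued with $\{\chi_\alpha\}$; away from $N$ the standard interior mollifier is used. The structural point is that for a test function of the form $\varphi(\Psi(\sigma,s),t):=\widetilde{\varphi}(\sigma,t)\psi_\delta(s)$, constant in the normal direction on $[0,\delta]$ with $\psi_\delta$ and $\eta$ chosen as in \Lemma{LMoll} so that $\operatorname{supp}\mollHalf{\eta}\subset[0,\delta]$, the computation \eqref{eq:lem3_1}--\eqref{eq:lem3_7} can be carried out verbatim in each chart, producing local analogues of \eqref{eq:LMoll} and \eqref{eq:LMollIni}. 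Here the interchanging identity \Lemma{lem:inchg} is applied on each chart after transporting $\bv{b}$ by $\Psi$; the transported field is still in $L^1(0,T;W^{1,p'})$ and has bounded divergence thanks to the uniform $C^2$ regularity.

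Summing the local identities over $\alpha$ gives relations of the form $u_\eta(\bv{b}\cdot\nu_\Omega)=\bigl(h(\bv{b}\cdot\nu_\Omega)\bigr)\star_\eta + E_\eta$ on $\partial\Omega\times(0,T)$ and an analogous relation at $t=0$, where $\star_\eta$ denotes the curvilinear analogue of the half-space convolution with $\mollDTHalf{d}{\eta}$ (respectively $\mollDHalf{d}{\eta}$) and $E_\eta$ collects the geometric error terms. The stated $L^1$-convergence on $\partial\Omega\times(0,T)$ and $L^p$-convergence on $\Omega$ then follow from $E_\eta\to 0$ combined with the classical convergence $h\star_\eta\to h$ in $L^1(\partial\Omega\times(0,T))$ and $u_0\star_\eta\to u_0$ in $L^p(\Omega)$, which are intrinsic mollifier statements on the manifold $\partial\Omega$ and on $\Omega$, respectively.

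The main obstacle will be controlling $E_\eta$. It consists of three contributions: (i) terms generated by the non-trivial Jacobian of $\Psi$ when one rewrites $\divergence(\varphi\bv{b})$ in curved coordinates, which vary by $O(\eta)$ on the $\eta$-thin shell where $\mollHalf{\eta}$ in the normal variable is supported, and hence tend to zero in $L^1$ by dominated convergence; (ii) tangential commutators between $\chi_\alpha$ and $\bv{b}\cdot\nabla$ acting on the mollified quantity, which are smoothed versions of the commutators studied in \Lemma{lem:comm} and vanish as $\eta\to 0$ by that lemma; and (iii) curvature-induced commutators between the normal-coordinate mollification and $\bv{b}\cdot\nabla$, likewise controlled by \Lemma{lem:comm}. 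The uniformly-$C^2$ hypothesis on $\Omega$ is used precisely here: it delivers uniform pointwise bounds on $\Psi$, its inverse, and their first derivatives on the $\delta_0$-tubular neighborhood, as emphasized in \Remark{rem:bdry}, which a merely Lipschitz boundary could not provide.
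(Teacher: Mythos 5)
Your overall strategy — flatten the boundary with a normal coordinate system and reduce to the half-space calculation of \Lemma{LMoll} — is indeed the paper's, but you miss the single idea that makes the paper's proof close without any error-term analysis, and the error-term program you substitute is not worked out correctly.

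The paper does not localize with a partition of unity: it works globally in the tubular neighborhood $\Omega_{2\delta}$ via $\chgval(x)=(\pi(x),-d(x))$, and the decisive step is the choice of extended test function
\begin{equation*}
\varphi(x,t):=\frac{\widetilde{\varphi}(\pi(x),t)}{J(\chgval(x))}\qquad\text{for }-\delta<d(x)\le 0,
\end{equation*}
where $J(\chgval(x))=\prod_{i=1}^{d-1}(1-\kappa_i(\pi(x))d(x))$ is the Jacobian of the normal coordinate change (Eq.~\eqref{eq:Jacobian}). In your proposal you take $\varphi(\Psi(\sigma,s),t)=\widetilde{\varphi}(\sigma,t)\psi_\delta(s)$, with no Jacobian correction, and this is exactly what forces the geometric error $E_\eta$. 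With the $1/J$ factor, the Jacobian appearing in the mollified boundary integrand (the $J(\chgval(z',z_d))$ in \eqref{eq:crv_2}) cancels \emph{identically}, not merely up to $O(\eta)$, so that after testing one obtains the exact a.e.\ identity $u_\eta(\bv{b}\cdot\nu_\Omega)=\convS{(h(\bv{b}\cdot\nu_\Omega))}{\mollDTHalf{d}{\eta}}$ on $\partial\Omega\times(0,T)$ (and similarly at $t=0$). The $L^1$ and $L^p$ convergences in the statement are then nothing more than standard mollifier convergence, and the proof is complete with no $E_\eta$ to estimate. This is precisely where the $C^3$ (hence $C^1$ for $J$, $C^1$ for $\kappa_i$, $C^3$ for $d$) regularity assumption of \Theorem{PRDir} enters, so that $\varphi$ is an admissible $C^1$ test function.

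The error-term program you propose instead has genuine gaps. You claim that the tangential commutators between $\chi_\alpha$ and $\bv{b}\cdot\nabla$, and the curvature-induced commutators, ``vanish by \Lemma{lem:comm}''; but \Lemma{lem:comm} concerns only the specific commutator $\commutaor{u}{\bv{b}}$ built from the mollifier $\mollDHalf{d}{\eta}$, and says nothing about commutators with cutoff functions or with a curvilinear pushforward of the mollification. Controlling those would require a separate argument (and for the partition-of-unity terms the natural mechanism is not \Lemma{lem:comm} but the cancellation $\sum_\alpha\nabla\chi_\alpha=0$, which you do not invoke). Moreover, the identity $u_\eta(\bv{b}\cdot\nu_\Omega)=(h(\bv{b}\cdot\nu_\Omega))\star_\eta+E_\eta$ that you want to ``sum over $\alpha$'' is only obtained weakly (tested against $\widetilde{\varphi}$), so to extract it as a pointwise a.e.\ statement with an $L^1$-small remainder you would also need to verify that $E_\eta$ is a function with the stated smallness uniformly over all test functions — this is another step your proposal does not supply. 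The paper sidesteps all of these issues at once with the Jacobian-corrected test function.

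In short: adopt the paper's global normal coordinate set-up (no partition of unity needed), define $u_\eta$ near $\partial\Omega$ with the Jacobian weight as in the paper, and extend $\widetilde{\varphi}$ by dividing by $J(\chgval)$. Then the computation of \Lemma{LMoll} goes through line by line and yields an exact mollified-trace identity, from which the two stated convergences are immediate.
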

	\begin{proof}
		The basic idea of the proof is working on the boundary part $\partial\Omega$ of the domain $\Omega$
		and its normal direction separately near the boundary $\partial\Omega$.
		For each $\delta > 0$, we define a subdomain $\Omega_\delta$ of $\Omega$ by $\Omega_\delta := \{x\in\Omega\mid\operatorname{dist}(x,\partial\Omega) < \delta\}$.
		From the assumption on the regularity of $\partial\Omega$, the domain $\Omega$ satisfies the interior sphere condition.
		Thus, we can take $\delta > 0$ such that for every $x\in\Omega_{2\delta}$, there exists a unique point $\pi(x)\in\partial\Omega$ such that $|x-\pi(x)| = \operatorname{dist}(x,\partial\Omega)$.
		This relation defines the $C^1$ change of variables $\chgval:\Omega_{2\delta}\ni x\mapsto (\pi(x),-d(x))\in\partial\Omega\times(0,2\delta)$,
		where $d(x)$ denotes the signed distance function from $\partial\Omega$.

		We take any $\widetilde{\varphi}\in C^\infty_0(\partial\Omega\times(0,T))$ and try to show that
		\begin{equation}\label{eq:LMoll2_1}
			\int_0^T\int_{\partial\Omega}u_\eta(\bv{b}\cdot\nu_\Omega)\widetilde{\varphi}\,\dH{d-1}dt = \int_0^T\int_{\partial\Omega}\conv{h(\bv{b}\cdot\nu_\Omega)}{\mollDTHalf{d}{\eta}}\widetilde{\varphi}\,\dH{d-1}dt,
		\end{equation}
		where $u_\eta$ is now defined through the change of variables $\chgval$. If $x = (x',x_d) \in \Omega_{2\delta}$, then
		\begin{equation*}
			u_\eta(x',x_d,t) := \int_0^T\int_{\partial\Omega}\int_0^\infty u(y',y_d,s)\mollD{d-1}{\eta}(x'-y')\mollHalf{\eta}(y_d-x_d)\mollHalf{\eta}(s-t)J(\chgval(x))\,\dH{d-1}(y')dy_dds,
		\end{equation*}
		where $J(\chgval(x))$ denotes the Jacobian of the change of variables $\chgval$ at $x$, and its explicit form is given by
		\begin{equation}\label{eq:Jacobian}
			J(\chgval(x)) = \prod_{i=1}^{d-1}(1-\kappa_i(\pi(x))d(x))
		\end{equation}
		with $\kappa_i(\pi(x))\,(1\leq i\leq d-1)$ being the principal curvatures of $\partial\Omega$ at $\pi(x)$ (see \cite[Chapter 14, Appendix]{GilbargTrudinger1983}).
		We see that $J(\chgval(x))\neq 0$ holds in $\Omega_{2\delta}$ for sufficiently small $\delta > 0$ thanks to the interior sphere condition of $\partial\Omega$.
		Subsequently, we always choose such a positive constant $\delta$ unless otherwise mentioned.
		If $x\notin \Omega_{2\delta}$, then $u_\eta$ is defined by
		\begin{equation*}
			u_\eta(x',x_d,t) := \int_0^T\int_{\Omega}u(y,s)\mollD{d}{\eta}(x - y)\mollHalf{\eta}(s-t)\,\dL{d}(y)ds.
		\end{equation*}
		As in the case of the half space, we still use the convolution forms \eqref{def:moll2} and \eqref{def:moll3}.
		By the same argument as in \eqref{eq:lem3_1}, we observe that
		\begin{multline*}
			\int_0^T\int_{\Omega}u_\eta\divergence{(\varphi\bv{b})}\,\dL{d}dt =\\\int_0^T\int_{\Omega}u(\bv{b}\cdot\nabla\Phi)\,\dL{d}dt + \int_0^T\int_{\Omega}\left(\int_0^\infty\commutaor{\varphi}{\bv{b}}(\cdot,s)\mollHalf{\eta}(s-t)\,ds\right)\,u\,\dL{d}dt
		\end{multline*}
		for any test function $\varphi\in C^1_0(\closure{\Omega}\times(0,T))$, where $\Phi$ is defined by \eqref{def:Phi}.
		Since $u$ is a weak solution to \eqref{ETr}, \eqref{EDir} and \eqref{EInit}, we continue
		\begin{align}\label{eq:crv_1}
			\begin{split}
				&\int_0^T\int_{\Omega}u_\eta\divergence{(\varphi\bv{b})}\,\dL{d}dt = -\int_0^T\int_{\Omega}u_\eta\partial_t\Phi\,\dL{d}dt - \int_\Omega u_0\Phi(\cdot,0)\,\dH{d-1}\\
				&+\int_0^T\int_{\partial\Omega} h(\bv{b}\cdot\nu_\Omega)\Phi\biggm|_{\partial\Omega}\,\dH{d-1}dt + \int_0^T\int_{\partial\Omega}\left(\int_0^\infty\commutaor{\varphi}{\bv{b}}(\cdot,s)\mollHalf{\eta}(s-t)\,ds\right)u\,\dL{d}dt
			\end{split}
		\end{align}
		taking $\Phi$ as a test function.

		To show \eqref{eq:LMoll2_1}, we deform the boundary integral term on the right-hand side of the above equality.
		To this end, we extend the domain of $\widetilde{\varphi}$ to whole $\closure{\Omega}\times(0,T)$ by
		\begin{equation}\label{eq:LMoll2_2}
			\varphi(x,t) := 
			\begin{cases}
				\frac{\widetilde{\varphi}(\pi(x), t)}{J(\chgval(x))}&\qquad\mbox{if}\quad -\delta < d(x) \leq 0,\\
				0&\qquad\mbox{if}\quad -2\delta \geq d(x).
			\end{cases}
		\end{equation}
		For points $x\in \Omega$ such that $-2\delta < d(x) < -\delta$, we define $\varphi(x,t)$ in terms of some smooth cutoff function.
		The function $\varphi(x,t)$ defined in \eqref{eq:LMoll2_2} is well-defined and a $C^1$ function which is eligible for testing the equation since $J(\chgval(x))$ is $C^1$ (note that functions $\kappa_i(\pi(x))\,(1\leq i\leq d-1)$ are $C^1$ and $d(x)$ is $C^3$).
		Since $\operatorname{supp}{\mollHalf{\eta}} = [0,\eta]$, we can take $\eta$ so small that $\mollHalf{\eta}\equiv 0$ if $x$ is outside $\Omega_{\delta}$.
		For such $\eta > 0$, we compute
		\begin{align}\label{eq:crv_2}
			&\int_0^T\int_{\partial\Omega}h(\bv{b}\cdot\nu_\Omega)\Phi\biggm|_{\partial\Omega}\,\dH{d-1}dt\nonumber \\
			&= \int_0^T\,dt\int_{\partial\Omega}\,\dH{d-1}(y')h(\bv{b}\cdot\nu_\Omega)(y')\nonumber\\
			&\qquad\times\int_{\partial\Omega}\,\dH{d-1}(z')\int_0^\infty\,dz_d\,\varphi(z',z_d)\mollD{d-1}{\eta}(y'-z')\mollHalf{\eta}(z_d)J(\chgval(z',z_d))\nonumber \\
			&= \int_0^T\,dt\int_{\partial\Omega}\,\dH{d-1}(y')h(\bv{b}\cdot\nu_\Omega)(y')\nonumber\\
			&\qquad\times\int_{\partial\Omega}\,\dH{d-1}(z')\int_0^\delta\,dz_d\,\frac{\widetilde{\varphi}(z',t)}{J(\chgval(z',z_d))}\mollD{d-1}{\eta}(y'-z')\mollHalf{\eta}(z_d)J(\chgval(z',z_d))\nonumber \\
			&= \int_0^T\,dt\int_{\partial\Omega}\,\dH{d-1}(y')h(\bv{b}\cdot\nu_\Omega)(y')\int_{\partial\Omega}\,\dH{d-1}(z')\,\mollD{d-1}{\eta}(y'-z')\widetilde{\varphi}(z',t)\int_0^\delta\,dz_d\,\mollHalf{\eta}(z_d)\nonumber \\
			&= \int_0^T\,dt\int_{\partial\Omega}\,\dH{d-1}\,\conv{h(\bv{b}\cdot\nu_\Omega)}{\mollD{d-1}{\eta}}\widetilde{\varphi}.
		\end{align}
		Here, we have invoked that $\operatorname{supp}\mollHalf{\eta}\subset\subset\Omega_\delta$ to restrict the integral domain of $z_d$ to $[0,\delta]$ and 
		have used the definition of $\varphi$. Since $u_\eta$ is a classical solution (see \Lemma{lem:approx}) to
		\begin{equation*}
			(u_\eta)_t + \bv{b}\cdot\nabla u_\eta = -\int_0^\infty r_\eta(u, \bv{b})(\cdot,s)\mollHalf{\eta}(s-t)\,ds\qquad\mbox{in}\quad\Omega\times(0,T),
		\end{equation*}
		testing this equation by $\varphi$ gives
		\begin{multline}\label{eq:crv_3}
			\int_0^T\int_{\Omega}u_\eta\partial_t\varphi\,\dL{d}dt + \int_0^T\int_{\Omega}u_\eta\divergence{(\varphi\bv{b})}\,\dL{d}dt + \int_\Omega u_\eta(\cdot,0)\varphi(\cdot,0)\,\dL{d} \\
			- \int_0^T\int_{\partial\Omega}u_\eta(\bv{b}\cdot\nu_\Omega)\widetilde{\varphi}\,\dH{d-1}dt = \int_0^T\int_{\Omega}\left(\int_0^\infty r_\eta(u,\bv{b})(\cdot,s)\mollHalf{\eta}(s-t)\,ds\right)\varphi\,\dL{d}dt.
		\end{multline}
		Here, we note that $\varphi\mid_{\partial\Omega} = \widetilde{\varphi}$ since $d(x') = 0$ and $J(\chgval(x')) = 1$ hold for $x'\in\partial\Omega$.
		Combining \eqref{eq:crv_1}, \eqref{eq:crv_2}, and \eqref{eq:crv_3} together with the interchanging property \eqref{eq:inchg} of the commutator,
		we obtain \eqref{eq:LMoll2_1}, and hence it follows that
		\begin{equation*}
			u_\eta(\bv{b}\cdot\nu_\Omega) = \convS{(h(\bv{b}\cdot\nu_\Omega))}{\mollDTHalf{d}{\eta}}\qquad\mbox{a.e. in}\quad\partial\Omega\times(0,T).
		\end{equation*}
		Therefore, the convergence of $u_\eta(\cdot,0,\cdot)(\bv{b}\cdot\nu_\Omega)$ to $h(\bv{b}\cdot\nu_\Omega)$ in $L^p(\partial\Omega\times(0,T))$ follows.
		By a similar argument, it follows that
		\begin{equation*}
			\int_\Omega u_\eta(\cdot, 0)\widetilde{\varphi}\,\dL{d} = \int_\Omega u_0\widetilde{\varphi}\,\dL{d}\qquad\mbox{for all}\quad\widetilde{\varphi}\in C^1_0(\Omega).
		\end{equation*}
		Hence, $u_\eta(\cdot,0)\to u_0$ in $L^p(\Omega)$ also follows.
	\end{proof}
	\begin{rem}
		We have invoked the assumption that $\Omega$ has $C^3$ boundary to pull the test function $\varphi$ back to the boundary $\partial\Omega$ as $\widetilde{\varphi}$
		(see the last two equalities in \eqref{eq:crv_2}).
		We also need to take the parameter $\eta$ so small that $\varphi$ depends only on the tangential variables in $\partial\Omega$ inside the tubular domain $\Omega_\delta$.
		These are the main differences from the half-space cases.
		If the boundary is all flat, then we can take $\delta = 0$, and the proof is simplified due to $J(\chgval(x))\equiv 1$ as shown in \Lemma{LMoll}.
	\end{rem}
	We are now in the position to prove the relabeling lemma (\Lemma{LIn}).
	We assume that $\bv{b}$ is solenoidal in the following proof since we invoke the strong convergence result of the Dirichlet boundary data and the initial data of the approximate solution which has been shown in \Lemma{LMoll}.
	However, {the assumption that $\divergence{\bv{b}} = 0$ is not necessary for these convergence results as we will show in \Lemma{LMoll3}.
	We admit this at this stage and give the proof of the relabeling lemma:}
	\begin{proof}[\textbf{Proof of \Lemma{LIn}}]
		Since $\theta$ is smooth, the chain rule and \Lemma{lem:approx} imply that
		\begin{multline*}
			(\theta(u_\eta))_t + \mathbf{b}\cdot\nabla(\theta(u_\eta)) \\
			= \theta'(u_\eta)\left((u_\eta)_t + \mathbf{b}\cdot\nabla u_\eta\right) = -\theta'(u_\eta)\int_0^\infty r_\eta(u,\bv{b})(\cdot,s)\mollHalf{\eta}(s-t)\,ds\quad\mbox{in}\quad\mathbb{R}^d_+\times(0,T)
		\end{multline*}
		holds in the classical sense.
		Here, we have invoked the first equation of \eqref{eq:ueta} to derive the second equality.
		Testing the above equality by $\varphi\in C^\infty_0(\closure{\mathbb{R}^d_+}\times[0,T))$, we obtain that
		\begin{align}\label{eq:weak2}
			&-\int_0^T\int_{\mathbb{R}^d_+}\theta(u_\eta)\varphi_t\,\dL{d}dt - \int_{\mathbb{R}^d_+}\theta(u_\eta)(\cdot,0)\varphi(\cdot,0)\,\dL{d} + \int_0^T\int_{\mathbb{R}^{d-1}}(\theta(u_\eta)(\mathbf{b}\cdot\nu_\Omega)\varphi)(\cdot,0,\cdot)\,\dH{d-1}\nonumber \\
			&\quad - \int_0^T\int_{\mathbb{R}^d_+}\theta(u_\eta)\divergence{(\varphi\mathbf{b})}\,\dL{d}dt = \int_0^T\int_{\mathbb{R}^d_+}\theta'(u_\eta)\left(\int_0^\infty r_\eta(u,\bv{b})(\cdot,s)\mollHalf{\eta}(s-t)\,ds\right)\varphi\,\dL{d}dt.
		\end{align}
		Since $u_\eta(\cdot,0,\cdot)(\bv{b}\cdot\nu_\Omega)\to h(\bv{b}\cdot\nu_\Omega)$ in $L^1(\mathbb{R}^{d-1}\times(0,T))$ as $\eta\to 0$ and $\theta'\in L^\infty(\mathbb{R})$,
		we compute
		\begin{align*}
			&\left|\int_0^T\int_{\mathbb{R}^{d-1}}\theta(u_\eta)(\bv{b}\cdot\nu_\Omega)\varphi(\cdot,0,\cdot)\,\dH{d-1}dt - \int_0^T\int_{\mathbb{R}^{d-1}}\theta(h)(\bv{b}\cdot\nu_\Omega)\varphi(\cdot,0,\cdot)\,\dH{d-1}dt\right|	\\
			&{=\left|\int_0^T\int_{\mathbb{R}^{d-1}}\left(\int_0^1\theta'(su_\eta + (1-s)h)\,ds(u_\eta - h)\right)(\bv{b}\cdot\nu_\Omega)\varphi(\cdot,0,\cdot)\,\dH{d-1}dt\right|}\\
			&\leq \|\theta'\|_{L^\infty(\mathbb{R})}\sup_{K}|\varphi|\|u_\eta(\bv{b}\cdot\nu_\Omega) - h(\bv{b}\cdot\nu_\Omega)\|_{L^1(K)}\to 0 \quad\mbox{as}\quad\eta\to 0,
		\end{align*}
		where $K\subset\mathbb{R}^{d-1}\times(0,T)$ is the support of $\varphi$ which is compact.
		{Here, the convergence of the right-hand side is deduced from \Lemma{LMoll} and \Lemma{LMoll3} for the cases $\divergence{\bv{b}}\equiv 0$ and $\divergence{\bv{b}}\not\equiv 0$, respectively.}
		Likewise, we see that
		\begin{align*}
			&\left|\int_{\halfspace{d}}\theta(u_\eta)(\cdot,0)\varphi(\cdot,0)\,\dL{d} - \int_{\halfspace{d}}\theta(u_0)\varphi(\cdot,0)\,\dL{d}\right|\\
			&\leq {\left|\int_{\halfspace{d}}\left(\int_0^1\theta'(su_\eta + (1-s)u_0)\,ds(u_\eta - u_0)\right)\varphi(\cdot, 0)\,\dL{d}\right|}\\
			&\leq \|\theta'\|_{L^\infty(\mathbb{R})}\|\varphi(\cdot,0)\|_{L^\infty(\halfspace{d})}\|u_\eta(\cdot,0) - u_0\|_{L^1(\halfspace{d})}\to 0\qquad\mbox{as}\quad\eta\to 0,
		\end{align*}
		{where we have used the result for the initial data either in \Lemma{LMoll} or \Lemma{LMoll3} to obtain the last convergence.}
		Hence, letting $\eta\to 0$ in \eqref{eq:weak2} yields
		\begin{multline*}
			-\int_0^T\int_{\mathbb{R}^d_+}\theta(u)\varphi_t\,\dL{d}dt - \int_{\mathbb{R}^d_+}\theta(u_0)\varphi(\cdot,0)\,\dL{d} \\
			+ \int_0^T\int_{\mathbb{R}^{d-1}}\theta(h)(\mathbf{b}\cdot\nu_\Omega)\varphi\,\dH{d-1}dt
			-\int_0^T\int_{\mathbb{R}^d_+}\theta(u)\divergence{(\varphi\mathbf{b})}\,\dL{d}dt = 0.
		\end{multline*}
		This integral equation concludes the proof.
	\end{proof}
\section{Extension to non-divergence free vector fields}\label{sec:dvgf}
In this section, we extend the results of the previous sections to the case of more general vector fields $\bv{b}$.
Namely, we do not assume that $\divergence{\bv{b}} = 0$ and obtain the same results as stated in \Theorem{PRDir} and \Lemma{LIn}.
We begin with a generalization of the interchanging property of the commutator (\Lemma{lem:inchg}):

\begin{lemma}\label{lem:inchg2}
	Assume that $\bv{b}\in W^{1,{p'}}(\halfspace{d};\mathbb{R}^d)$.
	For every $u\in L^p(\halfspace{d})$ {and $v\in L^\infty(\halfspace{d})$}, it {follows} that	
	\begin{equation}\label{eq:inchg2}
		\int_{\halfspace{d}}\left(\commutaor{u}{\bv{b}} - \conv{u}{\mollDHalf{d}{\eta}}\divergence{\bv{b}}\right)v\,dx = \int_{\halfspace{d}}\left(\commutaor{v}{\bv{b}} - \conv{v}{\mollDHalf{d}{\eta}}\divergence{\bv{b}}\right)u\,dx.
	\end{equation}
\end{lemma}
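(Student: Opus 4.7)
My plan is to mimic the proof of \Lemma{lem:inchg} essentially verbatim, retaining the contribution from $\divergence\bv{b}$ that was previously annihilated by the solenoidality assumption. Starting from the distributional representation
\[\conv{(\bv{b}\cdot\nabla u)}{\mollDHalf{d}{\eta}}(x) = -\int_{\halfspace{d}} u(y)\,\divergence_y\!\bigl[\bv{b}(y)\mollDHalf{d}{\eta}(x-y)\bigr]\,dy,\]
I will apply the Leibniz rule to split the inner divergence into $\divergence\bv{b}(y)\mollDHalf{d}{\eta}(x-y) + \bv{b}(y)\cdot\nabla_y\mollDHalf{d}{\eta}(x-y)$. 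After multiplying by $v(x)$ and applying Fubini, the gradient piece is treated by exactly the sequence of equalities in \eqref{eq:inchg_1} and produces $-\int(\bv{b}\cdot\nabla\conv{v}{\mollDHalf{d}{\eta}})\,u\,dy$. The divergence piece, previously discarded, now contributes $-\int u\,\divergence\bv{b}\,\conv{v}{\mollDHalf{d}{\eta}}\,dy$ via the same Fubini-and-kernel-recognition step used in \Lemma{lem:inchg}. Hence
\[\int \conv{(\bv{b}\cdot\nabla u)}{\mollDHalf{d}{\eta}}\,v\,dx = -\int u\,\divergence\bv{b}\,\conv{v}{\mollDHalf{d}{\eta}}\,dy - \int (\bv{b}\cdot\nabla\conv{v}{\mollDHalf{d}{\eta}})\,u\,dy.\]

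Next, I will substitute this into $\int \commutaor{u}{\bv{b}}v\,dx = \int \conv{(\bv{b}\cdot\nabla u)}{\mollDHalf{d}{\eta}}v\,dx - \int (\bv{b}\cdot\nabla\conv{u}{\mollDHalf{d}{\eta}})v\,dx$, write down the analogous identity with $u$ and $v$ swapped, and subtract the two. The two convective contributions $\int(\bv{b}\cdot\nabla\conv{v}{\mollDHalf{d}{\eta}})u\,dy$ and $\int(\bv{b}\cdot\nabla\conv{u}{\mollDHalf{d}{\eta}})v\,dx$ cancel pairwise after renaming dummy variables, leaving
\[\int \commutaor{u}{\bv{b}}\,v\,dx - \int \commutaor{v}{\bv{b}}\,u\,dx = \int \conv{u}{\mollDHalf{d}{\eta}}\,\divergence\bv{b}\,v\,dx - \int \conv{v}{\mollDHalf{d}{\eta}}\,\divergence\bv{b}\,u\,dx,\]
which is exactly the claimed identity \eqref{eq:inchg2}.

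The only substantial obstacle is justifying that the surviving divergence piece assembles precisely into $-\int u\,\divergence\bv{b}\,\conv{v}{\mollDHalf{d}{\eta}}\,dy$ rather than some ``reversed kernel'' variant; this is the same identification that in \Lemma{lem:inchg} passes from $\int v(x)\mollDHalf{d}{\eta}(x-y)\,dx$ to $\conv{v}{\mollDHalf{d}{\eta}}(y)$, and it relies on the factorised form $\mollDHalf{d}{\eta}(x-y) = \mollD{d-1}{\eta}(x'-y')\mollHalf{\eta}(y_d-x_d)$. All the new terms are integrable thanks to $u\in L^p(\halfspace{d})$, $v\in L^\infty(\halfspace{d})$, the smoothness and boundedness of the mollified functions, and the $L^\infty$ bound on $\divergence\bv{b}$ available in the ambient setting where this lemma will be applied.
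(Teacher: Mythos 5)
Your proposal matches the paper's own proof essentially step for step: you split $\divergence_y[\bv{b}(y)\mollDHalf{d}{\eta}(x-y)]$ by the Leibniz rule, recycle the Fubini computation from \Lemma{lem:inchg} for the $\bv{b}\cdot\nabla_y$ piece, handle the new $\divergence\bv{b}$ piece by the same Fubini/kernel-recognition step, and then write the identity with $u$ and $v$ swapped and subtract so that the convective terms cancel. The one point you flag as a ``substantial obstacle'' — whether $\int v(x)\mollDHalf{d}{\eta}(x-y)\,dx$ may be identified with $\conv{v}{\mollDHalf{d}{\eta}}(y)$ — is handled by the paper with exactly the same appeal to the factorised form of the kernel, so your argument and the paper's share the same ingredients and the same delicate step.
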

\begin{rem}
	We note that the equation \eqref{eq:inchg2} is reduced to the interchanging property \eqref{eq:inchg} of the commutator for solenoidal vector fields $\bv{b}$.
\end{rem}
\begin{proof}[\textbf{Proof of \Lemma{eq:inchg2}}]
	{We invoke the distributional representation of $\nabla u$ and compute}
	\begin{multline*}
		\int_{\halfspace{d}}\conv{\left(\bv{b}\cdot\nabla u\right)}{\mollDHalf{d}{\eta}}v\,dx = -\int_{\halfspace{d}}\left(\int_{\halfspace{d}}u(y)\bv{b}(y)\cdot\nabla_y\mollDHalf{d}{\eta}(x-y)\,dy\right)v(x)\,dx\\
		-\int_{\halfspace{d}}\left(\int_{\halfspace{d}}(u\divergence{\bv{b}})(y)\mollDHalf{d}{\eta}(x-y)\,dy\right)v(x)\,dx.
	\end{multline*}
	We already know from the proof of \Lemma{lem:inchg} that the first term of the right-hand side can be computed as
	\begin{equation*}
		-\int_{\halfspace{d}}\left(\int_{\halfspace{d}}u(y)\bv{b}(y)\cdot\nabla_y\mollDHalf{d}{\eta}(x-y)\,dy\right)v(x)\,dx = -\int_{\halfspace{d}}\bv{b}\cdot\nabla\conv{v}{\mollDHalf{d}{\eta}}u\,dx.
	\end{equation*}
	Thus, we have
	\begin{equation}\label{eq:inchg3}
		\int_{\halfspace{d}}\conv{\left(\bv{b}\cdot\nabla u\right)}{\mollDHalf{d}{\eta}}v\,dx = -\int_{\halfspace{d}}\bv{b}\cdot\nabla\conv{v}{\mollDHalf{d}{\eta}}u\,dx - \int_{\halfspace{d}}u\divergence{\bv{b}}\conv{v}{\mollDHalf{d}{\eta}}\,dx.
	\end{equation}
	Likewise, we see that
	\begin{equation}\label{eq:inchg4}
		\int_{\halfspace{d}}\conv{\left(\bv{b}\cdot\nabla v\right)}{\mollDHalf{d}{\eta}}u\,dx = -\int_{\halfspace{d}}\bv{b}\cdot\nabla\conv{u}{\mollDHalf{d}{\eta}}v\,dx - \int_{\halfspace{d}}v\divergence{\bv{b}}\conv{u}{\mollDHalf{d}{\eta}}\,dx.
	\end{equation}
	We subtract \eqref{eq:inchg4} from \eqref{eq:inchg3} and obtain \eqref{eq:inchg2}.
\end{proof}
We shall prove the strong convergences of the trace $u_\eta$ on $\partial\halfspace{d}$ and $t = 0$ without assuming that $\bv{b}$ is solenoidal
in terms of \Lemma{lem:inchg2}:
\begin{lemma}\label{LMoll3}
	The consequence of \Lemma{LMoll} holds for every $\bv{b}\in W^{1,{p'}}(\halfspace{d};\mathbb{R}^d)$.
\end{lemma}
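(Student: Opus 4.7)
The plan is to repeat the proof of \Lemma{LMoll} verbatim, substituting \Lemma{lem:inchg2} for \Lemma{lem:inchg}. The hypothesis $\divergence{\bv{b}} = 0$ entered that proof in two distinct steps. First, in \eqref{eq:lem3_1} it was used to reduce $\divergence{(\varphi\bv{b})}$ to $\bv{b}\cdot\nabla\varphi$; second, to equate the commutator bulk integrals $R_u$ and $R_\varphi$ via the interchanging rule. Without the hypothesis, the first step contributes the extra bulk term
\[
	A_1 := \int_0^T\!\!\int_{\halfspace{d}} u_\eta\,\varphi\,\divergence{\bv{b}}\,\dL{d}dt,
\]
and \Lemma{lem:inchg2} produces a defect $A_2$ in place of $R_u = R_\varphi$, where $A_2$ is the bulk integral obtained by time-convolving (against $\mollHalf{\eta}(s-t)$) the difference $\conv{u(\cdot,s)}{\mollDHalf{d}{\eta}}\,\divergence{\bv{b}(\cdot,s)}\,\varphi(\cdot,t) - \conv{\varphi(\cdot,t)}{\mollDHalf{d}{\eta}}\,\divergence{\bv{b}(\cdot,s)}\,u(\cdot,s)$. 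In parallel, since we now substitute $\varphi = \Phi$ in \Definition{DDir} without dropping $\Phi\,\divergence{\bv{b}}$ from $\divergence{(\Phi\bv{b})}$, the master identity acquires a further bulk contribution $-\int u\,\Phi\,\divergence{\bv{b}}\,\dL{d}dt$.

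The key algebraic claim is the cancellation
\[
	A_1 - A_2 \;=\; \int_0^T\!\!\int_{\halfspace{d}} u\,\Phi\,\divergence{\bv{b}}\,\dL{d}dt.
\]
I would verify this by applying Fubini's theorem to each of the three bulk terms and rewriting them as a single integral of $u(y,s)$ against a space--time mollification of $\varphi\,\divergence{\bv{b}}$ (or analogous products); the three contributions then combine telescopically. The identity is forced by consistency with the solenoidal case, where each term vanishes individually. I expect this bookkeeping to be the main technical obstacle: since $\mollDHalf{d}{\eta}$ is one-sided in $x_d$ and $\mollHalf{\eta}$ is one-sided in $t$, one must carefully track the sign conventions $\mollDHalf{d}{\eta}(x-y)$ versus $\mollDHalf{d}{\eta}(y-x)$, $\mollHalf{\eta}(s-t)$ versus $\mollHalf{\eta}(t-s)$, and the time indices of $\varphi$ and $\bv{b}$ inside the commutators.

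After this cancellation, the remainder of the proof is identical to \Lemma{LMoll}. Taking $\varphi(x,t) = \widetilde{\varphi}(x',t)\psi_\delta(x_d)$ with $\psi_\delta\equiv 1$ for $x_d\in[0,\delta]$ and $\eta>0$ small enough that $\operatorname{supp}\mollHalf{\eta}\subset[0,\delta]$, the computation \eqref{eq:lem3_5} yields the boundary identity \eqref{eq:LMoll}; an analogous argument with a time-independent test function yields the initial identity \eqref{eq:LMollIni}. The strong convergences $u_\eta(\bv{b}\cdot\nu_\Omega)\to h(\bv{b}\cdot\nu_\Omega)$ in $L^1(\mathbb{R}^{d-1}\times(0,T))$ and $u_\eta(\cdot,0)\to u_0$ in $L^p(\halfspace{d})$ then follow from these trace formulas via standard $L^1$ and $L^p$ mollifier convergence.
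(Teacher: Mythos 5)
Your proposal is correct and is essentially the same strategy as the paper's proof, just packaged differently. The paper establishes \Lemma{LMoll3} by redoing the Fubini computation of $\int u_\eta\divergence{(\varphi\bv{b})}$ from scratch while keeping the extra $\varphi\divergence{\bv{b}}$ piece, and then applying \Lemma{lem:inchg2} to the resulting term $\int\conv{u}{\mollDHalf{d}{\eta}}\varphi\divergence{\bv{b}}$ so that the commutator emerges \emph{already} in the form $\int r_\eta(u,\bv{b})\varphi$ (see \eqref{eq:lem4_2_1}--\eqref{eq:lem4_2_2}); the remaining piece collects into $\int u\divergence{(\Phi\bv{b})}$, and the proof concludes as in \Lemma{LMoll} with no further appeal to an interchanging identity. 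Your reorganization instead runs the solenoidal proof unchanged and tracks the three divergence-dependent contributions that appear — $A_1$ from $\divergence{(\varphi\bv{b})}$, $A_2$ as the defect in \eqref{eq:inchg2}, and $\int u\Phi\divergence{\bv{b}}$ from $\divergence{(\Phi\bv{b})}$. Rearranging the paper's equality \eqref{eq:lem4_2_2} confirms exactly your claimed cancellation $A_1 - A_2 = \int u\,\Phi\,\divergence{\bv{b}}$, so the two viewpoints are algebraically equivalent; the paper's chain of equalities \emph{is} the verification you say you ``would'' carry out. One minor caveat: you leave the Fubini bookkeeping (and in particular the time-index placement of $\divergence{\bv{b}}$ inside $A_2$, since \Lemma{lem:inchg2} is a fixed-time spatial identity which must then be composed with the one-sided time convolution) as an unchecked step, whereas the paper writes it out; a complete write-up should include that verification, but there is no gap in the idea.
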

\begin{proof}
	The proof is almost the same as that of \Lemma{LMoll} except for the use of \Lemma{lem:inchg2} instead of \Lemma{lem:inchg}.
	First, we compute
	\begin{align}\label{eq:lem4_2_1}
		&\int_0^T\int_{\halfspace{d}}u_\eta\divergence{(\varphi\bv{b})}\,dxdt\nonumber\\
		&= \int_0^T\int_{\halfspace{d}}\left(\int_0^\infty\int_{\halfspace{d}}(\nabla\varphi\cdot\bv{b} + \varphi\divergence{\bv{b}})(x,t)\mollDHalf{d}{\eta}(y-x)\mollHalf{\eta}(t-s)\,dxdt\right)u(y,s)\,dyds\nonumber\\
		&= \int_0^T\int_{\halfspace{d}}\left(\int_0^\infty \left\{\convS{\left(\bv{b}\cdot\nabla\varphi\right)}{\mollDHalf{d}{\eta}} + \convS{\left(\varphi\divergence{\bv{b}}\right)}{\mollDHalf{d}{\eta}}\right\}(y,t)\mollHalf{\eta}(t-s)\,dt\right)u(y,s)\,dyds.
	\end{align}
	We have already computed the first term of the right-hand side in terms of the formula:
	\begin{equation*}
		\int_{\halfspace{d}}\conv{\left(\bv{b}\cdot\nabla\varphi\right)}{\mollDHalf{d}{\eta}}u\,dx = \int_{\halfspace{d}}\left(\bv{b}\cdot\nabla\conv{\varphi}{\mollDHalf{d}{\eta}} + \commutaor{\varphi}{\bv{b}}\right)u\,dx.
	\end{equation*}
	For the second term, we note that
	\begin{multline*}
		\int_{\halfspace{d}} \conv{\left(\varphi\divergence{\bv{b}}\right)}{\mollDHalf{d}{\eta}}u\,dy = \int_{\halfspace{d}}\left(\int_{\halfspace{d}}(\varphi\divergence{\bv{b}})(x)\mollDHalf{d}{\eta}(y-x)\,dx\right)u(y)\,dy\\
		= \int_{\halfspace{d}}\left(\int_{\halfspace{d}} u(y)\mollDHalf{d}{\eta}(x-y)\,dy\right)(\varphi\divergence{\bv{b}})(x)\,dx = \int_{\halfspace{d}}\conv{u}{\mollDHalf{d}{\eta}}\varphi\divergence{\bv{b}}\,dx.
	\end{multline*}
	Then, we apply \Lemma{lem:inchg2} for $u$ and $v := \varphi$ to obtain
	\begin{equation*}
		\int_{\halfspace{d}}\conv{u}{\mollDHalf{d}{\eta}}\varphi\divergence{\bv{b}}\,dx = - \int_{\halfspace{d}}\commutaor{\varphi}{\bv{b}}u\,dx + \int_{\halfspace{d}}\commutaor{u}{\bv{b}}\varphi\,dx + \int_{\halfspace{d}}\conv{\varphi}{\mollDHalf{d}{\eta}}u\divergence{\bv{b}}\,dx.
	\end{equation*}
	Hence, we see that
	\begin{multline}\label{eq:lem4_2_2}
		\int_0^T\int_{\halfspace{d}}u_\eta\divergence{(\varphi\bv{b})}\,dxdt\\
		= \int_0^T\int_{\halfspace{d}}\int_0^\infty\left(\commutaor{u}{\bv{b}}\varphi + \bv{b}\cdot\nabla\conv{\varphi}{\mollDHalf{d}{\eta}}u + \conv{\varphi}{\mollDHalf{d}{\eta}}u\divergence{\bv{b}}\right)\mollHalf{\eta}(s-t)\,ds\,dydt\\
		= \int_0^T\int_{\halfspace{d}}\left(\int_0^\infty\commutaor{u}{\bv{b}}\mollHalf{\eta}(s-t)\,ds\right)\varphi\,dxdt + \int_0^T\int_{\halfspace{d}}u\divergence{(\Phi\bv{b})}\,dxdt,
	\end{multline}
	where $\Phi$ is defined by \eqref{def:Phi}. We again choose $\varphi := \Phi$ in \Definition{DDir} and obtain
	\begin{multline}\label{eq:lem4_2_3}
		-\int_0^T\int_{\halfspace{d}}u\partial_t\Phi\,dxdt - \int_{\halfspace{d}}u_0\Phi(\cdot,0)\,dx\\
		+ \int_0^T\int_{\mathbb{R}^{d-1}} h(\bv{b}\cdot\nu_\Omega)\Phi\biggm|_{\mathbb{R}^{d-1}}\,dx'dt - \int_0^T\int_{\halfspace{d}}u\divergence{(\Phi\bv{b})}\,dxdt = 0.
	\end{multline}
	We combine \eqref{eq:lem4_2_3} with \eqref{eq:lem4_2_2} and obtain the same formula as \eqref{eq:lem3_4} with replacement of $(u,\varphi)$ by $(\varphi, u)$.
	Therefore, the remaining part of the proof is the same as that of \Lemma{LMoll}.
\end{proof}

\section{Convergence of commutators}\label{sec:comm}
In this section, we will prove {\Lemma{lem:comm} which asserts} the convergence of the commutator $r_\eta(u,\bv{b})\to 0$ as $\eta\to 0$.
For simplicity of the argument, we assume that $\Omega$ is the half space $\mathbb{R}^d_+$.
We {begin} with clarifying the meaning of the second term of $r_\eta(u,\bv{b})$. Indeed,
the gradient $\nabla u$ is not integrable if $u$ is merely in $L^p(\Omega\times(0,T))$.
Let us consider the case when $u$ is smooth in $\mathbb{R}^d_+\times(0,T)$.
Then, for $x = (x',x_d)\in\mathbb{R}^d_+$, the integration by parts gives
\begin{align*}
	&\conv{(\bv{b}\cdot\nabla u)}{\mollDHalf{d}{\eta}}(x',x_d) = \int_{\mathbb{R}^{d-1}}\int_0^\infty\,(\bv{b}\cdot\nabla u)(y',y_d)\mollD{d-1}{\eta}(x'-y')\mollHalf{\eta}(y_d-x_d)\,\dH{d-1}(y')dy_d\\
	&= \int_{\mathbb{R}^{d-1}}(u(\bv{b}\cdot\nu_\Omega))(y')\mollD{d-1}{\eta}(x'-y')\mollHalf{\eta}(-x_d)\,\dH{d-1}(y')\\
	&\quad - \int_{\mathbb{R}^{d-1}}\int_0^\infty u\,\divergence_y{\left(\bv{b}(y',y_d)\mollD{d-1}{\eta}(x'-y')\mollHalf{\eta}(y_d-x_d)\right)}\,\dH{d-1}(y')dy_d.
\end{align*}
Since $x_d\geq 0$, the first term of the right-hand side can be neglected due to $\mollHalf{\eta}(-x_d) = 0$.
Note that a similar argument will justify the neglect of the first term in the case of general domains $\Omega$.
Hence, we deduce that
\begin{equation*}
	\conv{(\bv{b}\cdot\nabla u)}{\mollDHalf{d}{\eta}}(x',x_d) = -\int_{\mathbb{R}^{d-1}}\int_0^\infty u\,\divergence_y{\left(\bv{b}(y',y_d)\mollDHalf{d}{\eta}(x - y)\right)}\,\dH{d-1}(y')dy_d.
\end{equation*}
We are now in the position to prove the convergence of the commutator $r_\eta(u,\bv{b})$.
\begin{proof}[\textbf{Proof of \Lemma{lem:comm}}]
	We follow the proof of \cite[Lemma 2.7]{GigaGiga2024}, although we need to modify the argument to fit our setting according to the change of mollification.
	For $x = (x',x_d)\in\mathbb{R}^d_+$, we have
	\begin{align*}
		&r_\eta(u,\bv{b}) = \conv{(\bv{b}\cdot\nabla u)}{\mollDHalf{d}{\eta}}(x',x_d) - \left(\bv{b}\cdot\nabla\conv{u}{\mollDHalf{d}{\eta}}\right)(x',x_d)\\
		&= -\int_{\mathbb{R}^{d-1}}\int_0^\infty u\,\divergence_y{\left(\bv{b}(y',y_d)\mollDHalf{d}{\eta}(x - y)\right)}\,\dH{d-1}(y')dy_d\\
		&\qquad -\bv{b}(x', x_d)\cdot\int_{\mathbb{R}^{d-1}}\int_0^\infty u(y',y_d)\nabla\mollDHalf{d}{\eta}(x - y)\,\dH{d-1}(y')dy_d\\
		&= \int_{\mathbb{R}^{d-1}}\int_0^\infty u(y',y_d)\{\bv{b}(y',y_d) - \bv{b}(x',x_d)\}\cdot\nabla\mollDHalf{d}{\eta}(x - y)\,\dH{d-1}(y') dy_d\\
		&\qquad - \int_{\mathbb{R}^{d-1}}\int_0^\infty (u\divergence{\bv{b}})(y',y_d)\mollDHalf{d}{\eta}(x - y)\,\dH{d-1}(y') dy_d.
	\end{align*}
	For short notation, we write the first term and the second term of the right-hand side of the above equality as $I(x)$ and $II(x)$, respectively.
	First, let us estimate the norm of $I$ in $L^\alpha(\mathbb{R}^d_+)$.
	Since $\operatorname{supp}\mollDHalf{d}{\eta}=\{(x',x_d)\in \closure{B(0,\eta)}\mid x_d \geq 0\}$, we deduce that
	\begin{align*}
		I(x) &= \int_{|x-y|\leq\eta}\{\bv{b}(y) - \bv{b}(x)\}\cdot\nabla\mollDHalf{d}{\eta}(x-y)u(y)\,\dL{d}(y)\\
		&= \int_{|x-y|\leq 1}\frac{\bv{b}(x + \eta z) - \bv{b}(x)}{\eta}\cdot\nabla\mollDHalf{d}{}(-z)u(x + \eta z)\,\dL{d}(z).
	\end{align*}
	Here, we have used the change of variables $z := (y - x) / \eta$.
	Setting $C_0 := \|\nabla\mollDHalf{d}{}\|_{L^\infty(\mathbb{R}^d_+)} < \infty$, we have
	\begin{equation*}
		|I(x)| \leq C_0\int_{|z|\leq 1}k_\eta(x,z)|u(x + \eta z)|\,\dL{d}(z),
	\end{equation*}
	where
	\begin{equation*}
		k_\eta(x,z) := \frac{|\bv{b}(x + \eta z) - \bv{b}(x)|}{\eta}.	
	\end{equation*}
	Thus, we obtain
	\begin{align}\label{eq:comm_1}
		\|I\|^\alpha_{L^\alpha(\mathbb{R}^d_+)}&\leq C^\alpha_0\int_{\mathbb{R}^d_+}\left\{\int_{|z|\leq 1}k_\eta(x,z)|u(x + \eta z)|\,\dL{d}(z)\right\}^\alpha\,\dL{d}(x)\nonumber\\
		&\leq C^\alpha_0 |B_1|^{\alpha-1}\int_{\mathbb{R}^d_+}\int_{|z|\leq 1}k_\eta(x,z)^\alpha|u(x + \eta z)|^\alpha\,\dL{d}(z)\,\dL{d}(x).
	\end{align}
	Here, we have invoked the Young inequality to derive the last inequality with $\alpha$ and $\alpha/(\alpha - 1)$, and
	$|B_1|$ denotes the $d$-dimensional Lebesgue measure of the unit ball $B(0,1)$.
	Let us write the right-hand side of the above inequality as $C^\alpha_0 J^\alpha$. Then, we deduce from the H\"{o}lder inequality
	for $k^\alpha_\eta(x,z)$ and $|w(x+\eta z)|^\alpha$ with $1/\alpha = 1/\beta + 1/p$ that
	\begin{align}\label{eq:comm_2}
		J &\leq |B_1|^{1-\frac{1}{\alpha}}\left\{\int_{\mathbb{R}^d_+}\int_{|z|\leq 1}k_\eta(x,z)^\beta\dL{d}(z)\dL{d}(x)\right\}^{\frac{1}{\beta}}\left\{\int_{\mathbb{R}^d_+}\int_{|z|\leq 1}|u(x+\eta z)|^p\,\dL{d}(z)\dL{d}(x)\right\}^{\frac{1}{p}}\nonumber\\
		&\leq |B_1|^{1-\frac{1}{\alpha}}\cdot\|u\|_{L^p(\mathbb{R}^d_+)}|B_1|^{\frac{1}{p}}\left\{\int_{\mathbb{R}^d_+}\int_{|z|\leq 1}k_\eta(x,z)^\beta\dL{d}(z)\dL{d}(x)\right\}^{\frac{1}{\beta}}\nonumber\\
		&= C_1\|w\|_{L^p(\mathbb{R}^d_+)}\left\{\int_{\mathbb{R}^d_+}\int_{|z|\leq 1}k_\eta(x,z)^\beta\dL{d}(z)\dL{d}(x)\right\}^{\frac{1}{\beta}},
	\end{align}
	where we set $C_1 := |B_1|^{1-1/\beta}$. Meanwhile, we deduce from the fundamental theorem of calculus that
	\begin{align*}
		|\bv{b}(x + \eta z) - \bv{b}(x)| &= \sqrt{\sum_{i=1}^d\left(\int_0^1\frac{d}{ds}b_i(x + \eta zs)\,ds\right)^2} = \sqrt{\sum_{i=1}^d\left(\int_0^1\nabla b_i(x + \eta zs)\cdot \eta z\,ds\right)^2}\\
		&\leq \eta|z|\int_0^1 \|\nabla\bv{b}(x + \eta zs)\|\,ds,
	\end{align*}
	where $b_i$ denotes the $i$-th element of $\bv{b}$.
	Hence, we see that
	\begin{align}\label{eq:comm_3}
		&\int_{\mathbb{R}^d_+}\int_{|z|\leq 1} k_\eta(x,z)^\beta\,\dL{d}(z)\,\dL{d}(x) \leq \int_{\mathbb{R}^d_+}\int_{|z|\leq 1}\left(\int_0^1\|\nabla\bv{b}(x + \eta zs)\|\,ds\right)^\beta|z|^\beta\,\dL{d}(z)\,\dL{d}(x)\nonumber\\
		&\leq \int_{\mathbb{R}^d_+}\int_{|z|\leq 1}\left(\int_0^1\|\nabla\bv{b}(x + \eta zs)\|^\beta\,ds\right)|z|^\beta\,\dL{d}(z)\,\dL{d}(x)\leq C^\beta_2\|\nabla\bv{b}\|^\beta_{L^\beta(\mathbb{R}^d_+)},
	\end{align}
	where we have set $C_2 := \left(\int_{|z|\leq 1}|z|^\beta\,\dL{d}(z)\right)^{\frac{1}{\beta}}$.
	Combining \eqref{eq:comm_1}, \eqref{eq:comm_2}, and \eqref{eq:comm_3}, we obtain
	\begin{equation}\label{eq:comm_4}
		\|I\|_{L^\alpha(\mathbb{R}^d_+)} \leq C_0 J \leq C_0C_1C_2\|u\|_{L^p(\mathbb{R}^d_+)}\|\nabla\bv{b}\|_{L^\beta(\mathbb{R}^d_+)}.
	\end{equation}
	It remains to estimate the norm of $II$ in $L^\alpha(\mathbb{R}^d_+)$, although the argument is straightforward from the H\"{o}lder inequality:
	\begin{equation}\label{eq:comm_5}
		\|II\|_{L^\alpha(\mathbb{R}^d_+)} \leq \|\mollDHalf{d}{\eta}\|_{L^1(\mathbb{R}^d_+)}\|u\,\divergence{\bv{b}}\|_{L^\alpha(\mathbb{R}^d_+)} =  \|u\,\divergence{\bv{b}}\|_{L^\alpha(\mathbb{R}^d_+)}\leq \|u\|_{L^p(\mathbb{R}^d_+)}\|\nabla\bv{b}\|_{L^\beta(\mathbb{R}^d_+)}.\\
	\end{equation}
	Here, the Young inequality has been invoked to derive the first inequality.
	Hence, \eqref{eq:comm} follows from \eqref{eq:comm_4} and \eqref{eq:comm_5} together with the Minkowski inequality.

	For the convergence of $r_\eta(u,\bv{b})$, we first consider the case when $u\in W^{1,p}(\mathbb{R}^d_+)$.
	In this case, both of $\convS{(\bv{b}\cdot\nabla u)}{\mollDHalf{d}{\eta}}$ and $\bv{b}\cdot\nabla\conv{u}{\mollDHalf{d}{\eta}}$ converge to $(\bv{b}\cdot\nabla)u$
	in $L^\alpha(\mathbb{R}^d_+)$ as $\eta\to 0$, and hence $r_\eta(u,\bv{b})\to 0$ in $L^\alpha(\mathbb{R}^d_+)$.
	Meanwhile, if $u$ is merely in $L^p(\mathbb{R}^d_+)$, we can approximate $u$ by a sequence $\{u_n\}_{n\in\mathbb{N}}$ of functions in $W^{1,p}(\mathbb{R}^d_+)$.
	Then, the convergence of $r_\eta(u,\bv{b})$ follows from the convergence of $r_\eta(u_n,\bv{b})$ for each $n$ and \eqref{eq:comm} as follows:
	\begin{align*}
		\|r_\eta(u,\bv{b})\|_{L^\alpha(\mathbb{R}^d_+)} &\leq \|r_\eta(u - u_n,\bv{b})\|_{L^\alpha(\mathbb{R}^d_+)} + \|r_\eta(u_n,\bv{b})\|_{L^\alpha(\mathbb{R}^d_+)}\\
		&\leq C\|u-u_n\|_{L^p(\mathbb{R}^d_+)}\|\nabla\bv{b}\|_{L^\beta(\mathbb{R}^d_+)} + \|r_\eta(u_n,\bv{b})\|_{L^\alpha(\mathbb{R}^d_+)}\to 0\qquad\mbox{as}\quad\eta\to 0.
	\end{align*}
	{The proof is now complete}.
\end{proof}
\begin{rem}
	In contrast to the proofs of \Lemma{lem:inchg} and \Lemma{LMoll},
	we have not used the assumption that $\bv{b}$ is divergence-free to prove the convergence of the commutator $r_\eta(u,\bv{b})$.
\end{rem}
\begin{rem}
	The proof of \Lemma{lem:comm} has been carried out without the time variable, and hence the convergence of the commutator $r_\eta(u,\bv{b})$ holds pointwise in time.
\end{rem}
\section{Existence and uniqueness of weak solution}\label{sec:exst}
In this section, we establish the existence and the uniqueness of a weak solution to the transport equation \eqref{ETr} with the Dirichlet data \eqref{EDir} and the initial data \eqref{EInit}.
Let us show the proof of \Theorem{PRDir} using the relabeling lemma (\Lemma{LIn}):
\begin{proof}[\textbf{Proof of \Theorem{PRDir}}]
Although we follow the argument in \cite[Lemma 2.5]{GigaGiga2024}, we need to take the boundary condition into account.
 We may assume that $u_0\equiv 0$, and $h=0$ on the place where $\bv{b}\cdot\nu_\Omega\leq 0$ on $\partial\Omega$.
 For each $M > 0$ and $\eta > 0$, we define functions $g_M(u)$ and $\theta_\eta(u)$ by
 \begin{align*}
	g_M(u) &:= (|u|\land M)^p,\\
	\theta_{\eta,M}(u) &:= (g_M * \rho_\eta)(u) - (g_M * \rho_\eta)(0)\qquad\mbox{for}\quad u\in\mathbb{R},
 \end{align*}
 where $a\land b := \min\{a,b\}$ for $a,\,b\in\mathbb{R}$.
 Here, the convolution used above means the standard mollified function.
 The function $\theta_{\eta,M}$ is clearly smooth. Moreover, its derivative is bounded in $\mathbb{R}$. Indeed, we compute
 \begin{equation*}
	\theta_{\eta,M}'(u) = \int_{\mathbb{R}}g_M'(u-y)\rho_\eta(y)\,dy = \int_{u-M}^{u+M}p|u-y|^{p-1}\rho_\eta(y)\,dy\leq pM^{p-1}.
 \end{equation*}
 Thus, we deduce from \Lemma{LIn} that $\theta_{\eta,M}(u)$ is a weak solution of \eqref{ETr} with the Dirichlet data $\theta_{\eta,M}(h)$
 and the initial data $\theta_{\eta,M}(u)(\cdot,0)\equiv 0$ since $\theta_{\eta,M}(0) = 0$.
 We take any $\psi(t)\in C^\infty_0(0,T)$ and then obtain
 \begin{equation}\label{eq:thm1_1}
	-\int_0^T\int_{\Omega}\theta_{\eta,M}(u)\psi_t\,\dL{d}dt + \int_0^T\int_{\partial\Omega}\theta_{\eta,M}(h)(\bv{b}\cdot\nu_\Omega)\psi\,\dH{d-1}dt -\int_0^T\int_\Omega \theta_{\eta,M}(u)\psi\divergence{\bv{b}}\,\dL{d}dt = 0.
 \end{equation}
 Here, we have invoked that $\psi(t)$ is constant in the spatial variable.
 We can deform \eqref{eq:thm1_1} as follows:
 \begin{equation*}
	\int_0^T\psi(t)\left\{\frac{d}{dt}\left(\int_\Omega\theta_{\eta,M}(u)\,\dL{d}\right) + \int_{\partial\Omega}\theta_{\eta,M}(h)(\bv{b}\cdot\nu_\Omega)\,\dH{d-1} - \int_\Omega\theta_{\eta,M}(u)\divergence{\bv{b}}\,\dL{d}\right\}\,dt = 0.
 \end{equation*}
 We note that the time derivative here should be understood in the sense of distribution.
 Since $\psi(t)$ is arbitrary, the fundamental lemma of the calculus of variations gives
 \begin{equation*}
	\frac{d}{dt}\int_{\Omega}\theta_{\eta,M}(u)(\cdot,t)\,\dL{d} + \int_{\partial\Omega}(\theta_{\eta,M}(h)(\bv{b}\cdot\nu_\Omega))(\cdot,t)\,\dH{d-1} - \int_\Omega\theta_{\eta,M}(u)\divergence{\bv{b}}\,\dL{d} = 0
 \end{equation*}
 for a.e. $t\in(0,T)$.
 Letting $\eta\to 0$ shows that
 \begin{equation*}
	\frac{d}{dt}\int_\Omega(|u(\cdot,t)|\land M)^p\,\dL{d} + \int_{\partial\Omega}(|h(\cdot,t)|\land M)^p(\bv{b}\cdot\nu_\Omega)\,\dH{d-1} - \int_\Omega({|u(\cdot,t)|}\land M)^p\divergence{\bv{b}}\,\dL{d} = 0
 \end{equation*}
 for a.e. $t\in(0,T)$.
 Since $h(\cdot,t)\equiv 0$ on the place where $\bv{b}\cdot\nu_\Omega< 0$, we have
 \begin{equation}\label{E:LIn1}
	\frac{d}{dt}\int_\Omega(|u(\cdot,t)|\land M)^p\,\dL{d} \leq \|\divergence{\bv{b}}\|_\infty\int_\Omega(|u(\cdot,t)|\land M)^p\,\dL{d}.
 \end{equation}
 We integrate \eqref{E:LIn1} over $(0,t)$ and obtain
 \begin{equation*}
	\int_\Omega(|u(\cdot,t)\land M|)^p\,\dL{d} \leq \int_\Omega(|u(\cdot, 0)|\land M)^p\,\dL{d} + \|\divergence{\bv{b}}\|_\infty \int_0^t\int_\Omega(|u(\cdot,s)|\land M)^p\,\dL{d}ds.
 \end{equation*}
 Thus, the Gronwall inequality yields
 \begin{equation*}
	\int_\Omega(|u(\cdot,t)|\land M)^p\,\dL{d} \leq \int_\Omega(|u(\cdot, 0)|\land M)^p\,\dL{d}e^{\|\divergence{\bv{b}}\|_\infty t}\qquad\mbox{for a.e.}\quad t\in(0,T).
 \end{equation*}
 Hence, the assumption that $u(\cdot,0) = u_0\equiv 0$ implies
 \begin{equation*}
	\int_\Omega(|u(\cdot,t)|\land M)^p\,\dL{d} = 0\qquad\mbox{for a.e.}\quad t\in(0,T).
 \end{equation*}
 Since $u\in L^\infty(0,T;L^p(\Omega))$, we deduce from the Lebesgue dominated convergence theorem that
 \begin{equation*}
	\int_\Omega|u(\cdot,t)|^p\,\dL{d} = 0\qquad\mbox{for a.e.}\quad t\in(0,T)
 \end{equation*}
 sending $M\to\infty$. This implies that $u\equiv 0$ a.e. in $\Omega\times(0,T)$.
\end{proof}
Subsequently, we demonstrate the existence of a weak solution to \eqref{ETr}, \eqref{EDir}, and \eqref{EInit}.
\begin{proof}[{\textbf{Proof of \Theorem{thm:exst_2}}}]
	For each $\eta > 0$, let $\bv{b}_\eta$, $h_\eta$ and $u_{0,\eta}$ be smooth approximations of $\bv{b}$, $h$ and $u_0$ in terms of the mollifiers.
	Precisely, we set
	\begin{equation*}
		\bv{b}_\eta(x,t) := \int_0^\infty\conv{\bv{b}}{\mollDHalf{d}{\eta}}(x,s)\mollHalf{\eta}(s-t)\,ds,\quad h_\eta := \convS{h}{\mollDTHalf{d}{\eta}},\quad\mbox{and}\quad u_{0,\eta} := \convS{u_0}{\mollDHalf{d}{\eta}}.
	\end{equation*}
	Then, there exists a classical solution $u_\eta$ to the following problem:
	\begin{equation}\label{eq:exst_3}
		\begin{cases}
			u_\eta + \bv{b}_\eta\cdot\nabla u_\eta = 0&\qquad\mbox{in}\quad\Omega\times(0,T),\\
			u_\eta = h_\eta&\qquad\mbox{on}\quad\partial\Omega\times(0,T),\\
			u_\eta(\cdot,0) = u_{0,\eta}&\qquad\mbox{in}\quad\Omega.
		\end{cases}
	\end{equation}
	For each $t\in(0,T)$, we compute
	\begin{equation}\label{eq:exst_4}
		\frac{d}{dt}\|u_\eta(\cdot,t)\|^p_p = p\int_\Omega |u_\eta(\cdot,t)|^{p-1}\operatorname{sign}(u_\eta)\partial_t u_\eta\,dx = -p \int_\Omega |u_\eta(\cdot,t)|^{p-1}\operatorname{sign}(u_\eta)\bv{b}_\eta\cdot\nabla u_\eta\,dx.
	\end{equation}
	Moreover, we implement the integration by parts to the right-hand side of the above equation to obtain
	\begin{align}\label{eq:exst_5}
		\begin{split}
			\int_\Omega |u_\eta(\cdot,t)|^{p-1}\operatorname{sign}(u_\eta(\cdot,t))\bv{b}_\eta\cdot\nabla u_\eta\,\dL{d} &= \int_{\partial\Omega}|h_\eta(\cdot,t)|^{p-1}\operatorname{sign}(h_\eta(\cdot,t))h_\eta(\cdot,t)(\bv{b}_\eta\cdot\nu_\Omega)\,\dH{d-1}\\
			&\quad - \int_\Omega u_\eta(\cdot,t)\operatorname{sign}(u_\eta(\cdot,t))|u_\eta(\cdot,t)|^{p-1}\divergence{\bv{b}_\eta}\,\dL{d}\\
			&\quad - \int_\Omega u_\eta(\cdot,t)\operatorname{sign}(u_\eta(\cdot,t))\nabla(|u_\eta(\cdot,t)|^{p-1})\cdot\bv{b}_\eta\,\dL{d}.
		\end{split}
	\end{align}
	For the third term on the right-hand side, we have
	\begin{multline}\label{eq:exst_6}
		\int_\Omega u_\eta(\cdot,t)\operatorname{sign}(u_\eta(\cdot,t))\nabla(|u_\eta(\cdot,t)|^{p-1})\cdot\bv{b}_\eta\,\dL{d}\\ = (p-1)\int_\Omega u_\eta(\cdot,t)\operatorname{sign}(u_\eta(\cdot,t))^2|u_\eta(\cdot,t)|^{p-2}\bv{b}_\eta\cdot\nabla u_\eta\,\dL{d}
		\\= (p-1)\int_\Omega |u_\eta(\cdot,t)|^{p-1}\operatorname{sign}(u_\eta(\cdot,t))\bv{b}_\eta\cdot\nabla u_\eta\,\dL{d}.
	\end{multline}
	Combining \eqref{eq:exst_5} and \eqref{eq:exst_6} gives
	\begin{multline*}
		p\int_\Omega |u_\eta(\cdot,t)|^{p-1}\operatorname{sign}(u_\eta(\cdot,t))\bv{b}_\eta\cdot\nabla u_\eta\,\dL{d} \\= \int_{\partial\Omega} |h_\eta(\cdot,t)|^p(\bv{b}_\eta\cdot\nu_\Omega)\,\dH{d-1} - \int_\Omega |u_\eta(\cdot, t)|^p\divergence{\bv{b}_\eta}\,\dL{d}.
	\end{multline*}
	We substitute the above equality into \eqref{eq:exst_4} to obtain
	\begin{equation*}
		\frac{d}{dt}\|u_\eta(\cdot, t)\|^p_p = \int_\Omega |u_\eta(\cdot, t)|^p\divergence{\bv{b}_\eta}\,\dL{d} - \int_{\partial\Omega} |h_\eta(\cdot, t)|^p(\bv{b}_\eta\cdot\nu_\Omega)\,\dH{d-1}.
	\end{equation*}
	Since $\divergence{\bv{b}}$ and $\bv{b}$ are essentially bounded, $\divergence{\bv{b}_\eta}$ and $\bv{b}_\eta$ are also essentially bounded uniformly in $\eta$.
	Moreover, $h_\eta$ converges to $h$ in $L^1(0,T;L^p(\partial\Omega))$, and hence $\|h_\eta(\cdot,t)\|_p^p$ is uniformly bounded in $\eta$.
	Hence, we deduce that
	\begin{equation}\label{eq:exst_7}
		\frac{d}{dt}\|u_\eta(\cdot,t)\|^p_p\leq M_1(\bv{b})\|u_\eta(\cdot,t)\|^p_p + M_2(\bv{b},h)
	\end{equation}
	for some positive constants $M_1(\bv{b})$ and $M_2(\bv{b},h)$ which are independent of $\eta$.
	Integrating \eqref{eq:exst_7} over $(0,t)$ gives
	\begin{equation*}
		\|u_\eta(\cdot, t)\|^p_p \leq \|u_\eta(\cdot,0)\|^p_p + M_1(\bv{b})\int_0^t\|u_\eta(\cdot, s)\|^p_p\,ds + M_2(\bv{b},h)t.
	\end{equation*}
	Since $\|u_\eta(\cdot,0)\|^p_p$ is also bounded by $u_\eta(\cdot,0)\to u_0$ in $L^p(\Omega)$, we deduce from the Gronwall inequality that
	\begin{equation*}
		\|u_\eta(\cdot, t)\|^p_p \leq M_2(\bv{b},h,u_0,T)e^{M_1(\bv{b})T}\qquad\mbox{for all}\quad t\in[0,T].
	\end{equation*}
	The above inequality implies that $\{u_\eta\}_{\eta > 0}$ is uniformly bounded in $L^p(0,T;L^p(\Omega))$, and hence we can extract
	a subsequence $\{u_{\eta_k}\}_{k\in\mathbb{N}}$ of $\{u_\eta\}_{\eta > 0}$ with $\eta_k\downarrow 0$ which converges to a function $\widetilde{u}$ weakly in $L^p(0,T;L^p(\Omega))$.
	Testing any $\varphi\in C^1_0(\closure{\Omega}\times[0,T))$, we observe that
	\begin{equation*}
		-\int_0^T\int_\Omega u_\eta\partial_t\varphi\,dxdt - \int_\Omega u_{0,\eta}\varphi(\cdot,0)\,dx + \int_0^T\int_{\partial\Omega} h_\eta(\bv{b}_\eta\cdot\nu_\Omega)\varphi\,dx'dt - \int_0^T\int_\Omega u_\eta\divergence{(\varphi\bv{b}_\eta)}\,dxdt = 0.
	\end{equation*}
	Sending $\eta\to 0$, up to a subsequence, we finally derive
	\begin{equation*}
		-\int_0^T\int_\Omega \widetilde{u}\partial_t\varphi\,dxdt - \int_\Omega u_0\varphi(\cdot,0)\,dx + \int_0^T\int_{\partial\Omega} h(\bv{b}\cdot\nu_\Omega)\varphi\,dx'dt - \int_0^T\int_\Omega \widetilde{u}\divergence{(\varphi\bv{b})}\,dxdt = 0.
	\end{equation*}
	The function $\widetilde{u}$ actually belongs to $L^\infty(0,T;L^p(\Omega))$ thanks to the lower semicontinuity of the weak convergence in $L^p(0,T;L^p(\Omega))$, say
	$\|\widetilde{u}(\cdot, t)\|_p\leq \liminf_{\eta\to 0}\|u_\eta\|_{L^p(\Omega\times(0,T))} < \infty$.
	Therefore, we conclude that $\widetilde{u}$ is the desired weak solution.
\end{proof}

{As a conclusion of this section, we shall show the existence of a weak solution as a corollary of} the relabeling lemma (\Lemma{LIn}) which
allows any weak solution to be renormalized by any smooth relabeling function with bounded derivative,
and we can invoke a compactness argument to obtain a weak convergent sequence of approximate solutions.

\begin{cor}\label{cor:exst_1}
	Assume that $\Omega$ is a smooth bounded domain in $\mathbb{R}^d$, and that
	$u_0\in L^\infty(\Omega)$, $h\in L^\infty(\Omega\times(0,T))$, {$\bv{b} \in L^1(0,T;W^{1,q}(\Omega;\mathbb{R}^d))$} and $\divergence{\bv{b}}\in L^\infty(\Omega\times(0,T))$.
	Then, there exists a weak solution $u\in L^\infty(0,T;L^p(\Omega))$ to \eqref{ETr}, \eqref{EDir} and \eqref{EInit}.
\end{cor}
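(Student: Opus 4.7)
\textbf{Proof plan for \Corollary{cor:exst_1}.} The plan is to argue in the spirit of the second approach sketched in \Section{SD}: approximate the data by mollification, solve the resulting smooth problem classically, use the boundedness of the data to obtain a uniform estimate, and pass to a weak limit.

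For each $\eta > 0$, I would define the smooth approximations $\bv{b}_\eta$, $h_\eta$, $u_{0,\eta}$ exactly as in the proof of \Theorem{thm:exst_2}. Since $\bv{b}_\eta$ is smooth in space and time and $\Omega$ is a smooth bounded domain, the method of characteristics applied to the flow of $\bv{b}_\eta$ yields a classical solution $u_\eta$ to
\begin{equation*}
    (u_\eta)_t + \bv{b}_\eta\cdot\nabla u_\eta = 0,\qquad u_\eta|_{\partial\Omega} = h_\eta,\qquad u_\eta(\cdot,0) = u_{0,\eta}.
\end{equation*}
Since $u_\eta$ inherits its value along each backward characteristic from either $u_{0,\eta}$ (if the characteristic reaches $\{t=0\}$) or $h_\eta$ (if it reaches $\partial\Omega$), the bounded-data hypothesis yields the maximum-principle bound
\begin{equation*}
    \|u_\eta\|_{L^\infty(\Omega\times(0,T))} \leq \max\bigl(\|u_{0,\eta}\|_\infty,\|h_\eta\|_\infty\bigr) \leq \max\bigl(\|u_0\|_\infty,\|h\|_\infty\bigr),
\end{equation*}
uniformly in $\eta$. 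In particular $\{u_\eta\}_\eta$ is bounded in $L^\infty(0,T;L^p(\Omega))$ since $\Omega$ is bounded. Equivalently, the same conclusion follows from the relabeling lemma (\Lemma{LIn}) applied to $u_\eta$ with a smooth cut-off $\theta_M\in C^1(\mathbb{R})$ satisfying $\theta_M(s)=s$ for $|s|\leq M$, $|\theta_M|\leq M$ and $\|\theta_M'\|_\infty\leq 1$, upon choosing $M$ larger than $\max(\|u_0\|_\infty,\|h\|_\infty)$.

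Next I extract a subsequence $u_{\eta_k}\overset{\ast}{\rightharpoonup}\widetilde u$ weakly$^*$ in $L^\infty(\Omega\times(0,T))$, so that $\widetilde u\in L^\infty(0,T;L^p(\Omega))$, and pass to the limit term by term in the weak formulation of $u_{\eta_k}$ against an arbitrary test function $\varphi\in C^1_0(\closure{\Omega}\times[0,T))$. The $\partial_t\varphi$ and $\divergence{(\varphi\bv{b}_{\eta_k})}$ integrals converge by pairing the weak$^*$ convergence of $u_{\eta_k}$ against the strong convergences $\bv{b}_{\eta_k}\to\bv{b}$ in $L^1(0,T;L^{p'}(\Omega))$ and $\divergence{\bv{b}_{\eta_k}}\to\divergence{\bv{b}}$ in $L^1(\Omega\times(0,T))$, both of which follow from standard mollifier properties together with $\divergence{\bv{b}}\in L^\infty$. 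The initial term converges because $u_{0,\eta_k}\to u_0$ in $L^p(\Omega)$.

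The main obstacle I anticipate is the boundary integral $\int_0^T\!\int_{\partial\Omega}h_{\eta_k}(\bv{b}_{\eta_k}\cdot\nu_\Omega)\varphi\,\dH{d-1}dt$, because the tailored mollifier $\mollDHalf{d}{\eta}$ of \Section{sec:approx} is biased into the interior, so that convergence of the traces $h_{\eta_k}|_{\partial\Omega}$ and $\bv{b}_{\eta_k}|_{\partial\Omega}$ is not automatic. I would address this with the normal-coordinate framework of \Lemma{LMoll2}: within the tubular neighborhood $\Omega_{2\delta}$ defined by $\chgval$, the tailored mollification factorizes as a tangential standard mollifier times the half-line mollifier, so that the $\partial\Omega$-trace of $h_{\eta_k}$ coincides with the tangential standard mollification of $h$ and converges to $h$ strongly in $L^p(\partial\Omega\times(0,T))$. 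Combined with the trace convergence $\bv{b}_{\eta_k}\cdot\nu_\Omega\to\bv{b}\cdot\nu_\Omega$ in $L^{p'}(\partial\Omega\times(0,T))$, obtained from the continuity of the trace $W^{1,p'}(\Omega)\to L^{p'}(\partial\Omega)$ and the tangential mollifier commuting with it, this gives $h_{\eta_k}(\bv{b}_{\eta_k}\cdot\nu_\Omega)\to h(\bv{b}\cdot\nu_\Omega)$ in $L^1(\partial\Omega\times(0,T))$. With all four terms passing to the limit, $\widetilde u$ satisfies the identity of \Definition{DDir} and is therefore the desired weak solution in $L^\infty(0,T;L^p(\Omega))$.
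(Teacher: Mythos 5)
Your proposal is correct, but it departs from the paper's argument in the compactness step, and in a way that is arguably simpler.

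The paper's proof does \emph{not} use a maximum principle for the smoothed problem. Instead it picks a fixed, strictly increasing, \emph{bounded} relabeling function $\theta$ (e.g.\ $\theta=\tanh$), observes that $\theta(u_\eta)$ is then bounded by $\|\theta\|_\infty$ and hence uniformly bounded in $L^p$ on the bounded domain $\Omega$, extracts a weak $L^p$ limit $\widetilde u$, passes to the limit in the weak formulation to conclude that $\widetilde u$ solves the problem with data $\theta(h)$ and $\theta(u_0)$, and then \emph{undoes} the relabeling by constructing the globally Lipschitz extension $\widetilde\theta$ of $\theta^{-1}$ and applying \Lemma{LIn} once more with $\widetilde\theta$ to recover a solution with the original data. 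Your primary route short-circuits all of this: since $u_\eta$ is a classical solution with smooth velocity and $L^\infty$ data, the value of $u_\eta$ is constant along backward characteristics, each of which terminates either on $\{t=0\}$ or on the inflow part of $\partial\Omega$, giving the direct $L^\infty$ bound $\|u_\eta\|_\infty\le\max(\|u_0\|_\infty,\|h\|_\infty)$ uniformly in $\eta$. That bound makes the subsequent weak-$\ast$ extraction and limit passage entirely standard and entirely avoids the $\widetilde\theta$ construction. Your secondary route with a truncation-type $\theta_M$ that is the \emph{identity} on $[-M,M]$ is also a genuine simplification of the paper's argument: since the mollified data satisfy $|h_\eta|,|u_{0,\eta}|\le M$, we have $\theta_M(h_\eta)=h_\eta$ and $\theta_M(u_{0,\eta})=u_{0,\eta}$, so the relabeled weak solution carries the original data, again obviating the inverse step. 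The trade-off is that the paper's route does not lean on the characteristic structure of the smoothed transport problem, so it is more robust to generalizations where a comparison principle is not directly available; for the corollary as stated, however, your approach is sound and shorter.

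One remark: the phrase ``Equivalently, the same conclusion follows from the relabeling lemma'' overstates the relation slightly. The two routes are logically independent ways of obtaining a uniform bound on something that converges weakly to a weak solution with data $h,u_0$; they are alternatives rather than restatements of one another. And the second route should be read as bounding $\theta_M(u_\eta)$, not $u_\eta$ itself; the limit of $\theta_M(u_\eta)$ is what you identify as the weak solution, without ever needing to know whether $u_\eta$ itself is bounded. Your handling of the boundary term via tangential mollification on $\partial\Omega$ and the trace continuity of $W^{1,p'}(\Omega)\to L^{p'}(\partial\Omega)$ matches the estimate the paper uses; this part is in agreement with the paper.
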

\begin{proof}
	For each $\eta > 0$, let $\bv{b}_\eta$, $h_\eta$ and $u_{0,\eta}$ be smooth approximations of $\bv{b}$, $h$ and $u_0$ in terms of the mollifiers.
	Precisely, we set
	\begin{equation*}
		\bv{b}_\eta(x,t) := \int_0^\infty\conv{\bv{b}}{\mollDHalf{d}{\eta}}(x,s)\mollHalf{\eta}(s-t)\,ds,\quad h_\eta := \convS{h}{\mollDTHalf{d}{\eta}},\quad\mbox{and}\quad u_{0,\eta} := \convS{u_0}{\mollDHalf{d}{\eta}}.
	\end{equation*}
	Then, there exists a classical solution $u_\eta$ to the following problem:
	\begin{equation*}
		\begin{cases}
			u_\eta + \bv{b}_\eta\cdot\nabla u_\eta = 0&\qquad\mbox{in}\quad\Omega\times(0,T),\\
			u_\eta = h_\eta&\qquad\mbox{on}\quad\partial\Omega\times(0,T),\\
			u_\eta(\cdot,0) = u_{0,\eta}&\qquad\mbox{in}\quad\Omega.
		\end{cases}
	\end{equation*}
	We now take a relabeling function $\theta\in L^\infty(\mathbb{R})\cap C^1(\mathbb{R})$ with $\theta'\in L^\infty(\mathbb{R})$ which is
	strictly increasing and satisfies $\theta(0) = 0$. For instance, we may take $\theta(\sigma) := \tanh{\sigma}$ for $\sigma\in\mathbb{R}$.
	Then, it is easily observed that $\theta(u_\eta)$ is the classical solution to the following problem:
	\begin{equation*}
		\begin{cases}
			(\theta(u_\eta))_t + \bv{b}_\eta\cdot\nabla\theta(u_\eta) = 0&\qquad\mbox{in}\quad\Omega\times(0,T),\\
			\theta(u_\eta) = \theta(h_\eta)&\qquad\mbox{on}\quad\partial\Omega\times(0,T),\\
			\theta(u_\eta)(\cdot,0) = \theta(u_{0,\eta})&\qquad\mbox{in}\quad\Omega.
		\end{cases}
	\end{equation*}
	Since $\theta(u_\eta)$ is a weak solution to the above problem, for every $\varphi\in C^1_0(\closure{\Omega}\times[0,T))$, {we have}
	\begin{multline}\label{eq:exst_1}
		-\int_0^T\int_\Omega \theta(u_\eta)\partial_t\varphi\,dxdt - \int_\Omega\theta(u_{0,\eta})\varphi(\cdot,0)\,dx\\ + \int_0^T\int_{\partial\Omega}\theta(h_\eta)(\bv{b}_\eta\cdot\nu_\Omega)\varphi\,d\mathcal{H}^{d-1}dt - \int_0^T\int_\Omega \theta(u_\eta)\divergence{(\varphi\bv{b}_\eta)}\,dxdt = 0.
	\end{multline}
	Since $\Omega$ and $\theta$ are bounded, $\theta(u_\eta)$ is also bounded in $L^p(0,T;L^p(\Omega))$ uniformly in $\eta$,
	and hence we can extract a subsequence $\{\theta(u_{\eta_k})\}_{k\in\mathbb{N}}$ which converges to a function $\widetilde{u}$ weakly in $L^p(0,T;L^p(\Omega))$.
	Up to a subsequence, letting $\eta\to 0$ gives
	\begin{multline}\label{eq:exst_2}
		-\int_0^T\int_\Omega\widetilde{u}\partial_t\varphi\,dxdt - \int_{\Omega}\theta(u_0)\varphi(\cdot,0)\,dx\\ + \int_0^T\int_{\partial\Omega}\theta(h)(\bv{b}\cdot\nu_\Omega)\varphi\,d\mathcal{H}^{d-1}dt - \int_0^T\int_\Omega\widetilde{u}\divergence{(\varphi\bv{b})}\,dxdt = 0.
	\end{multline}
	Here, the convergence of the second term is deduced from the Lebesgue dominated convergence theorem together with
	\begin{equation*}
		\theta(u_{0,\eta})\varphi(\cdot,0) \leq \|\theta'\|_{L^\infty(\mathbb{R})}\sup_{\closure{\Omega}}|\varphi|
	\end{equation*}
	and $\theta(u_{0,\eta})\to \theta(u_0)$ a.e. in $\Omega$ as $\eta\to 0$ since $u_{0,\eta} \to u_0$ a.e. in $\Omega$ and $\theta$ is continuous.
	For the third term, we compute
	\begin{align*}
		&\left|\int_0^T\int_{\partial\Omega}\theta(h_\eta)(\bv{b}_\eta\cdot\nu_\Omega)\varphi\,\dH{d-1}dt - \int_0^T\int_{\partial\Omega}\theta(h)(\bv{b}\cdot\nu_\Omega)\varphi\,\dH{d-1}dt\right|\\
		&\leq \left|\int_0^T\int_{\partial\Omega}\theta(h_\eta)(\bv{b}_\eta\cdot\nu_\Omega)\varphi\,\dH{d-1}dt - \int_0^T\int_{\partial\Omega}\theta(h)(\bv{b}_\eta\cdot\nu_\Omega)\varphi\,\dH{d-1}dt\right|\\
		&\quad + \left|\int_0^T\int_{\partial\Omega}\theta(h)(\bv{b}_\eta\cdot\nu_\Omega)\varphi\,\dH{d-1}dt - \int_0^T\int_{\partial\Omega}\theta(h)(\bv{b}\cdot\nu_\Omega)\varphi\,\dH{d-1}dt\right|\\
		&= \left|\int_0^T\int_{\partial\Omega}{\left(\int_0^1\theta'(sh_\eta - (1-s)h)\,ds(h_\eta - h)\right)}(\bv{b}_\eta\cdot\nu_\Omega)\varphi\,\dH{d-1}dt\right| \\
		&\quad + \left|\int_0^T\int_{\partial\Omega}\theta(h)(\bv{b}_\eta\cdot\nu_\Omega - \bv{b}\cdot\nu_\Omega)\varphi\,\dH{d-1}dt\right|\\
		&\leq \|\theta'\varphi\|_\infty\|h_\eta - h\|_p\|\bv{b}_\eta\cdot\nu_\Omega\|_{p'} + \|\theta\varphi\|_\infty\|\bv{b}_\eta\cdot\nu_\Omega - \bv{b}\cdot\nu_\Omega\|_{p'}\mH{d-1}(\partial\Omega)^{\frac{1}{p}}\to 0\quad\mbox{as}\quad\eta\to 0.
	\end{align*}
	Here, we have invoked the H\"{o}lder inequality to derive the last inequality.
	Since $\theta(u_\eta)\rightharpoonup\widetilde{u}$ weakly in $L^\infty(0,T;L^p(\Omega))$,
	$\bv{b}_\eta\to \bv{b}$ and $\divergence{\bv{b}_\eta}\to \divergence{\bv{b}}$ strongly in $L^{p'}(\Omega\times(0,T))$ as $\eta\to 0$,
	the product of them weakly converges to the corresponding product, and hence the convergence of the fourth term follows.

	The integral equation \eqref{eq:exst_2} implies that $\widetilde{u}$ is a weak solution to \eqref{ETr}, \eqref{EDir} and \eqref{EInit}
	with the Dirichlet data $\theta(h)$ and the initial data $\theta(u_0)$.

	Letting $C := \max\{\|u_0\|_\infty, \|h\|_\infty\}$, we now define another relabeling function $\widetilde{\theta}$ by
	\begin{equation*}
		\widetilde{\theta}(\sigma) := \begin{cases}
			\theta^{-1}(\sigma)\qquad&\mbox{if}\quad\sigma\in[\theta(-C),\theta(C)],\\
			\frac{1}{\theta'(C)}(\sigma - \theta(C)) + C\qquad&\mbox{if}\quad\sigma\in(\theta(C),\infty),\\
			\frac{1}{\theta'(-C)}(\sigma - \theta(-C)) -C\qquad&\mbox{if}\quad\sigma\in(-\infty,\theta(-C)).
		\end{cases}
	\end{equation*}
	Roughly speaking, the function $\widetilde{\theta}$ corresponds to the inverse function of $\theta$ in the interval $(-\theta(C),\theta(C))$.
	Meanwhile, it is linearly extended to the outside of the interval (see \Figure{fig:theta}).

    \begin{figure}[H]
        \centering
		\includegraphics[keepaspectratio,width=80mm]{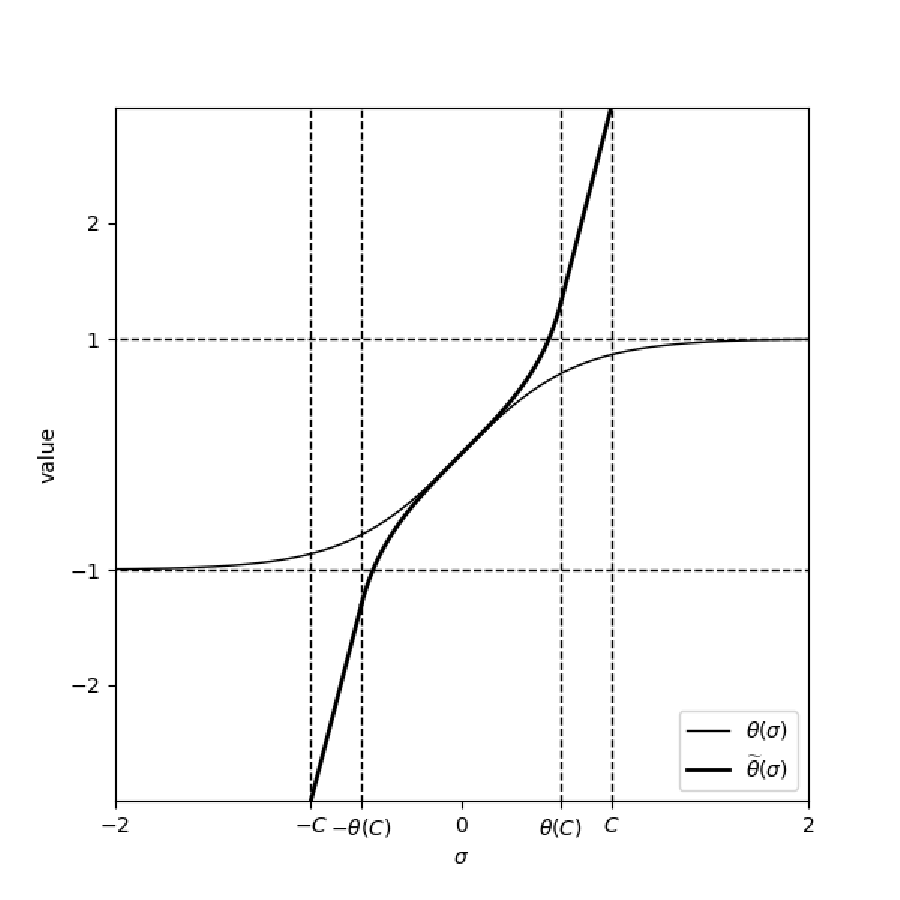}
		\caption{The graphs of $\theta$ and $\widetilde{\theta}$ for $\theta(\sigma) = \tanh{\sigma}$.}
		\label{fig:theta}
    \end{figure}

	Let us show that $\widetilde{\theta}(\widetilde{u})$ is a weak solution to \eqref{ETr}, \eqref{EDir} and \eqref{EInit}.
	Indeed, we easily observe that $\widetilde{\theta}\in C^1(\mathbb{R})$ and $\widetilde{\theta}'\in L^\infty(\mathbb{R})$,
	and hence $\widetilde{\theta}$ is available as a relabeling function for \Lemma{LIn}.
	Moreover, we note that $\widetilde{\theta}(\widetilde{u})\in L^\infty(0,T;L^p(\Omega))$ since
	\begin{equation*}
		\|\widetilde{\theta}(\widetilde{u})\|_p\leq \|\widetilde{\theta}'\|_\infty\|\widetilde{u}\|_p,\quad\mbox{and}\quad \|\widetilde{u}\|_p\leq \liminf_{\eta\to 0}\|\theta(u_\eta)\|_p < \infty.
	\end{equation*}
	Here, we have used the lower semicontinuity of the norm in $L^p(\Omega)$ with respect to the weak convergence $\theta(u_\eta)\rightharpoonup\widetilde{u}$.
	Therefore, we may adopt \Lemma{LIn} with $\theta = \widetilde{\theta}$ together with \eqref{eq:exst_2} to conclude that
	$\widetilde{\theta}(\widetilde{u})$ is the desired weak solution.
\end{proof}
\section{Conclusion}
In this study, we have established a notion of a weak solution to the time-dependent transport equation
with inhomogeneous Dirichlet boundary conditions. For each Dirichlet boundary data (irrelevant on the inflow place) and the initial data,
this notion ensures the uniqueness of a weak solution to the problem
provided that the divergence of the transport vector field is essentially bounded.
The uniqueness of the weak solution is guaranteed by the relabeling lemma,
and this lemma has been shown by approximation of the weak solution in terms of the mollification tailored to the {inhomogeneous} boundary problems.
Using the interchanging properties of the commutator, we have obtained the explicit forms of the Dirichlet data and the initial data of the approximate solutions.
For bounded smooth domains, bounded Dirichlet data, and bounded initial data,
the existence result is deduced using a renormalizing process which was originally developed to show the uniqueness of the weak solution.

\section{Acknowledgments}
The work of the second author was partly supported by the Japan Society 
for the Promotion of Science (JSPS) through the grant Kakenhi: 
No.~24K00531, and by Arithmer Inc., Daikin Industries, Ltd.\ 
and Ebara Corporation through collaborative grants.
\bibliographystyle{siam}
\bibliography{cite.bib}
\end{document}